\journal{Journal of Approximation Theory}
\DeclareMathOperator{\sinc}{sinc}
\DeclareMathOperator{\sgn}{sgn}
\DeclareMathOperator{\dist}{dist}
\DeclareMathOperator{\Lip}{Lip}
\DeclareMathOperator{\lip}{lip}
\newcommand{\abs}[1]{\left|#1\right|}
\newcommand{\zit}[1]{\textup{(\ref{#1})}}
\newcommand{\rez}[1]{\frac{1}{{#1}}}
\newcommand{\bgl}[1]{\begin{equation}\label{#1}}
\newcommand{\egl}{\end{equation}}
\newcommand{\ri}{\right}
\newcommand{\li}{\left}
\newcommand{\D}{\displaystyle}
\newcommand{\hal}{{\textstyle \frac{1}{2}}}
\newcommand{\ol}[1]{\overline{#1}}
\newcommand{\N}{\mathbb{N}}
\newcommand{\Z}{\mathbb{Z}}
\newcommand{\eps}{\varepsilon}
\newcommand{\R}{\mathbb{R}}
\newcommand{\C}{\mathbb{C}}
\newcommand*\wh[1]{\widehat{#1}}
\newcommand{\un}{\ensuremath{\infty}}
\newcommand*{\Rhf}[1]{R_{\pi/h}^{\hbox{\rm\tiny{#1}}}f}
\newcommand*{\Rh}{R_{\pi/h}^{\hbox{\rm\tiny{PDF}}}}
\newcommand{\bin}[2]{\Big(\genfrac{}{}{0pt}{}{#1}{#2}\Big)}
\newcommand{\p}[1]{\ensuremath{( #1 )}}
\newcommand{\pbig}[1]{\ensuremath{\bigl( #1 \bigr)}}
\newcommand{\pBig}[1]{\ensuremath{\Bigl( #1 \Bigr)}}
\newcommand{\pbigg}[1]{\ensuremath{\biggl( #1 \biggr)}}
\newcommand{\bk}[1]{\ensuremath{[ #1 ]}}
\newcommand{\bkbig}[1]{\ensuremath{\bigl[ #1 \bigr]}}\let\bigbk\bkbig
\newcommand{\br}[1]{\ensuremath{\{ #1 \}}}
\newcommand{\brbig}[1]{\ensuremath{\bigl\{ #1 \bigr\}}}
\newcommand{\brBig}[1]{\ensuremath{\Bigl\{ #1 \Bigr\}}}
\newcommand{\brbigg}[1]{\ensuremath{\biggl\{ #1 \biggr\}}}\let\biggbr\brbigg
\newcommand{\brBigg}[1]{\ensuremath{\Biggl\{ #1 \Biggr\}}}
\newcommand{\absbig}[1]{\ensuremath{\bigl| #1 \bigr|}} \let\bigabs\absbig
 \let\Bigabs\absBig
\newcommand{\norm}[1]{\ensuremath{\lVert #1 \rVert}}
\newcommand{\bignorm}[1]{\big\|#1\big\|}
\newcommand{\Bignorm}[1]{\Big\|#1\Big\|}
\newcommand{\biggnorm}[1]{\bigg\|#1\bigg\|}
\newcommand{\Biggnorm}[1]{\Bigg\|#1\Bigg\|}
\newcommand{\Oh}{\mathcal O}
\newcommand{\oh}{o}
\newcommand{\ds}{\displaystyle}
\newcommand{\Lone}{\ensuremath{L^1(\R)}}
\newcommand{\Ltwo}{\ensuremath{L^2(\R)}}
\newcommand{\Lq}{\ensuremath{L^q(\R)}}
\newcommand{\Wrtwo}{\ensuremath{W^{r,2}(\R)}}
\newcommand{\Wstwo}{\ensuremath{W^{s,2}(\R)}}
\newcommand{\Rieszalpha}{\ensuremath{H^\alpha_2}}
\newcommand{\Rieszbeta}{\ensuremath{H^\beta_2}}
\newcommand{\Rieszgamma}{\ensuremath{H^\gamma_2}}
\newcommand{\Rieszr}{\ensuremath{H^r_2}}
\newcommand{\Rieszhal}{\ensuremath{H^{1/2}_2}}
\newcommand{\Fstwo}{\ensuremath{F^{s,2}}}
\newcommand{\dt}{\,dt}
\newcommand{\du}{\,du}
\newcommand{\dv}{\,dv}
\newcommand{\Rae}{R^{\{\alpha\}}_{2j,\eps}\kern-1pt}
\newcommand{\Mn}{M^{2,1}_\text{\rm a}}
\newcommand{\Mtwo}{M^{2,1}}
\newcommand{\Nh}{\mathcal{N}_h}
\newcommand{\CN}{\mathcal{N}}
\newcommand*{\sm}[1]{\biggbr{\rez{h}\int_{n/h}^{(n+1)/h}\bigabs{\wh{f}_{#1}(v)}^2\,dv}^{1/2}}
\newcounter{zaehler}
\def\neuerlabel{(\roman{zaehler})\hfil}
\newdimen\azlabelsep
\newdimen\aztopsep
\newdimen\azitemsep
\newenvironment{aufzaehlung}[1]%
{\begin{list}{}%
{\usecounter{zaehler}%
\settowidth{\labelwidth}{#1}
\leftmargin\labelwidth
\labelsep\azlabelsep
\addtolength{\leftmargin}{\labelsep}
\topsep\aztopsep
\itemsep\azitemsep
\renewcommand{\makelabel}[1]{\ifx##1\empty\neuerlabel\else ##1\fi}}}%
{\end{list}}
\newcommand{\ie}{i.\,e.\xspace}
\newcommand{\eg}{e.\,g.\xspace}
\theoremstyle{plain}% default
\newtheorem{theorem}{Theorem}[section]
\newtheorem{lemma}[theorem]{Lemma}
\newtheorem{proposition}[theorem]{Proposition}
\newtheorem{corollary}[theorem]{Corollary}
\theoremstyle{definition}
\newtheorem{definition}[theorem]{Definition}
\theoremstyle{remark}
\newtheorem*{remark}{Remark}
\begin{document}

\begin{frontmatter}

%% Title, authors and addresses

%% use the tnoteref command within \title for footnotes;
%% use the tnotetext command for theassociated footnote;
%% use the fnref command within \author or \address for footnotes;
%% use the fntext command for theassociated footnote;
%% use the corref command within \author for corresponding author footnotes;
%% use the cortext command for theassociated footnote;
%% use the ead command for the email address,
%% and the form \ead[url] for the home page:
%% \title{Title\tnoteref{label1}}
%% \tnotetext[label1]{}
%% \author{Name\corref{cor1}\fnref{label2}}
%% \ead{email address}
%% \ead[url]{home page}
%% \fntext[label2]{}
%% \cortext[cor1]{}
%% \address{Address\fnref{label3}}
%% \fntext[label3]{}

\title{Sobolev Spaces of Fractional Order, Lipschitz Spaces, Readapted Modulation Spaces and Their Interrelations; Applications\\[2ex]
\small\textit{Dedicated to Jacob Lionel Bakst Cooper and Hans Georg Feichtinger,\\ the grandfather and father of modulation spaces}}

\author[author1]{Paul L.~Butzer}
\ead{butzer@rwth-aachen.de}
%\address{Lehrstuhl A für Mathematik, RWTH Aachen,
%		52056 Aachen, Germany}

%

\author[author2]{Gerhard~Schmeisser}
\ead{schmeisser@mi.uni-erlangen.de}

\author[author1]{Rudolf L.~Stens\corref{cor1}}
\ead{stens@mathA.rwth-aachen.de}	
 \cortext[cor1]{Corresponding author}

\address[author1]{Lehrstuhl A für Mathematik, RWTH Aachen,
		52056 Aachen, Germany}

\address[author2]{Department of Mathematics, University of Erlangen-Nuremberg,\\
		91058 Erlangen, Germany}		

\begin{abstract}
The purpose of this investigation is to extend  basic equations and inequalities which hold for functions $f$ in a  Bernstein space $B_\sigma^2$ to larger spaces by adding a remainder term which involves the distance of $f$ from $B_\sigma^2$.

First we present a modification of the classical modulation space $M^{2,1}(\R)$, the so-called readapted modulation space $\Mn(\R)$. Our approach to the latter space and its role in functional analysis is novel. In fact, we establish several chains of inclusion relations between $\Mn$ and the more common Lipschitz and Sobolev spaces, including Sobolev spaces of fractional order.

Next we introduce an appropriate metric for describing the distance of a function belonging to one of the latter spaces from a Bernstein space. It will be used for estimating remainders and studying rates of convergence.

In the main part, we present the desired extensions. Our applications include the classical Whittaker-Kotel'nikov-Shannon sampling formula, the reproducing kernel formula, the Parseval decomposition formula, Bernstein's inequality for derivatives, and Nikol'ski{\u{\i}}'s inequality estimating the  $l^p(\Z)$ norm in terms of the $L^p(\R)$ norm.
\end{abstract}

\begin{keyword}
Non-bandlimited functions \sep Sobolev spaces of fractional order \sep modulation spaces \sep Lipschitz spaces \sep Riesz derivatives of fractional order \sep formulae with remainders \sep derivative-free error estimates \sep sampling formulae \sep reproducing kernel formula \sep Parseval decomposition formula \sep Bernstein's inequality
%% keywords here, in the form: keyword \sep keyword

%% PACS codes here, in the form: \PACS code \sep code

%% MSC codes here, in the form: \MSC code \sep code
\MSC[2010] 41A17 \sep 41A80 \sep 42A38 \sep 46E15 \sep 94A20 \sep 26A16 \sep 46E35 \sep 26A33

\end{keyword}

\end{frontmatter}

\section{Overview}
A main subject of this paper is the re-adapted modulation space $\Mn(\R)=\Mn$, which is based on the classical modulation space $M^{2,1}(\R)$ introduced by Feichtinger \cite{,Feichtinger_1983a,Feichtinger_1983}.
The new space $\Mn$ comprises of all functions $f\in \Ltwo$ with norm
\[
  \li\|f\ri\|_{\Mn}:=\norm{f}_{\Ltwo}+\sup_{0<h\le 1} \sum_{n\in\Z\setminus\{-1, 0\}}\sm{} <\infty.
\]
Comparing this norm with that of the classical modulation space $M^{2,1}$ (see \eqref{eq_classical_modulation_norm} below) shows that $\Mn\subset M^{2,1}$, the fact that it is a proper subspace will be shown.

Another space of basic importance in this paper will turn out to be the fractional Sobolev space, also called Bessel potential space or Liouville space, of order $\alpha>0$, namely,
\begin{equation}\label{eq_W_alpha}
  \Rieszalpha:=\big\{f\in\Ltwo\;:\;\exists\ g\in\Ltwo \text{ with } \wh g (v)=|v|^\alpha \wh{f}(v)\}\qquad( \alpha>0).
\end{equation}
It is associated with a fractional order derivative of $f\in\Ltwo$, namely the strong, normed Riesz derivative $D^{\{\alpha\}}\kern-2pt f$, defined for $0< \alpha < 2j$, $j\in\N$, by
\[
  \lim_{\eps\to 0+}\Biggnorm{\frac1{C_{\alpha,2j}} \int_\varepsilon^\infty \frac{\overline\Delta^{2j}_u f(\,\cdot\,)}{u^{1+\alpha}}\du - D^{\{\alpha\}}\kern-2pt f(\,\cdot\,)}_{\Ltwo}=0.
\]
for a specific constant $C_{\alpha,2j}$, the difference $\overline\Delta$ being the central one; see \eqref{eq_C}, \eqref{eq_difference_central}.
It will turn out that $\Rieszalpha$ can be characterized as
\[
  \Rieszalpha = \big\{f\in L^2(\R)\;:\; D^{\{\alpha\}} f\in L^2(\R)\big\};
\]
see Proposition~\ref{prop_Riesz_existence_equiv}.

One of the essential results of this paper is that the space $\Mn$ lies between two Lipschitz spaces, the left one being of order $\alpha$ for any $\alpha>1/2$, the right one being the specific $\Lip_r(\frac12)$ of order $1/2$ (see Section~\ref{sec_Lipschitz} for the definition). Further, $\Lip_r(\alpha)$ lies between $\Rieszalpha$ and $\Rieszbeta$ for any $0<\beta <\alpha <r$. Formally this result reads that for $0<\beta < \frac12 < \alpha < r$, and any $r\in\N$,
\begin{equation}\label{eq_inclusion_intro}
\Rieszalpha\cap C(\R) \subsetneqq \Lip_r(\alpha)\cap C(\R) \subsetneqq \Mn \subsetneqq \Lip_r\pBig{\frac12}\cap C(\R)\subsetneqq \Rieszbeta\cap C(\R).
\end{equation}

One goal of this paper is to show that the readapted modulation space $\Mn$ not only has theoretical applications but especially also those of a more practical nature.

A fundamental inequality in analysis is Bernstein's inequality (see Section~\ref{bernst_ineq}). For functions belonging to the Bernstein space $ B^2_\sigma$, it reads
\[
  \bignorm{f^{(s)}}_{\Ltwo}\le \sigma^s\bignorm{f}_{\Ltwo}\qquad (f\in B^2_\sigma).
\]

If $f$ belongs to the Sobolev space $W^{s,2}(\R)\cap C(\R)$ (see Section~\ref{sec_Sobolev}) for some $s\in\N$ with $v^s\wh f(v)\in \Lone$, rather than to the smaller space $B^2_\sigma$, then
\[
  \bignorm{f^{(s)}}_{\Ltwo}\le \sigma^s\bignorm{f}_{\Ltwo}+\dist_2\pbig{f^{(s)},B^2_\sigma}\qquad (\sigma>0),
\]
where the remainder $\dist_2$ is given by (see Section~\ref{distance} for details)
\[
  \dist_2\pbig{f^{(s)},B^2_\sigma} =\brbigg{\int_{\abs{v}>\sigma}\bigabs{v^s\wh f(v)}^2\dv}^{1/2}.
\]

Concerning the behaviour of this remainder, the following assertions will be shown to be equivalent for $0\le \beta < \alpha < r$ and each $s\in\N_0$ with $s< \alpha$,

\begin{aufzaehlung}{$(ii)$}
\item $\ds f^{(s)} \in \Lip_r(\alpha)$,
\item $\ds \dist_2\pbig{f^{(s)},B^2_\sigma} = \Oh\pbig{\sigma^{-\alpha}}\quad (\sigma\to\un)$.
\end{aufzaehlung}

Recalling \eqref{eq_inclusion_intro}, we see that the space $\Mn$ lies between two Lipschitz spaces, namely,
\begin{equation}\label{eq_inclusion_intro_2}
    \Lip_r(\alpha)\cap C(\R) \subsetneqq \Mn \subsetneqq \Lip_r\pBig{\frac12}\cap C(\R)\qquad (\alpha>1/2).
\end{equation}
It follows from the right-hand inclusion in \eqref{eq_inclusion_intro_2} and (i)$\,\Leftrightarrow\,$(ii) above that $f^{(s)}\in \Mn$ yields the estimate
\begin{equation}\label{eq_modulation_order_intro}
  \dist_2\pbig{f^{(s)},B^2_\sigma} = \Oh\pbig{\sigma^{-1/2}}\quad (\sigma\to\un).
\end{equation}
On the other hand, the left-hand inclusion relation in \eqref{eq_inclusion_intro_2} shows that the order in \eqref{eq_modulation_order_intro} cannot be improved to $\Oh\pbig{\sigma^{-1/2-\eps}}$ for any $\eps > 0$ arbitrarily small.
%On the other hand, the order $\Oh\pbig{\sigma^{-1/2}}$ is the same as that for functions $f\in\Lip_r\!\big(\hal\big)$. This corresponds to the second inclusion %relation from the right in \eqref{eq_inclusion_intro}.
Nevertheless, the question may arise whether it might be possible to improve the order in \eqref{eq_modulation_order_intro} to $\oh\pbig{\sigma^{-1/2}}$. The answer is \emph{no} as will be seen in Proposition~\ref{prop_oh}.

If one, however, replaces the space $\Mn$ by $\Rieszhal$, then
\[
  f^{(s)}\in \Rieszhal \implies \dist_2\pbig{f^{(s)},B^2_\sigma} = \oh\pbig{\sigma^{-1/2}}\quad (\sigma\to\un);
\]
see Corollary~\ref{bernst_Sob} for the details.

\section{Some notations}\label{notation}
For $p\in [1, \infty]$ and $f\in L^p(\R)$, we define
\[
    \|f\|_{L^p(\R)} := \brbigg{\int_\R \abs{f(u)}^p\du}^{1/p}\quad   (1\le p<\infty)
\]
with the usual modification for $p=\infty$. By $C(\R)$ we denote the class of all functions $f\colon\R \to \C$ that are continuous on $\R$.

For the Fourier transform $\wh{f}$ of a function $f$ we prefer the normalization
\[
   \wh{f}(v):=\rez{\sqrt{2\pi}} \,\int_\R f(u) e^{-iuv}\,du\qquad(v\in\R).
\]
For $f\in L^p(\R)$, the integral exists as an ordinary Lebesgue integral when $p=1$ while for $p\in (1,2]$ it is
defined by a limiting process; see \cite[\S\S\,5.2.1--5.2.2]{Butzer-Nessel_1971}.

For $\sigma>0$, let $B_\sigma^2$ be the \emph{Bernstein space} or \emph{Paley-Wiener space} comprising all functions $f\in\Ltwo$, the Fourier transform of which vanishes outside $\bk{-\sigma,\sigma}$. The most prominent example of a function in $B^2_\pi$ is the $\sinc$ function, given by
\begin{align*}
    \sinc z :=
    \begin{cases}
        \dfrac{\sin(\pi z)}{\pi z}, & z\in\C\setminus\{0\},\\[2ex]
        1, & z=0,
    \end{cases}\qquad
     \wh\sinc(v)= \rez{\sqrt{2\pi}}\cdot
     \begin{cases}
             1,      & \abs{v}<\pi,\\[1ex]
            \rez{2}, & \abs{v}=\pi,\\[1ex]
             0,      & \abs{v}>\pi.
     \end{cases}
     %\rez{\sqrt{2\pi}}\, \rect(v)
     %\qquad(v\in\R).
\end{align*}

\section{A hierarchy of spaces extending Bernstein spaces; fractional order derivatives}\label{hierarchy}
The membership of $f$ in $B_\sigma^2$ has many important consequences such as the continuity of $f$, the existence of a Fourier transform $\wh{f}$ belonging to $L^1(\R)$, the reconstruction of $f$ from its Fourier transform and the $\ell^2(\Z)$ summability of samples. Thus, when one looks for suitable generalizations of the Bernstein space $B_\sigma^2$, it is desirable to preserve these properties.

\subsection{Fourier inversion classes}\label{sec_Fourier_inversion_classes}

In order to extend the Bernstein space $B_\sigma^2$ to larger function spaces, we weaken the property of $\wh f$ vanishing outside the compact interval $\bk{-\sigma,\sigma}$, to $\wh f$ belonging to $L^1(\R)$. This still guarantees the reconstructibility of $f$ from its Fourier transform in terms of the inversion formula
\begin{equation}\label{eq_Fourier_inversion}
   f(t) = \rez{\sqrt{2\pi}} \int_\R \wh{f}(v) e^{ivt}\,dv \qquad (t\in\R);
\end{equation}
see \cite[Prop.~5.1.10, 5.2.16]{Butzer-Nessel_1971}.
More generally, we introduce the \emph{Fourier inversion classes} (cf.\ \cite{Butzer-Gessinger_1997, Butzer-Higgins-Stens_2005, Butzer-Splettstoesser-Stens_1988, Butzer-Schmeisser-Stens_2012}),
\[
     F^{s,2}:=\brbig{f\in L^2(\R)\cap C(\R)\,:\, v^s\wh f(v)\in L^1(\R)} \qquad (s\in \N_0).
\]
For $s=0$ we simply write $F^2$ instead of $F^{0,2}$. If $0\le s_1 \le s_2$, then there holds $F^{s_2,2}\subset F^{s_1,2}\subset F^2$. In addition to \eqref{eq_Fourier_inversion}, one has for $f\in F^{s,2}$ that the derivative $f^{(s)}$ exists, belongs to $C(\R)$ and has the
representation
\begin{equation}\label{eq_derivative_representation}
    f^{(s)}(t) = \rez{\sqrt{2\pi}} \int_\R (iv)^s\wh{f}(v) e^{ivt}\,dv \qquad (t\in\R);
\end{equation}
see \cite[Proposition~5.1.17 with $f$ replaced by $\wh{f}\,$]{Butzer-Nessel_1971}.

However, other than in $B_\sigma^2$, the membership of $f$ in $F^{s,2}$ does not guarantee the summability of samples of $f$. Therefore, whenever samples for uniformly spaced points such as $h\Z$ are involved, we shall need in addition that $f$ belongs to
\[
     S_h^p\,:=\,\brbig{\strut f\,:\,\R\to\C\;:\; \li(\strut f(hk)\ri)_{k\in\Z}\in \ell^p(\Z)}\qquad(p=1,2).
\]
We may call $S_h^p$ the {\it $\ell^p$ summability class} for \emph{step size} $h$. Note that $S_h^{1}\subset S_h^{2}$. Furthermore, if $f\in B_\sigma^2$ for some $\sigma>0$, then
\[
    f \in F^{s,2} \cap S_h^2
\]
for every $s\in\N_0$ and $h>0$. %, where $f|_\R$ denotes the restriction of $f$ to $\R$.
We may therefore consider $F^{s,2}\cap S_h^2$ as well as $F^2$ itself as extensions of $B_\sigma^2$.

The Fourier inversion classes are in some sense the most general spaces in which our studies can be performed. Spaces between $B_\sigma^2$ and $F^{s,2}$ are also of interest since they will yield smaller errors in the extended formulae.

\subsection{Sobolev spaces}\label{sec_Sobolev}
For $r\in\N$, denote by $\mathrm{AC}_{{\rm loc}}^{r-1}(\R)$ the class of all functions that are $(r-1)$-times locally absolutely continuous on $\R$; see \cite[pp.~6--7]{Butzer-Nessel_1971}. The following class has been considered in Fourier analysis:
\begin{equation*}%\label{def_Sobolev}
W^{r,2}(\R) := \Big\{f\,:\,\R\to\C\;:\; f=\phi \text{ a.\,e., }
    \phi\in \mathrm{AC}_{{\rm loc}}^{r-1}(\R), \, \phi^{(k)}\in \Ltwo,\ 0\le k\le r\Big\};
\end{equation*}
see \cite[(3.1.48)]{Butzer-Nessel_1971}. For $f\in W^{r,2}(\R)$, we may write $f^{(k)}$ instead of $\phi^{(k)}$ for $k=0,\dots, r$. By endowing $W^{r,2}(\R)$ with the norm
\begin{equation}\label{eq_Sobolev_norm}
  \|f\|_{W^{r,2}(\R)} :=  \bigg\{\sum_{k=0}^r \bignorm{f^{(k)}}_{\Ltwo}^2\bigg\}^{1/2},
\end{equation}
we may identify it as a \emph{Sobolev space.} In connection with Fourier transforms, the following alternative description of $W^{r,2}(\R)$ is of interest; see
\cite[Theorem~5.2.21]{Butzer-Nessel_1971}.

\begin{proposition}\label{propw1}
We have
\begin{align*}
   \Wrtwo ={}& \brBig{f\in\Ltwo\;:\; v^r \wh{f}(v)\in\Ltwo}\\[2ex]
   {}={}&\Big\{f\in\Ltwo\;:\; (iv)^r \wh{f}(v)=\wh g(v),\ g\in\Ltwo\Big\}.
\end{align*}
Furthermore, $\wh{f^{(r)}}(v)=(iv)^r\wh f(v)=\wh g(v)$ a.\,e.
\end{proposition}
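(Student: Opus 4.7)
The plan is to prove the two set-equalities in turn, with the differentiation identity $\wh{f^{(r)}}(v)=(iv)^r\wh f(v)$ emerging from the argument. The equivalence of the two right-hand descriptions is essentially a restatement of Plancherel's theorem: since $|i^r|=1$, the function $v^r\wh f(v)$ belongs to $\Ltwo$ if and only if $(iv)^r\wh f(v)$ does, and by the bijectivity of the Fourier transform on $\Ltwo$, this is in turn equivalent to the existence of $g\in\Ltwo$ with $\wh g(v)=(iv)^r\wh f(v)$.

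For the inclusion $\Wrtwo \subseteq \brbig{f\in\Ltwo : v^r\wh f(v)\in\Ltwo}$, I would induct on $r$. In the base case $r=1$, with $f\in W^{1,2}(\R)$ absolutely continuous and $f,f'\in\Ltwo$, the identity $f(x)^2-f(0)^2=2\int_0^x f(t)f'(t)\dt$ together with Cauchy-Schwarz forces $f(x)$ to have a finite limit as $|x|\to\infty$, which must be $0$ since $f\in\Ltwo$. Integration by parts applied to $\int_{-N}^N f(u)e^{-iuv}\du$ and then sending $N\to\infty$ yields $\wh{f'}(v)=iv\wh f(v)$ in the $\Ltwo$ sense. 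Iterating gives $\wh{f^{(r)}}(v)=(iv)^r\wh f(v)$, and $f^{(r)}\in\Ltwo$ combined with Plancherel produces $v^r\wh f\in\Ltwo$.

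For the reverse inclusion, let $f\in\Ltwo$ with $v^r\wh f(v)\in\Ltwo$. The elementary estimate $|v|^k\le 1+|v|^r$ for $0\le k\le r$ shows $v^k\wh f(v)\in\Ltwo$, so Plancherel furnishes $f_k\in\Ltwo$ with $\wh{f_k}(v)=(iv)^k\wh f(v)$ and $f_0=f$. I would then argue that $f_k$ arises as a derivative of $f_{k-1}$ via mollification: fix a smooth compactly supported approximate identity $\rho_\eps$ and set $f^\eps:=f\star\rho_\eps$. The computation $\wh{(f^\eps)^{(k)}}(v)=(iv)^k\wh f(v)\wh{\rho_\eps}(v)=\wh{f_k\star\rho_\eps}(v)$ shows $(f^\eps)^{(k)}=f_k\star\rho_\eps\to f_k$ in $\Ltwo$ as $\eps\to 0$. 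Passing to a subsequence and integrating $(f^\eps)'=f_1\star\rho_\eps$ from a suitable base point, one exhibits $f$ as admitting a representative in $\mathrm{AC}_{\rm loc}^{r-1}(\R)$ with $f^{(k)}=f_k$ a.e.\ for $0\le k\le r$, placing $f$ in $\Wrtwo$.

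The main technical obstacle is justifying the exchange between multiplication of $\wh f$ by $(iv)^k$ and classical $k$-fold differentiation of $f$ without the luxury of $\wh f\in\Lone$ (which is available in the companion class $F^{s,2}$ but not here). In both directions this forces an approximation argument---decay at infinity plus integration by parts for $\subseteq$, and mollification for $\supseteq$---rather than a direct appeal to pointwise Fourier inversion.
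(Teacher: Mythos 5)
Your proposal is correct, but note that the paper does not actually prove Proposition~3.1: it cites it verbatim from Butzer--Nessel (Theorem~5.2.21), and its only in-text contribution is the remark that the two right-hand descriptions coincide because the Fourier transform is an isometry of $\Ltwo$ onto itself --- which is exactly your first paragraph. What you supply beyond that is a complete, self-contained proof of the cited classical theorem, and both halves are sound: the forward inclusion via the vanishing of $f$ at infinity (forced by $f,f'\in\Ltwo$) plus integration by parts on $\int_{-N}^{N}$, iterated over the derivatives; and the reverse inclusion via mollification, where the identity $(f^{\eps})^{(k)}=f_k\star\rho_\eps$ legitimately invokes the already-established forward direction for the smooth function $f^{\eps}$, so there is no circularity. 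Two steps deserve one more line each if you write this out in full: when you let $N\to\infty$ in the integrated-by-parts identity, the term $iv\int_{-N}^{N}f(u)e^{-iuv}\du$ does not converge in $\Ltwo$ merely because $\int_{-N}^{N}f(u)e^{-iuv}\du$ does (multiplication by $iv$ is unbounded), so you must pass to an a.e.-convergent subsequence before identifying $\wh{f'}(v)=iv\wh f(v)$ a.e.; and in the mollification step you need to observe that the constants of integration $(f^{\eps})^{(k-1)}(a)$ converge as $\eps\to0$ (they do, since $(f^{\eps})^{(k-1)}(x)-\int_a^x(f^{\eps})^{(k)}(t)\dt$ is constant in $x$ and converges in $L^2_{\mathrm{loc}}$). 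Both are routine repairs; the architecture of the argument is the standard one and is essentially what the cited reference does.
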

The two characterizations of the space $\Wrtwo$ coincide since the Fourier transform is an isometry from $\Ltwo$ onto itself.

An important inequality in analysis is that of S.\,M.~Nikol'ski\u{\i} (1951) given in \eqref{nikol0} below. From the proof in \cite[pp.~123--124]{Nikolskii-1975} we can extract the following statement.
\begin{proposition}\label{propw2}
Let $f\in W^{1,2} \cap C(\R)$. Then
\[
  \bigg\{ h \sum_{k\in\Z} \abs{f(hk)}^2 \bigg\}^{1/2} \le \|f\|_{\Ltwo}+  h\|f'\|_{\Ltwo}
\]
for any $h>0.$
\end{proposition}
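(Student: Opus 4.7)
The plan is to bound each sample value $f(hk)$ in terms of a local $L^2$ average of $f$ plus a local $L^1$ integral of $f'$, and then sum using Minkowski and Cauchy--Schwarz. Since $f\in W^{1,2}(\R)\cap C(\R)$, we have $f\in \mathrm{AC}_{\mathrm{loc}}(\R)$, so the identity
\[
f(hk) = \frac{1}{h}\int_{hk}^{h(k+1)} f(x)\,dx \;+\; \frac{1}{h}\int_{hk}^{h(k+1)} \bigl(f(hk)-f(x)\bigr)\,dx
\]
is available for each $k\in\Z$. First I would apply Minkowski's inequality on $\ell^2(\Z)$ (after multiplying by $\sqrt{h}$ and taking the square root of the sum) to split the quantity $\bigl\{h\sum_{k}|f(hk)|^2\bigr\}^{1/2}$ into two pieces corresponding to the two terms on the right-hand side.

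For the first piece, Cauchy--Schwarz on each interval $[hk,h(k+1)]$ gives
\[
\absbigg{\frac{1}{h}\int_{hk}^{h(k+1)} f(x)\,dx}^{2} \le \frac{1}{h}\int_{hk}^{h(k+1)} |f(x)|^{2}\,dx,
\]
so after multiplying by $h$ and summing over $k\in\Z$, the intervals tile $\R$ and the piece is bounded by $\|f\|_{L^{2}(\R)}$. For the second piece, the fundamental theorem of calculus (applicable because $f$ is locally absolutely continuous) yields
\[
|f(hk)-f(x)| \le \int_{hk}^{h(k+1)} |f'(t)|\,dt \qquad (x\in[hk,h(k+1)]),
\]
so the inner average is at most $\int_{hk}^{h(k+1)} |f'(t)|\,dt$. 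Another application of Cauchy--Schwarz bounds this by $\sqrt{h}\bigl\{\int_{hk}^{h(k+1)} |f'(t)|^{2}\,dt\bigr\}^{1/2}$; squaring, multiplying by $h$, summing over $k$, and taking the square root then gives $h\,\|f'\|_{L^{2}(\R)}$. Adding the two bounds produces the claimed inequality.

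The only delicate point is justifying termwise manipulations when the sum over $k$ is infinite, but this is routine: both majorizing sums telescope into convergent $L^{2}$ integrals over $\R$, so Fubini/Tonelli applies without issue. I do not expect a real obstacle here; the argument is essentially a partition-of-$\R$-into-intervals-of-length-$h$ computation, and it is reversible enough that one could also see the inequality as sharpness-up-to-constants of the obvious trapezoid-like estimate.
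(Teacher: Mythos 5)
Your proof is correct, and it is essentially the same argument the paper attributes to Nikol'ski\u{\i} (the paper only cites \cite{Nikolskii-1975} rather than reproducing it): partition $\R$ into intervals of length $h$, compare each sample $f(hk)$ with the local average via the fundamental theorem of calculus (legitimate here since $f=\phi$ everywhere with $\phi\in \mathrm{AC}_{\rm loc}(\R)$, by continuity), and control the two pieces by Cauchy--Schwarz and Minkowski in $\ell^2(\Z)$. The convergence worries you flag are indeed harmless, since both majorants are nonnegative and sum to $\|f\|_{L^2(\R)}^2$ and $h^2\|f'\|_{L^2(\R)}^2$ respectively.
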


Propositions~\ref{propw1} and \ref{propw2} imply that for $r\in \N$ and $h>0$,
\begin{equation}\label{sobolev1}
  B_\sigma^2 \subsetneqq\Wrtwo \cap C(\R) \subsetneqq F^{r-1,2}\cap S_h^2\subset F^2 \cap S_h^2\subsetneqq F^{2} \subsetneqq L^2(\R).
\end{equation}

\subsection{Fractional order derivatives}\label{sec_frac_der}
In order to generalize Proposition~\ref{propw1} to fractional order derivatives, we consider the spaces
\begin{align*}
   \Rieszalpha:= {}& \brBig{f\in\Ltwo\;:\; |v|^\alpha \wh{f}(v)\in\Ltwo}\\[2ex]
   {}={}&\Big\{f\in\Ltwo\;:\; |v|^\alpha \wh{f}(v)=\wh g(v),\ g\in\Ltwo\Big\}\qquad (\alpha>0).
\end{align*}

In view of Proposition~\ref{propw1} there holds
\begin{equation}\label{eq_W_r}
  \Rieszr=\Wrtwo \qquad (r\in\N).
\end{equation}

In this section we are going to characterize the spaces $\Rieszalpha$ for arbitrary $\alpha>0$ in terms of fractional order derivatives.
 %To this end we need further information concerning fractional order derivatives.

For $\alpha>0$, $j\in\N$ with $2j>\alpha$ we set
\[
  {\Rae f}(x):=\frac{1}{C_{\alpha,2j}}\int_\varepsilon^\infty \frac{\overline\Delta^{2j}_u f(x)}{u^{1+\alpha}}\du\qquad(x\in\R),
\]
where
\begin{equation}\label{eq_C}
  C_{\alpha,2j}:=(-1)^j2^{2j-\alpha} \int_0^\infty \frac{\sin^{2j}u}{u^{1+\alpha}}\,du,
\end{equation}
and
\begin{equation}\label{eq_difference_central}
  \overline\Delta^{2j}_u f(x):=\sum_{k=0}^{2j}(-1)^k\bin {2j} k f\pbig{x+\p{j-k}u}\qquad(x,u\in\R)
\end{equation}
is the central difference of $f$ of order $2j$ at $x$ with increment $u$.
$\Rae$ turns out to be a bounded linear operator mapping $\Ltwo$ into itself satisfying
\begin{equation}\label{eq_Rae_bound}
  \bignorm{\Rae f}_{\Ltwo}\le \frac{2^{2j}}{\alpha\eps^\alpha\abs{C_{\alpha,2j}}}\norm{f}_{\Ltwo}\qquad \pbig{f\in\Ltwo}.
\end{equation}

\begin{proposition}\label{prop_Rae}
The Fourier transform of $\Rae f$ is given by
\begin{equation}\label{eq_FT_Rae}
   \wh{\Rae f}(v)= \eta_{2j,\alpha,\eps}(v)\wh f(v)\quad a.\,e.,
\end{equation}
where
\[
  \eta_{2j,\alpha,\eps}(v):=\frac{(-1)^j 2^{2j}}{C_{\alpha,2j}}\int_\eps^\un u^{-1-\alpha} \sin^{2j}\pBig{\frac{vu}{2}}\du\qquad\p{v\in\R}.
\]
\end{proposition}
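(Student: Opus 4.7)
The plan is to compute $\widehat{\Rae f}$ by interchanging the $L^2$--Fourier transform with the improper integral in $u$, and to justify this interchange via Bochner integration in $\Ltwo$. First, since each translation is an $L^2$--isometry, the triangle inequality applied to \eqref{eq_difference_central} gives
\[
  \bignorm{\overline{\Delta}^{2j}_u f}_{\Ltwo}\le 2^{2j}\norm{f}_{\Ltwo}\qquad(u\in\R),
\]
so the $\Ltwo$--valued map $u\mapsto u^{-1-\alpha}\overline{\Delta}^{2j}_u f$ is norm-dominated by $2^{2j}\norm{f}_{\Ltwo}u^{-1-\alpha}$, which is integrable over $[\eps,\infty)$ for every $\eps>0$, $\alpha>0$. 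Hence the integral defining $\Rae f$ exists as a Bochner integral in $\Ltwo$ (and as a by-product recovers \eqref{eq_Rae_bound}). Since the Fourier transform is a continuous linear operator on $\Ltwo$, it commutes with Bochner integration, yielding
\[
  \widehat{\Rae f}(v) = \frac{1}{C_{\alpha,2j}}\int_\eps^\infty \frac{\widehat{\overline{\Delta}^{2j}_u f}(v)}{u^{1+\alpha}}\du\quad\text{in}\ \Ltwo.
\]

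Second, I would compute the Fourier transform of the central difference using the translation rule $\widehat{f(\cdot+t)}(v)=e^{itv}\wh f(v)$, which applied to \eqref{eq_difference_central} gives
\[
  \widehat{\overline{\Delta}^{2j}_u f}(v) = \wh f(v)\sum_{k=0}^{2j}(-1)^k\binom{2j}{k}e^{iv(j-k)u}.
\]
Setting $w:=e^{ivu/2}$ and using $1-w^{-2}=w^{-1}(w-w^{-1})$, the sum collapses to $w^{2j}(1-w^{-2})^{2j}=(w-w^{-1})^{2j} = \bigl(2i\sin(vu/2)\bigr)^{2j} = (-1)^j 2^{2j}\sin^{2j}(vu/2)$, hence
\[
  \widehat{\overline{\Delta}^{2j}_u f}(v) = (-1)^j 2^{2j}\sin^{2j}(vu/2)\,\wh f(v).
\]

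Substituting this into the displayed expression for $\widehat{\Rae f}(v)$ and pulling the $v$--independent factors out of the integral produces exactly $\eta_{2j,\alpha,\eps}(v)\wh f(v)$, establishing \eqref{eq_FT_Rae}. The main obstacle is the rigorous interchange of Fourier transformation with the $u$--integral: a naive Fubini on $\R\times[\eps,\infty)$ would demand joint absolute integrability of $e^{-ivx}\overline{\Delta}^{2j}_u f(x)u^{-1-\alpha}$, which is \emph{not} ensured by $f\in\Ltwo$ alone. The Bochner-integral viewpoint, combined with the continuity of the Fourier transform on $\Ltwo$, bypasses this issue cleanly; the remainder of the argument is the standard binomial computation for the central difference and the definition of $\eta_{2j,\alpha,\eps}$.
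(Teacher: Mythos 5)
Your proof is correct, but it follows a genuinely different route from the paper. The paper first treats $f\in\Lone\cap\Ltwo$, where classical Fubini applies directly (citing \cite[top of p.~414]{Butzer-Nessel_1971}), and then passes to general $f\in\Ltwo$ by a density argument, using the uniform bound \eqref{eq_Rae_bound} on $\Rae$ together with the boundedness of $\eta_{2j,\alpha,\eps}$ from Lemma~\ref{la_eta_je}\,(i) to show that $\|\wh{\Rae f}-\eta_{2j,\alpha,\eps}\wh f\|_{\Ltwo}$ is the limit of quantities tending to zero. You instead work with general $f\in\Ltwo$ from the outset, interpreting $\Rae f$ as an $\Ltwo$-valued Bochner integral (the strong measurability being guaranteed by the continuity of translation in $\Ltwo$, and the norm bound $\|\overline{\Delta}^{2j}_u f\|_{\Ltwo}\le 2^{2j}\|f\|_{\Ltwo}$ giving integrability of the dominating function $u^{-1-\alpha}$), and then commute the Fourier transform, a bounded operator on $\Ltwo$, with the Bochner integral. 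Your binomial computation of $\wh{\overline{\Delta}^{2j}_u f}(v)=(-1)^j2^{2j}\sin^{2j}(vu/2)\,\wh f(v)$ is correct and makes explicit what the paper only cites. What your approach buys is a single self-contained argument that avoids the two-step density passage; what the paper's buys is reliance only on scalar Fubini and elementary approximation, with no vector-valued integration theory. The one step you leave tacit is the identification of the $\Ltwo$-valued Bochner integral with the pointwise-a.e.\ integral in $v$; this is standard (pair with an arbitrary $g\in\Ltwo$ and apply Fubini, which is legitimate since $\int_\eps^\infty\|u^{-1-\alpha}\overline{\Delta}^{2j}_u f\|_{\Ltwo}\|g\|_{\Ltwo}\,du<\infty$), but a sentence to that effect would close the argument completely.
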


Before proving Proposition~\ref{prop_Rae}, we list three properties of the function $\eta_{2j,\eps,\alpha}$, the proofs of which are quite elementary.

\begin{lemma}\label{la_eta_je}
For $\eta_{2j,\alpha,\eps}$, as defined above, there holds
\begin{aufzaehlung}{$(iii)$}
\item $\abs{\eta_{2j,\alpha,\eps}(v)}\le M_{2j,\alpha,\eps}\qquad (v\in\R)$\\[1ex]
for some constant $M_{2j,\alpha,\eps}$, independent of $v$,
\item $\abs{\eta_{2j,\alpha,\eps}(v)}\le \abs{v}^\alpha \qquad (v\in\R;\eps>0)$,
\item $\ds \lim_{\eps\to 0+} \eta_{2j,\alpha,\eps}(v)= \abs{v}^\alpha\qquad (v\in\R)$.
\end{aufzaehlung}
\end{lemma}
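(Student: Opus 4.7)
All three statements follow by reducing the integral in $\eta_{2j,\alpha,\eps}$ to the defining constant $C_{\alpha,2j}$ via a scaling substitution, and then exploiting the non-negativity of the integrand $u^{-1-\alpha}\sin^{2j}(vu/2)$.

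First, for part~(i), I will use the crude estimate $|\sin^{2j}(vu/2)|\le 1$, which gives
\[
  \abs{\eta_{2j,\alpha,\eps}(v)}\le \frac{2^{2j}}{\abs{C_{\alpha,2j}}}\int_\eps^\infty u^{-1-\alpha}\du = \frac{2^{2j}}{\alpha\,\eps^\alpha\abs{C_{\alpha,2j}}}=:M_{2j,\alpha,\eps},
\]
which is independent of $v$. (Note the same estimate was already quoted in \eqref{eq_Rae_bound}.)

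For parts~(ii) and~(iii), the key step is the identity
\[
  \int_0^\infty u^{-1-\alpha}\sin^{2j}\pBig{\frac{vu}{2}}\du = \frac{(-1)^j C_{\alpha,2j}}{2^{2j}}\,\abs{v}^\alpha \qquad (v\in\R),
\]
which I obtain by substituting $w=\abs{v}u/2$ (for $v\neq 0$): $u^{-1-\alpha}\du$ transforms to $(\abs{v}/2)^\alpha w^{-1-\alpha}\dd w$, so the integral becomes $(\abs{v}/2)^\alpha\int_0^\infty w^{-1-\alpha}\sin^{2j}w\dd w$, and \eqref{eq_C} supplies the value of the last integral. Observe that since the integrand $w^{-1-\alpha}\sin^{2j}w$ is non-negative and non-zero, the constant $(-1)^j C_{\alpha,2j}$ is strictly positive, hence the multiplicative factor $(-1)^j 2^{2j}/C_{\alpha,2j}$ appearing in $\eta_{2j,\alpha,\eps}$ is itself positive. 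Consequently $\eta_{2j,\alpha,\eps}(v)\ge 0$ for all $v$.

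Assertion~(ii) then follows immediately: since the integrand is non-negative, truncation at $\eps$ only decreases the value, so
\[
  0\le \eta_{2j,\alpha,\eps}(v) \le \frac{(-1)^j 2^{2j}}{C_{\alpha,2j}}\int_0^\infty u^{-1-\alpha}\sin^{2j}\pBig{\frac{vu}{2}}\du = \abs{v}^\alpha.
\]
Assertion~(iii) follows from the same identity together with the monotone convergence theorem applied to the non-negative integrand as $\eps\to 0+$ (or equivalently by noting that the finite integral $\int_0^\eps u^{-1-\alpha}\sin^{2j}(vu/2)\du$, bounded above by $(\abs{v}/2)^{2j}\int_0^\eps u^{2j-1-\alpha}\du\to 0$, vanishes in the limit).

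The only non-routine ingredient is the substitution that recovers $C_{\alpha,2j}$; once this is in place, the positivity-based arguments for~(ii) and~(iii) are immediate, and no delicate estimate or dominated convergence argument beyond the trivial bound is required.
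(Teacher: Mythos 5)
Your proof is correct and is exactly the ``quite elementary'' argument the paper alludes to but omits: the trivial bound $\sin^{2j}(vu/2)\le 1$ gives (i) with the same constant as in \eqref{eq_Rae_bound}, and the substitution $w=\abs{v}u/2$ recovering $C_{\alpha,2j}$, combined with the positivity of the integrand, yields (ii) and (iii) at once. No issues.
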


Now to the proof of Proposition~\ref{prop_Rae}:

\begin{proof}
In case $f\in \Lone\cap\Ltwo$ the conclusion of Proposition~\ref{prop_Rae} follows easily by Fubini's theorem (see \cite[top of p.~414]{Butzer-Nessel_1971} for $j=1$). For arbitrary $f\in\Ltwo$ we choose a sequence $f_n\in \Lone \cap\Ltwo$, $n\in\N$ with $\lim_{n\to\un}\norm{f_n-f}_{\Ltwo}=0$. Then by \eqref{eq_Rae_bound}, Lemma~\ref{la_eta_je}\,(i), and the isometry property of the Fourier transform,
\begin{align*}
  \Bignorm{\wh{\Rae f}- \eta_{2j,\alpha,\eps}\wh f\,}_{\Ltwo}{}\le{}&
  \Bignorm{\wh{\Rae f}- \wh{\Rae f_n}}_{\Ltwo} + \bignorm{\eta_{2j,\alpha,\eps}\bigbk{\wh f_n-\wh f\;}}_{\Ltwo}\\[2ex]
  {}\le{}& M \norm{f-f_n}_{\Ltwo} + M_\eps \norm{f_n-f}_{\Ltwo}=\oh(1)\quad (n\to\un),
\end{align*}
where $M:= {2^{2j}}\p{\alpha\eps^\alpha|C_{\alpha,2j}|}^{-1}$ is the constant on the right-hand side of \eqref{eq_Rae_bound} and $M_\eps:=M_{2j,\alpha,\eps}$ is that in Lemma~\ref{la_eta_je}\,(i).
This proves the assertion.
\end{proof}

\begin{definition}\label{def_Riesz_derivative}
A function $f\in L^2(\R)$ is said to have a strong (norm) Riesz derivative of fractional order $0<\alpha < 2j$, $j\in \N$, if there exists function $g\in L^2(\R)$ such that
\[
\lim_{\eps\to 0+}\bignorm{\Rae f-g}_{\Ltwo}=
  \lim_{\eps\to 0+}\Biggnorm{\frac1{C_{\alpha,2j}} \int_\varepsilon^\infty \frac{\overline\Delta^{2j}_u f(\,\cdot\,)}{u^{1+\alpha}}\du-g(\,\cdot\,) }_{\Ltwo}=0.
\]
Then the strong Riesz derivative is defined by $D^{\{\alpha\}}\kern-2pt f:=g$.
\end{definition}

Of course one has to show that Definition~\ref{def_Riesz_derivative} is independent of $j\in\N$, which is implicitly contained in Proposition~\ref{prop_FT_Riesz_derivative} below.

Let us observe that in case $0\le \alpha < 2$ one may choose $j=1$ and the operator $R^{\{\alpha\}}_{2,\eps}$ can simply be rewritten as
\[
  R^{\{\alpha\}}_{2,\eps}\kern-1pt f(x)
=\frac1{C_{\alpha,2}}\int_\varepsilon^\infty \frac{\overline\Delta^{2}_u f(x)}{u^{1+\alpha}}\du
  =\frac{1}{\Lambda_c(-\alpha)}\int_{\abs u\ge \eps}\frac{f(x-u)-f(x)}{\abs u ^{1+\alpha}}\du
\]
with
\[
  \Lambda_c(\alpha):=2\Gamma(\alpha)\cos\pBig{\frac{\pi\alpha}{2}},
\]
where the singularity of $\Lambda_c(\alpha)$ at $\alpha=-1$ is removed by setting $\Lambda_c(-1)=-\pi$. This case is treated for $1\le p\le 2$ in great detail in \cite[Section~11.3]{Butzer-Nessel_1971}, the extension to arbitrary $\alpha >0$ being straightforward. The proofs presented here are much simpler since the matter is restricted to $p=2$.

In this instance, Definition~\ref{def_Riesz_derivative} turns out to be the \emph{classical} fractional order derivative studied by M.~Riesz in his innovative treatise of 1927 \cite{Riesz_1927}. The article of A.~Marchaud \cite{Marchaud_1927}, also of that year, which plays an essential role in approximation theory and fractional calculus, is the basis of the extension to arbitrary $\alpha>0$. See also \cite{Samko_2002,Samko-Kilbas-Marichev_1993}, and for a modulus of smoothness related to the Riesz derivative see \cite{Runovski-Schmeisser_2014}.

\begin{proposition}\label{prop_FT_Riesz_derivative}
If $f\in \Ltwo$ has a strong Riesz derivative of order $\alpha>0$, then
\begin{equation}\label{eq_FT_Riesz}
   \wh{D^{\{\alpha\}}\kern-2pt f}(v)= |v|^\alpha\wh f(v)\quad a.\,e.
\end{equation}
\end{proposition}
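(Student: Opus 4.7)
The plan is to combine Proposition~\ref{prop_Rae}, which gives the multiplier representation $\wh{\Rae f}(v) = \eta_{2j,\alpha,\eps}(v)\wh f(v)$, with the $L^2$-convergence in Definition~\ref{def_Riesz_derivative}, and then use the pointwise limit in Lemma~\ref{la_eta_je}\,(iii) to identify the Fourier transform on the right-hand side.

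First I would exploit the Plancherel/isometry property: since $\bignorm{\Rae f - D^{\{\alpha\}}f}_{\Ltwo}\to 0$ as $\eps\to 0+$, applying the Fourier transform yields
\[
  \bignorm{\wh{\Rae f} - \wh{D^{\{\alpha\}}f}\,}_{\Ltwo} \longrightarrow 0 \qquad (\eps\to 0+).
\]
By Proposition~\ref{prop_Rae}, the left-hand side equals $\bignorm{\eta_{2j,\alpha,\eps}\wh f - \wh{D^{\{\alpha\}}f}\,}_{\Ltwo}$, so $\eta_{2j,\alpha,\eps}\wh f \to \wh{D^{\{\alpha\}}f}$ in $\Ltwo$ as $\eps\to 0+$.

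Second, I would extract a sequence $\eps_n\to 0+$ along which this convergence holds almost everywhere. By Lemma~\ref{la_eta_je}\,(iii), $\eta_{2j,\alpha,\eps_n}(v)\to \abs{v}^\alpha$ for every $v\in\R$, and therefore $\eta_{2j,\alpha,\eps_n}(v)\wh f(v)\to \abs{v}^\alpha \wh f(v)$ a.e. Uniqueness of a.e.\ limits then forces $\wh{D^{\{\alpha\}}f}(v) = \abs{v}^\alpha\wh f(v)$ a.e., which is \eqref{eq_FT_Riesz}. As a byproduct, $\abs{v}^\alpha\wh f(v)\in\Ltwo$, so $f\in \Rieszalpha$, and the identification is manifestly independent of the choice of $j$, which settles the well-definedness issue raised after Definition~\ref{def_Riesz_derivative}.

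There is essentially no serious obstacle. The only subtlety is that one cannot pass to the pointwise limit under the $L^2$-norm directly (the dominant function $\abs{v}^\alpha\abs{\wh f(v)}$ is not a priori in $\Ltwo$, which is precisely what we are trying to establish); the subsequence argument circumvents this cleanly. An alternative route would be to invoke Lemma~\ref{la_eta_je}\,(ii) together with Fatou's lemma to show $\abs{v}^\alpha\wh f\in\Ltwo$ first, and then apply dominated convergence, but the subsequence approach is more direct.
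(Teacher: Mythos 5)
Your proposal is correct and follows essentially the same route as the paper: combine the multiplier identity of Proposition~\ref{prop_Rae} with the $L^2$-convergence from Definition~\ref{def_Riesz_derivative} via the Fourier isometry, then identify the limit a.e.\ using Lemma~\ref{la_eta_je}\,(iii). Your explicit extraction of an a.e.\ convergent subsequence is just the standard justification of the paper's closing remark that ``the pointwise limit must coincide a.e.\ with the strong limit,'' so there is nothing to add.
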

\begin{proof}
By Proposition~\ref{prop_Rae} and Lemma~\ref{la_eta_je}\,(iii) we have
\[
  \lim_{\eps\to 0+} \wh{\Rae f} (v)=\lim_{\eps\to 0+} \eta_{2j,\alpha,\eps}(v)\wh f (v) =\abs{v}^\alpha \wh f (v)\qquad (v\in\R).
\]

On the other hand, by the isometry property of the Fourier transform,
\[
  \lim_{\eps\to 0+}\Bignorm{\wh{\Rae f} -\wh{D^{\{\alpha\}}\kern-2pt f} }_{\Ltwo}
  =\lim_{\eps\to 0+}\bignorm{\Rae f-D^{\{\alpha\}}\kern-2pt f }_{\Ltwo}=0.
\]
Since the pointwise limit must coincide a.\,e.\ with the strong limit, the assertion follows.
\end{proof}

Noting Proposition~\ref{prop_FT_Riesz_derivative}, we see that the Riesz derivative may equivalently be defined in terms of the inverse Fourier transform by
\begin{equation}\label{eq_riesz_equiv_defi}
   D^{\{\alpha\}}\kern-2pt f(x)= \frac{1}{\sqrt{2\pi}}\int_\R|v|^\alpha\wh f(v)e^{ivx}\dv,
\end{equation}
where the convergence of the integral is to be understood in $\Ltwo$-norm. In particular, if $\abs{v}^\alpha\wh f(v)\in\Lone$, then the integral in \eqref{eq_riesz_equiv_defi} exists as an ordinary Lebesgue integral.

Now to the characterization of the space $\Rieszalpha$ in terms of fractional order derivatives.
\begin{proposition}\label{prop_Riesz_existence_equiv}
The following assertions are equivalent for $f\in\Ltwo$:
\begin{aufzaehlung}{$(iii)$}
\item $f$ has a strong Riesz derivative $D^{\{\alpha\}}\kern-2pt f$;
\item there holds
\[
  \bignorm{{\Rae f}(v)}_{\Ltwo}=
  \Bigg\|\frac1{C_{\alpha,2j}} \int_\varepsilon^\infty \frac{\overline\Delta^{2j}_u f(\,\cdot\,)}{u^{1+\alpha}}\du \Bigg\|_{L^2(\R)}=\Oh(1)\qquad (\eps\to0+);
\]
\item $f\in \Rieszalpha$, \ie\ $|v|^\alpha \wh f(v)\in \Ltwo$.
\end{aufzaehlung}
In this event, the function $g\in \Ltwo$ defined via $\wh g(v)= |v|^\alpha \wh f(v)$ a.\,e.\ on $\R$  is just the derivative  $D^{\{\alpha\}}\kern-2pt f$.
\end{proposition}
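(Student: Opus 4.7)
The plan is to prove the three-way equivalence via the circle $(i)\Rightarrow(ii)\Rightarrow(iii)\Rightarrow(i)$, with all the analytic work happening on the Fourier side by virtue of Proposition~\ref{prop_Rae} and the Plancherel isometry. Throughout I will abbreviate $\eta_\eps:=\eta_{2j,\alpha,\eps}$ and $R_\eps:=\Rae$.

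The implication $(i)\Rightarrow(ii)$ is immediate: a norm-convergent family is bounded, so $\|R_\eps f\|_{\Ltwo}\to\|D^{\{\alpha\}}f\|_{\Ltwo}$ implies the $\Oh(1)$ bound. For $(ii)\Rightarrow(iii)$, note that by Proposition~\ref{prop_Rae} and the isometry of the Fourier transform,
\[
   \bignorm{\eta_\eps\wh f}_{\Ltwo}=\bignorm{R_\eps f}_{\Ltwo}=\Oh(1)\qquad(\eps\to0+).
\]
By Lemma~\ref{la_eta_je}\,(iii) we have $|\eta_\eps(v)\wh f(v)|^{2}\to |v|^{2\alpha}|\wh f(v)|^{2}$ pointwise as $\eps\to0+$, so Fatou's lemma yields
\[
   \int_\R |v|^{2\alpha}\bigabs{\wh f(v)}^{2}\dv\le \liminf_{\eps\to0+}\bignorm{\eta_\eps\wh f}_{\Ltwo}^{2}<\infty,
\]
which is exactly $|v|^\alpha\wh f(v)\in\Ltwo$, i.e., $f\in\Rieszalpha$.

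For $(iii)\Rightarrow(i)$, define $g\in\Ltwo$ by $\wh g(v):=|v|^\alpha\wh f(v)$. Using Proposition~\ref{prop_Rae} and isometry again,
\[
   \bignorm{R_\eps f-g}_{\Ltwo}^{2}=\int_\R\bigabs{\eta_\eps(v)-|v|^\alpha}^{2}\bigabs{\wh f(v)}^{2}\dv.
\]
By Lemma~\ref{la_eta_je}\,(ii), $|\eta_\eps(v)-|v|^\alpha|\le 2|v|^\alpha$, so the integrand is dominated by $4|v|^{2\alpha}|\wh f(v)|^{2}\in\Lone$ (thanks to (iii)), while Lemma~\ref{la_eta_je}\,(iii) gives pointwise convergence to zero. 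The dominated convergence theorem then forces $R_\eps f\to g$ in $\Ltwo$, so $f$ has a strong Riesz derivative and $D^{\{\alpha\}}f=g$.

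The final identification of $D^{\{\alpha\}}f$ with $g$ in the last sentence of the proposition follows either from the construction in $(iii)\Rightarrow(i)$ or, more directly, from Proposition~\ref{prop_FT_Riesz_derivative}: whenever the strong Riesz derivative exists, its Fourier transform equals $|v|^\alpha\wh f(v)=\wh g(v)$ a.e., and the isometry of the Fourier transform on $\Ltwo$ implies $D^{\{\alpha\}}f=g$ a.e. I expect no substantial obstacle; the only mild subtlety is making sure Fatou (for $(ii)\Rightarrow(iii)$) and dominated convergence (for $(iii)\Rightarrow(i)$) are applied with the correct majorants, for which Lemma~\ref{la_eta_je}\,(ii)--(iii) are tailor-made. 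This also automatically shows that Definition~\ref{def_Riesz_derivative} is independent of the choice of $j\in\N$ with $2j>\alpha$, since the limit $g$ is characterized solely through $\wh g(v)=|v|^\alpha\wh f(v)$.
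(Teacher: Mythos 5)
Your proposal is correct and follows essentially the same route as the paper: (i)$\Rightarrow$(ii) is immediate, (ii)$\Rightarrow$(iii) via Fatou's lemma combined with Proposition~\ref{prop_Rae} and Lemma~\ref{la_eta_je}\,(iii), and (iii)$\Rightarrow$(i) via dominated convergence with the majorant $4|v|^{2\alpha}|\wh f(v)|^2$ supplied by Lemma~\ref{la_eta_je}\,(ii). The identification of $g$ with $D^{\{\alpha\}}f$ and the remark on independence of $j$ also match the paper's treatment.
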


\begin{proof} The implication (i)$\, \Rightarrow\,$(ii) is obvious. Now, let (ii) by satisfied. Noting Lemma~\ref{la_eta_je} (iii) and Proposition~\ref{prop_Rae}, we have by Fatou's lemma that
\begin{align*}
  &{}\bignorm{\abs{v}^\alpha\wh f(v)}_{\Ltwo}\le \liminf_{\eps \to 0+}\bignorm{\eta_{2j,\alpha, \eps}(v)\wh f(v)}_{\Ltwo}\\[2ex]
  {}={}& \liminf_{\eps \to 0+}\Bignorm{\wh{\Rae f}(v)}_{\Ltwo}=\liminf_{\eps \to 0+}\bignorm{{\Rae f}(v)}_{\Ltwo}.
\end{align*}
Since the latter term is finite by assumption, there follows (iii).

In order to prove the implication (iii)$\,\Rightarrow\,$(i), assume that $f\in \Rieszalpha$. The surjectivity of the Fourier transform yields $|v|^\alpha \wh f(v)=\wh g(v)$ for some $g\in \Ltwo$, and
\[
  \bignorm{{\Rae f}(v)-g(v)}_{\Ltwo}=\Bignorm{\wh{\Rae f}(v)-\wh g(v)}_{\Ltwo}= \Bignorm{\bkbig{\eta_{2j,\alpha,\eps}(v)-\abs{v}^\alpha}\wh f(v)}_{\Ltwo}.
\]
By Lemma~\ref{la_eta_je}\,(ii) we have
\[
  \abs{\eta_{2j,\alpha,\eps}(v)-\abs{v}^\alpha}^2\bigabs{\wh f(v)}^2\le 4\abs{v}^{2\alpha}\bigabs{\wh f(v)}^2 = 4\abs{\wh g(v)}^2\in \Lone,
\]
and hence in view of Lebesgue's dominated convergence theorem and Lemma~\ref{la_eta_je}\,(iii),
\[
  \lim_{\eps\to 0+}\Bignorm{\bkbig{\eta_{2j,\alpha,\eps}(v)-\abs{v}^\alpha}\wh f(v)}_{\Ltwo}
     =\Bignorm{\lim_{\eps\to 0+}\bkbig{\eta_{2j,\alpha,\eps}(v)-\abs{v}^\alpha}\wh f(v)}_{\Ltwo}=0.
\]
It follows that
\[
  \lim_{\eps\to 0+}\bignorm{{\Rae f}(v)-g(v)}_{\Ltwo}=0,
\]
\ie, $g$ is the strong Riesz derivative of order $\alpha$ of $f$.
\end{proof}

Observe that $\Rieszalpha$ is a normalized Banach space under the norm (see \cite[pp.~373, 381]{Butzer-Nessel_1971})
\[
  \norm{f}_{\Rieszalpha}:=\norm{f}_{\Ltwo}+\norm{g}_{\Ltwo}=\norm{f}_{\Ltwo}+\bignorm{D^{\{\alpha\}}\kern-2pt f}_{\Ltwo}.
\]

We have already seen in \eqref{eq_W_r} that the classes $\Wrtwo$ and $\Rieszr$ coincide for $r\in\N$. Furthermore, one has for $f\in W^{r,2}(\R)= \Rieszr$, in view of Propositions~\ref{propw1} and~\ref{prop_Riesz_existence_equiv} that
\[
  \bignorm{f^{(r)}}_{\Ltwo}= \bignorm{D^{\{r\}}\kern-2pt f}_{\Ltwo} = \bignorm{v^r \wh f (v)}_{\Ltwo}\qquad (r\in\N).
\]
This means that
\[
  \norm{f}_{\Rieszr}=\norm{f}_{\Ltwo}+ \bignorm{D^{\{r\}}\kern-2pt f}_{\Ltwo}= \norm{f}_{\Ltwo}+ \bignorm{f^{(r)}}_{\Ltwo},
\]
where the latter expression defines a norm on $W^{r,2}(\R)$, which is equivalent to the norm \eqref{eq_Sobolev_norm}; see \cite[p.~242]{Edmunds-Evans_1987}, \cite[Section~5.1]{Adams-Fournier_2003}. Hence the spaces $W^{r,2}(\R)$ and $\Rieszr$ are equal with equivalent norms.

As to the ordinary derivative $f^{(r)}$ and the Riesz derivative $D^{\br{r}}\kern-2pt f$ we have for their respective Fourier transforms
\begin{align*}
\wh{f^{(r)}}(v)={}& (iv)^r \wh f(v)\quad a.\,e.,\qquad
\wh{D^{\{r\}}\kern-2pt f}(v)= |v|^r\wh f(v) \quad a.\,e.
\end{align*}
It follows  that for even $r=2m$,
\[
  D^{\{2m\}}\kern-2pt f(x)=(-1)^m f^{(2m)}(x)\quad a.\,e.
\]
For odd $r=2m-1$ we have to make use of the Hilbert transform $\widetilde f$, having Fourier transform
\[
  %[\widetilde f\,]\wh{\phantom{f}}(v) \wh{f\kern2pt\widetilde{\phantom{g}}}(v)\quad \wh{\mbox{$\ds\widetilde f$\,\,}}\!\!(v)\qquad
  \wh{\widetilde f\kern3pt}\kern-3pt(v) =(-i\sgn v)\wh f (v)\quad a.e.
\]
This yields (see also \cite[p.~406]{Butzer-Nessel_1971})
\[
  D^{\{2m-1\}}f(x)
  %{f\kern1pt\widetilde{\phantom{g}}}(x)
  =(-1)^{m-1}{\widetilde f\,}^{(2m-1)}(x)= (-1)^{m-1}\widetilde{{ f^{(2m-1)}}}(x)\quad a.\,e.
\]

There exists an alternative approach to strong derivatives of integer order $r\in\N$. In this approach the role of the central difference in the definition of the Riesz derivative is clarified. A function $f\in\Ltwo$ is said to have a strong Riemann derivative of order $r\in\N$, or an $r$-th order Riemann derivative in $\Ltwo$, if there exists a function $g\in\Ltwo$ such that
\[
  \lim_{h\to 0}\biggnorm{ \frac{\overline\Delta^{r}_h}{h^r} f-g }_{\Ltwo}=0.
\]
Then the strong Riemann derivative is defined by $D^{[r]}\kern-2pt f:=g$. It is known that $f$ has an $r$-th order Riemann derivative if and only if $f\in\Wrtwo$; see \cite[pp.~385--386]{Butzer-Nessel_1971}. In this event one has $D^{[r]}\kern-2pt f=f^{(r)}$. Moreover the following five assertions are equivalent:
\begin{aufzaehlung}{$(iv)$}
\item $\ds f\in \Wrtwo$,
\item $\ds \widetilde f\in \Wrtwo$,
\item $\ds \biggnorm{\frac{\overline\Delta^{r}_h f}{h^r}}_{\Ltwo}=\Oh(1)\quad (h\to0)$,
\item $\ds \biggnorm{\frac{\overline\Delta^{r}_h \widetilde f}{h^r}}_{\Ltwo}=\Oh(1)\quad (h\to0)$,
\item there exists $g\in \Ltwo$ such that
\begin{align*}
  \ds f(x)={}&\int_{-\infty}^x du_1 \int_{-\infty}^{u_1} du_2\cdots \int_{-\infty}^{u_{r-2}}du_{r-1}
   \int_{-\infty}^{u_{r-1}}g(u_r)\du_r\\[2ex]
  ={}& \frac1{(r-1)!}\int_{-\infty}^x \frac{g(u)}{(x-u)^{1-r}}\du \quad a.\,e.,
\end{align*}
where each of the iterated integrals exists only conditionally as a function in $\Ltwo$; see \cite[p.~226]{Butzer-Nessel_1971}.
\end{aufzaehlung}

Now to the identification of the Riesz derivative with the Riemann derivative. Since
\begin{align*}
\wh{D^{\bk{r}}\kern-2pt f}(v)={}& (iv)^r \wh f(v)\quad a.\,e.,\qquad
\wh{D^{\{r\}}\kern-2pt f}(v)= |v|^r\wh f(v) \quad a.\,e.
\end{align*}
it follows that $f$ has a Riesz derivative $D^{\br{r}}\kern-2pt f $ if and only if it has a Riemann derivative $D^{\bk{r}}\kern-2pt f$, and there holds a.\,e.,
\begin{align*}
  D^{\{r\}}\kern-2pt f(x)={}&
  \begin{cases}
    (-1)^m D^{\bk{2m}}\kern-2pt f(x),& r=2m,\\[2ex]
    (-1)^{m-1}D^{\bk{2m-1}}\kern-2pt f(x)= (-1)^{m-1}\widetilde{D^{\bk{2m-1}}\kern-2pt f}(x),& r=2m-1,
  \end{cases}\\[3ex]
 ={}&
 \begin{cases}
    (-1)^m f^{(2m)}(x), & r=2m,\\[2ex]
    (-1)^{m-1}\bkbig{\widetilde f\,}^{(2m-1)}(x)= (-1)^{m-1}\widetilde{{ f^{(2m-1)}}}(x),& r=2m-1.
  \end{cases}
\end {align*}

\subsection{Lipschitz spaces}\label{sec_Lipschitz}

The modulus of smoothness of $f\in \Ltwo$ of order $r\in\N$ is defined by
\[
        \omega_r(f;\delta;\Ltwo):= \sup_{|h|\le \delta} \|\Delta_h^r f\|_{\Ltwo}\qquad(\delta>0),
\]
where
\begin{equation*}%\label{eq_difference}
       (\Delta^r_h f)(x):= \sum_{j=0}^r (-1)^{r-j} \bin{r}{j} f(x+jh)
\end{equation*}
is the forward difference of order $r$ at $x$ with increment $h$.
Some basic properties are
\begin{equation*}%\label{eq_omega_mit_faktor}
        \omega_r(f;\lambda\delta ;\Ltwo) \le (1+\lambda)^r\omega_r(f;\delta ;\Ltwo)\qquad(\lambda,\delta>0),
\end{equation*}
and for $f\in W^{s,2}(\R)$ and any $j\in\N$,
\begin{align}\label{modulus_mit_ableitung}
\omega_{s+j}(f;\delta;\Ltwo) &{}\le \delta^s \omega_j(f^{(s)}; \delta;\Ltwo)\qquad(\delta>0),\\[2ex]
\omega_{s}(f;\delta;\Ltwo) &{} \le \delta^s \bignorm{f^{(s)}}_{\Ltwo}\qquad(\delta>0);\label{modulus_mit_ableitungsnorm}
\end{align}
see e.\,g.~\cite[Chap.~2, {\S}\,7]{DeVore-Lorentz_1993}.

The Lipschitz classes based on the modulus $\omega_r$ of order $\alpha$, $0<\alpha\le r$, are defined by
\[
  \Lip_r(\alpha)=\Lip_r(\alpha;\Ltwo)
   :=\big\{f\in \Ltwo\;:\;\omega_{r}(f;\delta;\Ltwo)=\Oh(\delta^\alpha),\ \delta\to0+\big\}.
\]
$\Lip_r(\alpha)$ is a normalized Banach space under the norm (see \cite[pp.~373, 376]{Butzer-Nessel_1971})
\begin{equation}\label{eq_Lip_norm}
  \norm{f}_{\Lip_r(\alpha)}:=\norm{f}_{\Ltwo} + \sup_{h>0}\brbig{h^{-\alpha}\bignorm{\Delta^r_h f }_{\Ltwo}}.
\end{equation}
One should observe that $\Lip_r(\alpha)$ is nothing but the particular Besov space  $B^\alpha_{2\infty}$. Indeed, $B^\alpha_{2\infty}$ can be defined as the set of those $f\in\Ltwo$ with
%
%\begin{equation}\label{eq_Besov_seminorm}
%%
% \norm{f}_{B^\alpha_{2\infty}}:= \sup_{k\in\N}2^{\alpha k}\brbigg{\int_{2^{k-1}\le\abs{v}<2^{k}}\bigabs{\wh f(v)}^2\dv}^{1/2}<\infty;
%%
%\end{equation}
%
\begin{equation}\label{eq_Besov_norm}
 \norm{f}_{B^\alpha_{2\infty}}:= \sup_{k\in\N_0}2^{\alpha k}\brbigg{\int_{\R} \chi_k(v)\bigabs{\wh f(v)}^2\dv}^{1/2}<\infty,
\end{equation}
where $\chi_0$ is the characteristic function of the interval $[-1,1]$ and $\chi_k$, $k\in\N$, that of the set $\br{v\in\R\;:\;2^{k-1}\le\abs{v}<2^{k}}$; see, \eg, \cite[p.~18]{Triebel_1992}. It is well known that for $0<\alpha<r\in\N$ the Lipschitz norm \eqref{eq_Lip_norm} is equivalent to the norm defined in \eqref{eq_Besov_norm}; see \cite[p.~140]{Triebel_1992}, \cite[p.~144]{Bergh-Loefstroem_1976}.

Some basic properties of these spaces are given in

\begin{theorem}\label{thm_equiv_new}
Let $f\in \Ltwo$, $r\in\N$, $0<\beta<\alpha< r$, and $s\in\N_0$ with $s< \alpha$. Then the following statements are equivalent:

\begin{aufzaehlung}{$(viii)$}
\item $\D f\in \Lip_r(\alpha)$,
\item $\D\int_{\abs{v}\ge \sigma} \bigabs{\wh f(v)}^2\dv =\Oh\pbig{\sigma^{-2\alpha}} \quad (\sigma\to\infty)$,
%\item $\D \int_{\abs{v}\le X} |v|^{2r} \bigabs{\wh{f}(v)}^2\dv=  \Oh\pbig{X^{2r-2\alpha}} \quad (X\to \infty)$,
\item $f^{(s)} \in \Lip_r(\alpha-s)$,
\item $\D\int_{\abs{v}\ge \sigma} |v|^{2s}\bigabs{\wh f(v)}^2\dv =\Oh\pbig{\sigma^{-2(\alpha-s)}} \quad (\sigma\to\infty)$,
%\item $\D \int_{\abs{v}\le X} |v|^{2r+2s} \bigabs{\wh{f}(v)}^2\dv=  \Oh\pbig{X^{2r-2\alpha+2s}} \quad (X\to \infty)$,
\item $\D D^{\{\beta\}}\kern-2pt f \in \Lip_r(\alpha-\beta)$,
\item $\D\int_{\abs{v}\ge \sigma} |v|^{2\beta}\bigabs{\wh f(v)}^2\dv =\Oh\pbig{\sigma^{-2(\alpha-\beta)}} \quad (\sigma\to\infty)$.
%\item $\D \int_{\abs{v}\le X} |v|^{2r+2\beta} \bigabs{\wh{f}(v)}^2\dv=  \Oh\pbig{X^{2r-2\alpha+2\beta}} \quad (X\to \infty)$.
\end{aufzaehlung}
\end{theorem}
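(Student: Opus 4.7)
I will prove the core equivalence (i) $\Leftrightarrow$ (ii) first and then bootstrap to all the others by applying it with $f$ replaced by $f^{(s)}$ or by $D^{\{\beta\}}\kern-2pt f$. The two Fourier-transform identities $\wh{f^{(s)}}(v) = (iv)^s\wh f(v)$ from Proposition~\ref{propw1} and $\wh{D^{\{\beta\}}\kern-2pt f}(v) = \abs{v}^\beta\wh f(v)$ from Proposition~\ref{prop_FT_Riesz_derivative} convert the tail statements for these derivatives into the weighted tail statements (iv) and (vi) for $f$ itself.

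\textbf{Step 1: (i) $\Leftrightarrow$ (ii).} The excerpt quotes the identification $\Lip_r(\alpha)=B^\alpha_{2\infty}$ with equivalent norms for $0<\alpha<r$. Finiteness of the Besov norm \eqref{eq_Besov_norm} is equivalent to the dyadic bound $\int_{2^{k-1}\le\abs v<2^k}\bigabs{\wh f(v)}^2\dv = \Oh\pbig{2^{-2\alpha k}}$. Summing the geometric series over $k\ge\log_2\sigma$ converts this into (ii); conversely (ii) dominates each dyadic annulus, so the two conditions are equivalent.

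\textbf{Step 2: weighted tails and derivatives.} For (ii) $\Leftrightarrow$ (iv) and (ii) $\Leftrightarrow$ (vi), the direction from the weighted tail to the unweighted one is the trivial bound $\sigma^{2s}\bigabs{\wh f(v)}^2 \le \abs v^{2s}\bigabs{\wh f(v)}^2$ on $\abs v\ge\sigma$; the reverse is a dyadic decomposition of $\br{\abs v\ge\sigma}$ into annuli $A_k=\br{2^k\sigma\le\abs v<2^{k+1}\sigma}$ with
\[
\int_{A_k}\abs v^{2s}\bigabs{\wh f(v)}^2\dv
   \le \pbig{2^{k+1}\sigma}^{2s}\,\Oh\pbig{(2^k\sigma)^{-2\alpha}}
   = \Oh\pBig{(2^k\sigma)^{2s-2\alpha}},
\]
whose sum over $k\ge 0$ converges because $s<\alpha$; the same argument with $\beta<\alpha$ handles (ii) $\Leftrightarrow$ (vi). For (iii) $\Leftrightarrow$ (iv), condition (iv) forces $\abs v^s\wh f(v)\in\Ltwo$, so Proposition~\ref{propw1} yields $f\in\Wstwo$ with $\wh{f^{(s)}}(v)=(iv)^s\wh f(v)$; then (iv) is precisely (ii) applied to $f^{(s)}$ with exponent $\alpha-s\in(0,r)$, and Step~1 delivers (iii) $\Leftrightarrow$ (iv). The equivalence (v) $\Leftrightarrow$ (vi) is identical, with Proposition~\ref{prop_Riesz_existence_equiv} in place of Proposition~\ref{propw1}, exponent $\alpha-\beta\in(0,r)$, and the Fourier identity for $D^{\{\beta\}}\kern-2pt f$.

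\textbf{Main obstacle.} The only non-trivial ingredient is the Besov-norm equivalence driving Step~1, which packages the Plancherel identity $\bignorm{\Delta^r_h f}^2_{\Ltwo} = 2^{2r}\int_\R\abs{\sin(hv/2)}^{2r}\bigabs{\wh f(v)}^2\dv$ with a splitting at $\abs v=1/h$ and the usual Abel-summation argument. Once that is taken for granted, every other equivalence in the theorem reduces to the elementary dyadic bookkeeping carried out above.
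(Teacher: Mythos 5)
Your proposal is correct and follows essentially the same route as the paper: both rest on the cited norm equivalence $\Lip_r(\alpha)=B^\alpha_{2\infty}$ for (i)$\,\Leftrightarrow\,$(ii), both reduce (iii) and (v) to this base case by applying it to $f^{(s)}$ and $D^{\{\beta\}}\kern-2pt f$ via the Fourier identities of Propositions~\ref{propw1} and \ref{prop_FT_Riesz_derivative}, and both obtain the weighted tail conditions by the same dyadic geometric-series estimate (the paper phrases it as the equivalence of \eqref{eq_Besov_norm} with \eqref{eq_Besov_seminorm_2} for $\gamma=0,s,\beta$, whereas you run the annulus decomposition directly between (ii) and (iv), (vi) --- a purely organizational difference).
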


\begin{proof}

We first show that \eqref{eq_Besov_norm} is equivalent to
\begin{equation}\label{eq_Besov_seminorm_2}
   \int_{\abs{v}\ge \sigma} |v|^{2\gamma}\bigabs{\wh f(v)}^2\dv =\Oh\pbig{\sigma^{-2(\alpha-\gamma)}} \qquad (\sigma\to\infty)
\end{equation}
for any $\gamma$ such that $0\le \gamma < \alpha$. Indeed, if \eqref{eq_Besov_seminorm_2} holds, then for some constant $c>0$,
\begin{align*}
  &2^{2\gamma(j-1)}\int_{2^{j-1}\le\abs{v}<2^{j}}\bigabs{\wh f(v)}^2\dv\le \int_{2^{j-1}\le\abs{v}<2^{j}}|v|^{2\gamma}\bigabs{\wh f(v)}^2\dv\\[2ex]
  {}\le{}&
  %\sum_{k=j}^\infty \int_{2^{k-1}\le\abs{v}<2^{k}}|v|^{2\gamma}\bigabs{\wh f(v)}^2\dv
  \int_{\abs{v} \ge 2^{j-1}} |v|^{2\gamma}\bigabs{\wh f(v)}^2\dv
  %\le c\sigma^{-2(\alpha-\gamma)}
  \le c2^{-2(\alpha-\gamma)(j-1)}.
\end{align*}
This yields \eqref{eq_Besov_norm}.

For the converse, assume $\sigma\ge 1$ and choose $j\in\N$ such that $2^{j-1}\le \sigma < 2^j$. Then, if \eqref{eq_Besov_norm} holds, one has for a constant $c^\prime>0$,
\begin{align*}
  &\int_{\abs{v} \ge \sigma} |v|^{2\gamma}\bigabs{\wh f(v)}^2\dv\le \sum_{k=j}^\infty \int_{2^{k-1}\le\abs{v}<2^{k}}|v|^{2\gamma}\bigabs{\wh f(v)}^2\dv\\[2ex]
   {}\le{}& \sum_{k=j}^\infty 2^{2\gamma k} \int_{2^{k-1}\le\abs{v}<2^{k}}\bigabs{\wh f(v)}^2\dv
   \le c^\prime  \sum_{k=j}^\infty 2^{-2(\alpha-\gamma) k}=\frac{c^\prime 2^{-2(\alpha-\gamma)j}}{1-2^{-2(\alpha-\gamma)}}%\Oh\pbig{2^{-2(\alpha-\gamma)}}
   =\Oh\pbig{\sigma^{-2(\alpha-\gamma)}}
\end{align*}
as $\sigma\to\infty$, which is \eqref{eq_Besov_seminorm_2}.

It follows from the equivalence of \eqref{eq_Lip_norm}, \eqref{eq_Besov_norm} and \eqref{eq_Besov_seminorm_2} for $\gamma=0$, $\gamma=s$, and $\gamma=\beta$, respectively, that assertions (i), (ii), (iv) and (vi) are equivalent. Furthermore, applying the equivalence of (i) and (ii) to $f^{(s)}$ with Fourier transform $\wh{f^{(s)}}(v)=(iv)^s\wh f(v)$, yields (iii)\,$\Leftrightarrow$\,(iv), and (v)\,$\Leftrightarrow$\,(vi) follow by the same argument, noting Proposition~\ref{prop_Riesz_existence_equiv} and \eqref{eq_FT_Riesz}.
\end{proof}

The above proof building on Besov spaces was designed on the recommendation of one of the referees. For an alternative proof avoiding the theory of Besov spaces see  the remarks after the proof of the following Theorem~\ref{thm_oh}. For a proof of (i)\,$\Leftrightarrow$\,(ii) via the general Butzer-Scherer theorem the reader is referred to \cite{Butzer-Schmeisser-Stens_2013}.

There exists a little-oh analogue of Theorem~\ref{thm_equiv_new}. We state it in a shortened form, using the notation
\[
  \lip_r(\alpha) := \{f\in L^2(\R)\::\: \omega_r(f;\delta; L^2(\R))=o(\delta^{\alpha}), \:\delta\to 0+\}.
\]
\begin{theorem}\label{thm_oh}
Let $f\in L^2(\R)$, $r\in\N$ and $0< \alpha< r$. Then the following statements are equivalent:
\begin{aufzaehlung}{$(ii)$}
\item  $f\in \lip_r(\alpha)$,
\item  $\D\int_{\abs{v}\ge \sigma} \bigabs{\wh f(v)}^2\dv =\oh\pbig{\sigma^{-2\alpha}} \quad (\sigma\to\infty)$,
%
%\item  $\int_{\abs{v}\le X} \abs{v}^{2r} \bigabs{\wh{f}(v)}^2 dv = o(X^{2r-2\alpha}) \qquad (X\to\infty)$.
\end{aufzaehlung}
\end{theorem}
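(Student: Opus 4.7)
The plan is to extract both implications from the single Plancherel identity
\[
  \bignorm{\Delta_h^r f}_{\Ltwo}^2 \;=\; 2^{2r}\int_\R \sin^{2r}\pBig{\frac{hv}{2}}\bigabs{\wh f(v)}^2\dv,
\]
which follows from $\wh{\Delta_h^r f}(v) = (e^{ihv}-1)^r\wh f(v)$ and $\bigabs{e^{ihv}-1}=2\bigabs{\sin(hv/2)}$. The two directions will differ only in whether the weight $\sin^{2r}(hv/2)$ is bounded from below or from above.

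For (i)\,$\Rightarrow$\,(ii), I would set $h=1/\sigma$ and restrict the integral to the dyadic ring $\sigma\le|v|\le 2\sigma$, where $|hv/2|\in[1/2,1]$ and hence $\sin^{2r}(hv/2)\ge c_r:=\sin^{2r}(1/2)>0$. This yields
\[
  \int_{\sigma\le|v|\le 2\sigma}\bigabs{\wh f(v)}^2\dv \;\le\; \frac{1}{c_r\, 2^{2r}}\,\omega_r(f;1/\sigma;\Ltwo)^2.
\]
Summing over the rings $[2^k\sigma,2^{k+1}\sigma]$ for $k\ge 0$, one is reduced to showing that $\sum_{k\ge 0}\omega_r(f;1/(2^k\sigma);\Ltwo)^2=\oh(\sigma^{-2\alpha})$. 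Writing $\rho(\delta):=\omega_r(f;\delta;\Ltwo)/\delta^\alpha$ and replacing it by the non-decreasing envelope $\tilde\rho(\delta):=\sup_{0<\delta'\le\delta}\rho(\delta')$ (which still tends to $0$ as $\delta\to 0+$), the series is majorised by $\tilde\rho(1/\sigma)^2 \sigma^{-2\alpha}\sum_{k\ge 0} 2^{-2k\alpha}$, giving the required $\oh(\sigma^{-2\alpha})$ bound.

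For (ii)\,$\Rightarrow$\,(i), I would split the Plancherel integral at $|v|=1/h$ and use $\sin^{2r}(hv/2)\le (h|v|/2)^{2r}$ on the inner part and $\sin^{2r}(hv/2)\le 1$ on the outer part. With $F(\sigma):=\int_{|v|\ge\sigma}\bigabs{\wh f(v)}^2\dv$, Fubini applied to $|v|^{2r}=2r\int_0^{|v|}u^{2r-1}\du$ produces
\[
  \int_{|v|\le 1/h}|v|^{2r}\bigabs{\wh f(v)}^2\dv \;\le\; 2r\int_0^{1/h} u^{2r-1}F(u)\du.
\]
Given $\varepsilon>0$, (ii) supplies $A$ with $F(u)\le\varepsilon u^{-2\alpha}$ for $u\ge A$; splitting the $u$-integral at $A$ yields a contribution of size $\Oh(h^{2r}A^{2r})=\oh(h^{2\alpha})$ (using $r>\alpha$) together with a contribution bounded by $C\varepsilon h^{2\alpha}$. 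Combined with the outer term $F(1/h)=\oh(h^{2\alpha})$, this gives $\bignorm{\Delta_h^r f}_{\Ltwo}^2 \le (\oh(1)+C\varepsilon)h^{2\alpha}$. Since $\varepsilon$ is arbitrary, $\bignorm{\Delta_h^r f}_{\Ltwo}=\oh(h^\alpha)$. Passing to the non-increasing envelope of $u\mapsto u^{2\alpha}F(u)$ makes the bound monotone, so that the same estimate holds uniformly for $|h'|\le h$, \ie $\omega_r(f;h;\Ltwo)=\oh(h^\alpha)$.

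The main obstacle is the propagation of the little-oh hypothesis through the infinite dyadic summation in (i)\,$\Rightarrow$\,(ii) and through the integral $\int_0^{1/h}u^{2r-1}F(u)\du$ in (ii)\,$\Rightarrow$\,(i). In neither case does the pointwise $\oh$-bound suffice directly; one must first replace $\rho$, respectively $u\mapsto u^{2\alpha}F(u)$, by a monotone envelope so that a single tail estimate controls all terms at once. This technical step is precisely what distinguishes the argument from the big-$\Oh$ version of Theorem~\ref{thm_equiv_new}; everything else is a direct transcription of that proof with the weight $\sin^{2r}(hv/2)$ made explicit.
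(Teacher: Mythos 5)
Your proof is correct, and while it rests on the same central identity as the paper --- the Plancherel formula $\bignorm{\Delta_h^r f}_{\Ltwo}^2 = 2^{2r}\int_\R \sin^{2r}(hv/2)\,\bigabs{\wh f(v)}^2\dv$ together with the comparison of $\sin^{2r}(hv/2)$ against power weights --- the way you organize the two implications is genuinely different. The paper first proves that (ii) is equivalent to the low-frequency condition $\int_{\abs{v}\le\sigma}\abs{v}^{2r}\bigabs{\wh f(v)}^2\dv=\oh\pbig{\sigma^{2(r-\alpha)}}$, handling the little-oh propagation by a dyadic decomposition combined with a split of $[0,\sigma]$ at $\sigma^\kappa$ (with $\kappa<1-\alpha/r$) so that $\sup_{\sigma^\kappa\le t\le\sigma}c_0(t)\to 0$; both directions of the theorem then drop out of the two-sided bound \eqref{Delta_bounds}. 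You bypass that intermediate equivalence entirely: for (i)\,$\Rightarrow$\,(ii) you bound $\sin^{2r}(hv/2)$ from \emph{below} on the ring $\sigma\le\abs{v}\le 2\sigma$ with $h=1/\sigma$ and sum dyadic rings, propagating the little-oh via the monotone envelope of $\omega_r(f;\delta;\Ltwo)/\delta^\alpha$; for (ii)\,$\Rightarrow$\,(i) you replace the dyadic sum by the layer-cake identity $\abs{v}^{2r}=2r\int_0^{\abs{v}}u^{2r-1}\du$ and an $\varepsilon$--$A$ split of $\int_0^{1/h}u^{2r-1}F(u)\du$, where the condition $r>\alpha$ enters to kill the $\Oh(h^{2r}A^{2r})$ term. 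Both devices correctly address the real difficulty (a pointwise $\oh$-bound does not pass through an infinite sum or an integral without a monotonicity or uniformity argument), and your version has the merit of being slightly more direct --- in particular your (i)\,$\Rightarrow$\,(ii) makes no use of low frequencies at all --- while the paper's intermediate condition \eqref{eq_equivalence_to_ii} is reusable and mirrors the structure of the big-$\Oh$ Theorem~\ref{thm_equiv_new}. The final passage from $\bignorm{\Delta_h^r f}_{\Ltwo}=\oh(h^\alpha)$ to $\omega_r(f;h;\Ltwo)=\oh(h^\alpha)$, which you flag via an envelope, is the elementary observation that $\sup_{0<h'\le h}(h')^{-2\alpha}\bignorm{\Delta_{h'}^r f}_{\Ltwo}^2\to 0$ implies $\omega_r(f;h;\Ltwo)^2\le h^{2\alpha}\cdot\oh(1)$; the paper treats this as implicit as well, so no gap there.
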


\begin{proof}
First we show that (ii) is equivalent to
\begin{equation}\label{eq_equivalence_to_ii}
  \int_{\abs{v}\le \sigma} \abs{v}^{2r} \bigabs{\wh{f}(v)}^2 dv = o\pbig{\sigma^{2(r-\alpha)})}\qquad (\sigma\to\infty).
\end{equation}

Indeed, assume that (ii) holds, then
\begin{equation}\label{eq_little_oh}
  \int_{\abs{v}\ge\sigma} \bigabs{\wh{f}(v)}^2 dv\,\le\, \frac{c_0(\sigma)}{\sigma^{2\alpha}}\qquad (\sigma>0),
\end{equation}
where $c_0(\cdot)$ is a non-negative function on $(0,\infty)$ such that $c_0(\sigma)\to0$ as $\sigma\to\infty$.
%implies
%%
%\begin{equation}\label{eq_equiv_lip}
%%
%  \int_{\abs{v}\le \sigma} \abs{v}^{2r} \bigabs{\wh{f}(v)}^2 dv = o\pbig{\sigma^{2(r-\alpha)}} \qquad (\sigma\to\infty).
%%
%\end{equation}.
%
Now choose $\kappa \in (0, 1)$ such that $\kappa< 1-\alpha/r$ and split the integral in \eqref{eq_equivalence_to_ii} as follows:
\[
  \int_{\abs{v}\le \sigma} v^{2r} \bigabs{\wh{f}(v)}^2 \dv
  =\int_{\abs{v}\le \sigma^\kappa}v^{2r} \bigabs{\wh{f}(v)}^2 dv + \int_{\sigma^\kappa
  \le \abs{v}\le \sigma}v^{2r} \bigabs{\wh{f}(v)}^2\dv =: \mathcal{I}_1 + \mathcal{I}_2.
\]
Clearly,
\[
  \mathcal{I}_1 \le \sigma^{2r\kappa} \bignorm{\wh{f}\,}_{\Ltwo}^2 = o\pbig{\sigma^{2(r-\alpha)}} \qquad (\sigma\to\infty)
\]
by our choice of $\kappa$.

Concerning $\mathcal{I}_2$, let
\[
    M_k:=\brbig {v\in [\sigma^\kappa, \sigma]\;:\; \sigma2^{-k-1}\le \abs v< \sigma2^{-k}}\qquad (k\in\N_0).
\]
Then \eqref{eq_little_oh} allows us to conclude that for any $\sigma>0$ and $k\in\N_0$ we have
\begin{align*}
\int_{M_k} v^{2r} \bigabs{\wh{f}(v)}^2 \dv {}\le{} (2^{-k}\sigma)^{2r}\int_{M_k} \bigabs{\wh{f}(v)}^2 \dv
  {}\le{} \sup_{\sigma^\kappa \le t \le \sigma} c_0(t)\, 4^{\alpha} (2^{-k}\sigma)^{2(r-\alpha)}.
\end{align*}
Thus
\begin{align*}
\mathcal{I}_2 \le \sum_{k=0}^\infty \int_{M_k} v^{2r} \bigabs{\wh{f}(v)}^2\dv
{}\le{} \sup_{\sigma^\kappa \le t \le \sigma} c_0(t)\, 4^\alpha \sigma^{2(r-\alpha)} \sum_{k=0}^\infty \rez{4^{(r-\alpha)k}}
=  \oh\pbig{\sigma^{2(r-\alpha)}},
\end{align*}
which shows that \eqref{eq_equivalence_to_ii} holds.

Conversely, \eqref{eq_equivalence_to_ii}
%
%Conversely, suppose that (iii) holds. Then there exists a constant $c_1>0$ such that
%%
%\[
% \int_{\abs{v}\le X} v^{2r} \bigabs{\wh{f}(v)}^2 \dv \le c_1\, X^{2r-2\alpha} \qquad (X>0).
%\]
%%
allows us to conclude that for any $\sigma>0$ and $k\in\N_0$ we have
\begin{align*}
\int_{2^k\sigma\le\abs{v}< 2^{k+1}\sigma} \bigabs{\wh{f}(v)}^2 \dv \le {}&
             (2^k\sigma)^{-2r}\int_{2^k\sigma\le\abs{v}< 2^{k+1}\sigma} |v|^{2r}\bigabs{\wh{f}(v)}^2 \dv\\[2ex]
{}\le{}& \sup_{t\ge\sigma}c_0(t)\, 2^{2(r-\alpha)} (2^k\sigma)^{-2\alpha}.
\end{align*}
Thus
\begin{align*}
\int_{\abs{v}\ge \sigma} \bigabs{\wh{f}(v)}^2 dv ={}& \sum_{k=0}^\infty \int_{2^k\sigma\le\abs{v}< 2^{k+1}\sigma}
                        \bigabs{\wh{f}(v)}^2 \dv\\[2ex]
{}\le{}& \sup_{t\ge\sigma}c_0(t) \, \sigma^{-2\alpha} 4^{r-\alpha}\sum_{k=0}^\infty \rez{4^{\alpha k}} = \oh\pbig{\sigma^{-2\alpha}},
\end{align*}
which is (ii).

Now we are ready for the actual proof. Noting that the Fourier transform of the $r$-th order difference $\Delta ^r_h f$ equals
\begin{equation*}
  (\Delta ^r_h f)\wh{\phantom{t}}(v) = \sum_{k=0}^r (-1)^{r-k} \bin{r}{k}e^{ikhv} \widehat f(v)
  =\big(e^{ihv}-1\big)^r\widehat f(v)\qquad (v\in \R),
\end{equation*}
we have by the isometry of the $L^2(\R)$ Fourier transform,
\begin{equation}\label{eq_Delta_norm}
  \bignorm{\Delta_h^rf}_{\Ltwo}^2=
  \bignorm{\big(e^{ihv}-1\big)^r\widehat f(v)}_{\Ltwo}^2 = 2^{2r} \int_\R \Bigabs{\wh{f}(v) \sin^r\Big(\frac{hv}{2}\Big)}^2 \dv\qquad (h>0).
\end{equation}

Since $2\abs{x}/\pi \le \abs{\sin x} \le \abs{x}$ for $\abs{x}\le\pi/2$, we find that for $h>0$,
\begin{align}
      \li(\frac{2}{\pi}\ri)^{2r} \int_{\abs{v}\le\pi/h}\bigabs{\wh{f}(v)}^2& (hv)^{2r} \dv
                                                        \le \bignorm{\Delta_h^rf}_{\Ltwo}^2 \notag\\[2ex]
{}\le{}& \int_{\abs{v}\le\pi/h} \bigabs{\wh{f}(v)}^2 (hv)^{2r} \dv
                                                + 2^{2r} \int_{\abs{v}\ge \pi/h} \bigabs{\wh{f}(v)}^2 \dv.
\label{Delta_bounds}
\end{align}

Suppose that (i) holds. Then the left-hand inequality implies
%Since $2\abs{x}/\pi \le \abs{\sin x} \le \abs{x}$ for $\abs{x}\le\pi/2$, we find that
%%
%\begin{align}
%      \li(\frac{2}{\pi}\ri)^{2r} \int_{\abs{v}\le\pi/h} \bigabs{\wh{f}(v)}^2 (hv)^{2r} \dv
%                                                        \le{} \bignorm{\Delta_h^rf}_{\Ltwo}^2 =\oh\pbig{h^{2\alpha}}\qquad(h\to0+),
%%
%\label{Delta_bounds_new}
%\end{align}
%
\eqref{eq_equivalence_to_ii}, which is equivalent to (ii), as shown above.

%\begin{align}
%      \li(\frac{2}{\pi}\ri)^{2r} \int_{\abs{v}\le\pi/h}\bigabs{\wh{f}(v)}^2& (hv)^{2r} \dv
%                                                        \le \bignorm{\Delta_h^rf}_{\Ltwo}^2 \notag\\[2ex]
%%
%{}\le{}& \int_{\abs{v}\le\pi/h} \bigabs{\wh{f}(v)}^2 (hv)^{2r} \dv
%                                                + 2^{2r} \int_{\abs{v}\ge \pi/h} \bigabs{\wh{f}(v)}^2 \dv.
%\label{Delta_bounds_new_1}
%\end{align}

As to the converse, if (ii) holds, then the equivalence of (ii) and \eqref{eq_equivalence_to_ii} yields that the two integrals on the right-hand side of \eqref{Delta_bounds} are of order $\oh\p{h^{2\alpha}}$ giving (i).
\end{proof}

As to the proof, the authors were inspired by two deep theorems of Titchmarsh \cite[Theorems~84, 85]{Titchmarsh_1948}, \cite{Titchmarsh_1927}, who in turn writes ``The analysis originated with ideas of Bernstein and Szasz on Fourier series''. Titchmarsh had shown among others that for $0<\alpha<1$ the conditions $\int_\R
|f(x+h)-f(x-h)|^2dx=\Oh(|h|^{2\alpha})$ and $\int_{|v|\ge X}|\wh f(v)|^2dv=\Oh(X^{-2\alpha})$, $X\to\infty$, are equivalent. In the foregoing proof we extendend Titchmarsh's ideas to higher order differences in order to get rid of the restriction $\alpha<1$. By an obvious modification of this proof one can give another proof of Theorem~\ref{thm_equiv_new} without using the theory of function spaces.

In order to compare the space $\lip_r(\alpha)$ with $\Rieszalpha$ we will need the following proposition.

\begin{proposition}\label{prop_Riesz_and_oh}
Let $m\in\N_0$ and $\alpha>0$. If $f^{(m)}\in\Rieszalpha$, then
\begin{equation}\label{Riesz1}
   \int_{\abs{v}\ge \sigma} \bigabs{\wh f(v)}^2\dv =\oh\pbig{\sigma^{-2\alpha-2m}} \qquad (\sigma\to\infty).
\end{equation}
\end{proposition}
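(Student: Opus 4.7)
The plan is to reduce the statement to a single Fourier-side assertion and then use a one-line tail estimate for $L^2$ functions.

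First I would translate the hypothesis $f^{(m)}\in\Rieszalpha$ into a condition on $\wh f$. Since $f^{(m)}\in L^2(\R)$, Proposition~\ref{propw1} gives $f\in W^{m,2}(\R)$ and $\wh{f^{(m)}}(v)=(iv)^m\wh f(v)$ a.\,e.; combining with the equivalence (i)$\,\Leftrightarrow\,$(iii) of Proposition~\ref{prop_Riesz_existence_equiv} applied to $f^{(m)}$, the hypothesis is equivalent to
\[
   |v|^{\alpha+m}\wh f(v) \in L^2(\R),
\]
i.\,e., $\int_\R |v|^{2(\alpha+m)}|\wh f(v)|^2\dv<\infty$.

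Next I would exploit the trivial bound $\sigma^{2(\alpha+m)}\le |v|^{2(\alpha+m)}$ for $|v|\ge \sigma$, which gives
\[
  \sigma^{2\alpha+2m}\int_{\abs{v}\ge\sigma}\bigabs{\wh f(v)}^2\dv
  \le \int_{\abs{v}\ge\sigma}|v|^{2(\alpha+m)}\bigabs{\wh f(v)}^2\dv.
\]
Since the integrand on the right is in $L^1(\R)$, absolute continuity of the Lebesgue integral forces the right-hand side to tend to $0$ as $\sigma\to\infty$. Dividing through by $\sigma^{2\alpha+2m}$ yields the desired little-oh estimate \eqref{Riesz1}.

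There is essentially no obstacle: the argument is a routine tail bound. The only point that deserves a word is the identification $\wh{f^{(m)}}(v)=(iv)^m\wh f(v)$, which is exactly what Proposition~\ref{propw1} supplies once we know $f^{(m)}\in L^2(\R)$; this is why the hypothesis $f^{(m)}\in\Rieszalpha$ (and not merely the existence of a Riesz derivative of $f$ of order $\alpha+m$) is the correct vehicle for the statement.
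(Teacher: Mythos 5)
Your proof is correct and follows essentially the same route as the paper: both arguments reduce the hypothesis to the integrability of $|v|^{2(\alpha+m)}|\wh f(v)|^2$ and then apply the same tail bound $\sigma^{2\alpha+2m}\int_{|v|\ge\sigma}|\wh f(v)|^2\dv\le\int_{|v|\ge\sigma}|v|^{2(\alpha+m)}|\wh f(v)|^2\dv=\oh(1)$. Your additional remarks on the identification $\wh{f^{(m)}}(v)=(iv)^m\wh f(v)$ merely make explicit what the paper compresses into ``by the definition of $\Rieszalpha$''.
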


\begin{proof} By the definition of $\Rieszalpha$ we know that $\int_\R \abs{v}^{2\alpha}\bigabs{v^m\wh{f}(v)}^2 dv$ exists. Hence% given $\eps>0$, there exists a $\sigma_0 >0$ such that
\[
  \sigma^{2\alpha+2m} \int_{\abs{v}\ge \sigma} \bigabs{\wh{f}(v)}^2 \dv
  \le \int_{\abs{v}\ge\sigma} \abs{v}^{2\alpha+2m} \bigabs{\wh{f}(v)}^2\dv =\oh(1)\qquad (\sigma\to \infty),
\]
which shows that \eqref{Riesz1} holds.
\end{proof}

\begin{corollary}\label{cor_Lip_inclusions}
For $r \in\N$ and $0<\beta<\alpha<r$ we have
\begin{equation}\label{eq_Lip_inclusions}
  \Rieszalpha\subsetneqq \lip_r (\alpha) \subsetneqq \Lip_r(\alpha)\subsetneqq \Rieszbeta.
\end{equation}
\end{corollary}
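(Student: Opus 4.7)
The plan is to derive all three inclusions as direct consequences of the Fourier-side characterizations already established in Theorem~\ref{thm_equiv_new}, Theorem~\ref{thm_oh}, and Proposition~\ref{prop_Riesz_and_oh}, and then to verify strictness by prescribing explicit decay profiles for $\wh f$ at infinity.

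\textbf{Inclusions.} For the leftmost inclusion, I would apply Proposition~\ref{prop_Riesz_and_oh} with $m=0$: if $f\in\Rieszalpha$, then $\int_{|v|\ge\sigma}|\wh f(v)|^2\dv = \oh\pbig{\sigma^{-2\alpha}}$, and by Theorem~\ref{thm_oh} this is equivalent to $f\in\lip_r(\alpha)$. The middle inclusion $\lip_r(\alpha)\subseteq \Lip_r(\alpha)$ is immediate from the definitions, since $\oh(\delta^\alpha)$ implies $\Oh(\delta^\alpha)$. For the rightmost inclusion, if $f\in\Lip_r(\alpha)$, then the equivalence (i)$\Leftrightarrow$(vi) of Theorem~\ref{thm_equiv_new}, applied at $\sigma=1$, gives $\int_{|v|\ge 1}|v|^{2\beta}|\wh f(v)|^2\dv < \infty$; combined with the trivial bound $\int_{|v|<1}|v|^{2\beta}|\wh f(v)|^2\dv \le \|f\|_{\Ltwo}^2$, this yields $|v|^\beta\wh f(v)\in\Ltwo$, i.e., $f\in\Rieszbeta$.

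\textbf{Strictness.} For each of the three inclusions I would exhibit an even function $\wh f\in\Ltwo$ specified by its tail behaviour, and then invoke the characterizations above. To separate $\Rieszalpha$ from $\lip_r(\alpha)$, take $|\wh f(v)|^2 = |v|^{-2\alpha-1}(\log|v|)^{-1}$ for $|v|\ge 2$ and arbitrary $\Ltwo$-behaviour near the origin; a short integration shows $\int_\sigma^\infty|\wh f|^2\dv \sim c\,\sigma^{-2\alpha}/\log\sigma = \oh(\sigma^{-2\alpha})$ while $\int_\R |v|^{2\alpha}|\wh f(v)|^2\dv$ diverges logarithmically, so by Theorem~\ref{thm_oh} the function belongs to $\lip_r(\alpha)\setminus\Rieszalpha$. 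To separate $\lip_r(\alpha)$ from $\Lip_r(\alpha)$, take $|\wh f(v)|^2 = |v|^{-2\alpha-1}$ for $|v|\ge 2$; the tail is of exact order $\sigma^{-2\alpha}$, which is $\Oh$ but not $\oh$, so Theorems~\ref{thm_equiv_new} and \ref{thm_oh} place $f$ in $\Lip_r(\alpha)\setminus\lip_r(\alpha)$. Finally, to separate $\Lip_r(\alpha)$ from $\Rieszbeta$, pick $\eps$ with $0<\eps<2(\alpha-\beta)$ and take $|\wh f(v)|^2 = |v|^{-2\beta-1-\eps}$ for $|v|\ge 2$; then $\int_\R|v|^{2\beta}|\wh f(v)|^2\dv < \infty$, so $f\in\Rieszbeta$, but the tail $\int_\sigma^\infty|\wh f|^2\dv$ behaves like $\sigma^{-2\beta-\eps}$, which is not $\Oh(\sigma^{-2\alpha})$ by our choice of $\eps$, so $f\notin\Lip_r(\alpha)$ by Theorem~\ref{thm_equiv_new} (i)$\Leftrightarrow$(ii).

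\textbf{Main obstacle.} The inclusions are essentially immediate corollaries of the preceding theorems; the real bookkeeping lies in the strictness examples, where one must calibrate the decay rates (including the logarithmic correction in the first case) so that precisely one of the two relevant integrals diverges while the other satisfies the targeted little-oh or big-Oh estimate. None of the integrals is difficult, but the logarithmic gap between $\Rieszalpha$ and $\lip_r(\alpha)$ is the most delicate point, since a pure power decay is too coarse to distinguish these two spaces.
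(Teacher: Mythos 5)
Your proposal is correct, and for the three inclusions it follows the paper's route almost verbatim: the paper also obtains $\Rieszalpha\subset\lip_r(\alpha)$ from Proposition~\ref{prop_Riesz_and_oh} with $m=0$ (together with Theorem~\ref{thm_oh}) and gets the middle inclusion from the definitions. The two genuine differences are worth noting. First, for $\Lip_r(\alpha)\subset\Rieszbeta$ the paper simply cites the classical Besov--Sobolev embedding from the literature, whereas you derive it internally from Theorem~\ref{thm_equiv_new}\,(i)$\Leftrightarrow$(vi); that is a legitimate and more self-contained argument (the only cosmetic point is that (vi) is an asymptotic statement, so one should say that the $\Oh$-bound for large $\sigma$ plus $\int_{1\le|v|\le\sigma_0}|v|^{2\beta}|\wh f(v)|^2\dv\le\sigma_0^{2\beta}\norm{f}_{\Ltwo}^2$ gives finiteness of $\int_{|v|\ge1}|v|^{2\beta}|\wh f(v)|^2\dv$, rather than literally "evaluating at $\sigma=1$"). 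Second, for strictness the paper points to Propositions~\ref{prop_counter_allgemein} and \ref{prop_counter_last}, whose counterexamples are built on the time side as sums $\sum_n a_n\sinc^2(b_nt)e^{ic_nt}$ with disjointly supported Fourier transforms; your examples instead prescribe $\wh f$ directly, with exactly the same decay profiles ($|v|^{-2\alpha-1}(\log|v|)^{-1}$, $|v|^{-2\alpha-1}$, and $|v|^{-2\beta-1-\eps}$ match $f_{\alpha+1,1/2}$, $f_{\alpha+1,0}$ and $f_\gamma$ respectively). Since the Fourier transform is a bijection of $\Ltwo$ onto itself and the corollary makes no continuity or sampling demands, your direct prescription is perfectly adequate here and shorter; the paper's heavier constructions exist because the same functions must also witness strictness in chains involving $C(\R)$, $S_h^2$ and $M^{2,1}$. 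Your identification of the logarithmic correction as the delicate point in separating $\Rieszalpha$ from $\lip_r(\alpha)$ is exactly right and mirrors the role of the factor $\ln^{1/2}n$ in the paper's $f_{\alpha+1,1/2}$.
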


\begin{proof}
The first inclusion follows from Proposition~\ref{prop_Riesz_and_oh} for $m=0$, and the second one from the definition of the Lipschitz spaces involved. The rightmost inclusion is a well-known relation between Besov and Sobolev spaces; see, \eg, \cite{Toft_2004}.
%
% On the other hand, by Theorem~\ref{thm_equiv_new}, $f\in \Lip_r(\alpha)$ implies
%\[
%   \int_{\abs{v}\ge \sigma} |v|^{2\beta}\bigabs{\wh f(v)}^2\dv =\Oh\pbig{\sigma^{-2\alpha+2\beta}} \qquad (\sigma\to\infty),
%\]
%%
%yielding $f\in\Rieszbeta$.

For the fact that equality cannot hold in \eqref{eq_Lip_inclusions}, we refer to Propositions~\ref{prop_counter_allgemein} and \ref{prop_counter_last} in Section~\ref{sec_counterexamples} below.
\end{proof}

\subsection{Wiener amalgam and modulation spaces}\label{sec_Wiener_amalgam_spaces}

For $p, q \in [1,\infty]$ the \emph{Wiener amalgam space} $W(L^p, \ell^q)$ comprises all measurable, locally integrable functions
$f\,:\,\R\to\C$ such that
$$
    \|f\|_{p,q} :=
     \Bigg\{\sum_{n\in\Z}\biggbr{\int_n^{n+1} \abs{f(t)}^p\,dt}^{q/p}\Bigg\}^{1/q} = \Bignorm{\pBig{\norm{f}_{L^p(n,n+1)}}_{n\in\Z}}_{\ell^q}< \infty
$$
with the usual convention applying when $p$ or $q$ is infinite.

The idea of considering $W(L^p, \ell^q)$, as opposed to the space $L^p(\R)= W(L^p, \ell^p)$, is a natural one as it allows one to separate the global from the local behaviour of a function. The idea goes back to N.~Wiener \cite{Wiener_1932} who had considered such special cases  as $W(L^1, \ell^2)$, $W(L^2, \ell^\infty)$. F.~Holland \cite{Holland_1974,Holland_1975} undertook the first systematic study of the general case in 1975. L.~Cooper \cite{Cooper_1960} had met amalgams in his earlier work on positive definite functions. For an excellent, understandable, survey on amalgams on $\R$ and on groups $G$ see Fournier and Stewart%
\footnote{Finbarr Holland but also Jim Stewart are doctoral students of Lionel Cooper, the former at  Cardiff, the latter at Toronto. Hans Feichtinger, who learned to know Stewart as well as John Fournier in Canada in 1986, recalled that Maria Luisa Torres (Athabasca Univ.) received her doctorate under Stewart at McMaster in 1985, the thesis being on amalgams. Thus she is an academic granddaughter of Lionel.} \cite{Fournier-Stewart_1985}; the latter had introduced them in \cite{Stewart_1979}. For a multivariate version see \cite{Feichtinger-Weisz_2007}.

General connections between $W(L^p, \ell^q)$ and $L^p(\R)$ read (see \cite[(2.3), (2.4)]{Fournier-Stewart_1985}),
\begin{align}
  W(L^p, \ell^q)\subset L^p(\R)\cap L^q(\R)\qquad (q\le p),\label{eq_general1}\\[1.2ex]
  W(L^p, \ell^q)\supset L^p(\R)\cap L^q(\R)\qquad (q\ge p).\label{eq_general2}
\end{align}

A first main result in this respect is the dilation invariance of the spaces $W(L^p, \ell^q)$, i.\,e., $f$ belongs to $W(L^p, \ell^q)$ if and only if $f_\lambda:=f(\lambda\cdot)$, where $\lambda\neq 0$, does. For $W(L^p, \ell^p)=L^p(\R)$ this follows by a change of variable $u\to\lambda^{-1}u$, since $\norm{f_\lambda}_{p,p}=\lambda^{-1/p}\norm{f}_{p,p}$. For the spaces $W(L^p, \ell^q)$ with $p\ne q$ this is by no means as simple. The following proposition is a particular case of a very general result; see \cite{Feichtinger_1983,Sugimoto-Tomita_2007,Cordero-Okoudjou_2012}.
\begin{proposition}\label{prop_dilation_invariance}
Let $p,q \in [1,\infty]$ and $\lambda>0$; then for each measurable $f\colon \R\to\C$,
\begin{align}
 2^{-1/q}3^{-1}\lambda^{-1/p-1/q} \norm{f}_{p,q}&\le \norm{f_\lambda}_{p,q} \le 3\lambda^{1-1/p} \norm{f}_{p,q}
            \qquad (\lambda\ge 1),\label{eq_dilation_invariance_1}\\[2ex]
 3^{-1}\lambda^{1-1/p} \norm{f}_{p,q} &\le \norm{f_\lambda}_{p,q} \le 2^{1/q}3 \lambda^{-1/p-1/q} \norm{f}_{p,q}
             \qquad (0< \lambda < 1).\label{eq_dilation_invariance_2}
\end{align}
\end{proposition}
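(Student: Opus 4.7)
I would open with the substitution formula
\[
  \|f_\lambda\|_{L^p(n,n+1)}\;=\;\lambda^{-1/p}\,\|f\|_{L^p(\lambda n,\,\lambda(n+1))},
\]
which for finite $p$ follows from the change of variable $s=\lambda t$ in $\int_n^{n+1}|f(\lambda t)|^p\dt$, and which holds in the $p=\infty$ case with the convention $\lambda^{0}=1$. This reduces the proposition to comparing the $\ell^q$-norm of the block norms $A_n:=\|f\|_{L^p(\lambda n,\lambda(n+1))}$ against the $\ell^q$-norm of the integer-block norms $b_k:=\|f\|_{L^p(k,k+1)}$. Introducing the index set
\[
   I_n\,:=\,\brbig{k\in\Z\,:\,[k,k+1]\cap[\lambda n,\lambda(n+1)]\ne\emptyset},
\]
Minkowski's inequality applied to the decomposition of $f\cdot\chi_{[\lambda n,\lambda(n+1)]}$ into its restrictions to the $[k,k+1]$ gives $A_n\le\sum_{k\in I_n}b_k$.

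The heart of the argument is a pair of combinatorial counts. A direct inspection of the endpoints of $[\lambda n,\lambda(n+1)]$ yields $|I_n|\le\lambda+1$ for every $n$, while for fixed $k$ the set $\{n:k\in I_n\}$ is contained in an open interval of length $1/\lambda+1$ and therefore has at most $\lfloor 1/\lambda\rfloor+2$ elements. In the regime $\lambda\ge 1$ these simplify to $|I_n|\le 2\lambda$ and $\#\{n:k\in I_n\}\le 3$; in the regime $0<\lambda<1$ they read $|I_n|\le 2$ and $\#\{n:k\in I_n\}\le 3/\lambda$.

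Now apply Hölder's inequality $\pbig{\sum_{k\in I_n}b_k}^q\le|I_n|^{q-1}\sum_{k\in I_n}b_k^q$ (valid for $q\in[1,\infty)$, with the obvious sup-convention for $q=\infty$), sum over $n$, and swap the order of summation to get
\[
   \|f_\lambda\|_{p,q}^q\;\le\;\lambda^{-q/p}\,|I_n|_{\max}^{q-1}\,\#\{n:k\in I_n\}_{\max}\,\|f\|_{p,q}^q.
\]
Substituting the two pairs of combinatorial bounds and taking $q$-th roots produces, after dominating the mixed constants $3^{1/q}2^{1-1/q}$ by $3$ (for $\lambda\ge1$) and by $2^{1/q}\cdot3$ (for $0<\lambda<1$), precisely the upper estimates in \eqref{eq_dilation_invariance_1} and \eqref{eq_dilation_invariance_2}.

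The lower estimates follow by duality. Given $\lambda\ge1$, set $g=f_\lambda$ so that $g_{1/\lambda}=f$; applying the upper bound of \eqref{eq_dilation_invariance_2} with $\mu=1/\lambda\in(0,1)$ to $g$ yields $\|f\|_{p,q}\le 2^{1/q}\,3\,\lambda^{1/p+1/q}\|f_\lambda\|_{p,q}$, which is exactly the lower bound in \eqref{eq_dilation_invariance_1}; a symmetric maneuver using the upper bound of \eqref{eq_dilation_invariance_1} delivers the lower bound in \eqref{eq_dilation_invariance_2}. I expect the main nuisance to be the bookkeeping: getting the combinatorial counts $|I_n|$ and $\#\{n:k\in I_n\}$ sharp enough to recover the stated constants, and carrying the $p=\infty$ or $q=\infty$ endpoints through uniformly; the Hölder/Minkowski inputs themselves are routine.
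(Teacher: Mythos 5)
The paper offers no proof of this proposition at all --- it simply cites Feichtinger, Sugimoto--Tomita and Cordero--Okoudjou for a ``very general result'' --- so your direct, self-contained argument is a genuinely different and more elementary route, and its architecture (rescale each block by $\lambda^{-1/p}$, compare the rescaled partition $\{[\lambda n,\lambda(n+1)]\}$ with the integer partition via the overlap sets $I_n$, then H\"older in the form $\bigl(\sum_{k\in I_n}b_k\bigr)^q\le |I_n|^{q-1}\sum_{k\in I_n}b_k^q$ followed by an interchange of summation, and finally duality $\lambda\leftrightarrow 1/\lambda$ for the lower bounds) is sound and does deliver the stated inequalities, endpoints $p=\infty$, $q=\infty$ included.

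There is one concrete slip in the bookkeeping you yourself flagged as the danger spot: the count $|I_n|\le\lambda+1$ is false. The set $I_n$ is indexed by the integers in the open interval $(\lambda n-1,\lambda(n+1))$ of length $\lambda+1$, and an open interval of length $L$ can contain up to $\lceil L\rceil$ integers; so the correct uniform bound is $|I_n|\le\lceil\lambda\rceil+1\le\lambda+2$, not $\lambda+1$. (Concretely, for $\lambda=1.4$ and $n=2$ one has $I_2=\{2,3,4\}$, so $|I_2|=3>2.4=\lambda+1$ and also $>2.8=2\lambda$, refuting your simplification $|I_n|\le 2\lambda$ for $\lambda\ge1$.) Note that you applied the correct $\lceil L\rceil$-type count to the dual quantity $\#\{n:k\in I_n\}$ but not to $|I_n|$. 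Fortunately the repair costs nothing: for $\lambda\ge1$ use $|I_n|\le\lambda+2\le 3\lambda$ together with $\#\{n:k\in I_n\}\le\lceil 1/\lambda+1\rceil\le 3$, which gives
\[
  \norm{f_\lambda}_{p,q}\;\le\;\lambda^{-1/p}\,(3\lambda)^{1-1/q}\,3^{1/q}\,\norm{f}_{p,q}
  \;=\;3\,\lambda^{1-1/p-1/q}\norm{f}_{p,q}\;\le\;3\,\lambda^{1-1/p}\norm{f}_{p,q},
\]
landing exactly on the constant $3$ in \eqref{eq_dilation_invariance_1}; the case $0<\lambda<1$, where your counts $|I_n|\le 2$ and $\#\{n:k\in I_n\}\le 1/\lambda+2\le 3/\lambda$ are correct, and the duality step for the lower bounds go through unchanged. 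With that single correction the proof is complete.
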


It was pointed out by H.\,G.\ Feichtinger that there is a relationship between $F^2\cap S_1^2$ and $W(L^2, \ell^1)$. In order to make this more precise, we consider the \emph{modulation space} \cite{Feichtinger_1983,Triebel_1983,Groechenig_2001,Feichtinger_2006}%
\footnote{\label{footnote_hans}%
At the workshop \emph{From Abstract to Computational Harmonic Analysis}, held at Strobl (Austria, June 13--19, 2011), conducted by Hans Feichtinger, he informed two of us that in the terminology favoured by his Numerical Harmonic Analysis Group (NuHAG), the space occurring on the right-hand side of \eqref{hans} can be classified as a modulation space and should then be denoted by $M^{2,1}$.}
\begin{equation}\label{hans}
   M^{2,1}=M^{2,1}(\R) := \brBig{f\;:\; f:=\wh{g},\, \, g\in W(L^2, \ell^1)}
\end{equation}
with the norm
\begin{equation}\label{eq_classical_modulation_norm}
    \|f\|_{M^{2,1}} := \sum_{n\in\Z}\biggbr{\int_n^{n+1} \bigabs{\wh{f}(v)}^2\,dv}^{1/2}
   = \Bignorm{\pBig{\bignorm{\wh f\,}_{L^2(n,n+1)}}_{n\in\Z}}_{\ell^1}.
\end{equation}
Thus the elements $f$ of $M^{2,1}$ are exactly the Fourier transforms of the elements $g$ in the amalgam space $W(L^2,\ell^1)$. In fact, Feichtinger was the first to introduce general modular spaces $M^{p,q}(G)$ in Oberwolfach \cite{Feichtinger_1981}.

Since the function $g$ in \eqref{hans} belongs to $W(L^2, \ell^1)\subset \Ltwo\cap \Lone$, it follows that the Fourier transform $\wh g$ can be understood in $\Lone$-sense, which implies that the elements of $M^{2,1}$ can be regarded as continuous $\Ltwo$-functions. Further, since $g(v)=\wh f(-v)$, the modulation space can be equivalently defined as
\begin{equation}\label{eq_hans_2}
   M^{2,1} = \brBig{f\in \Ltwo\cap C(\R) \;:\; \wh f \in W(L^2, \ell^1)}.
\end{equation}

\begin{proposition}\label{propa1}
For each $r\in \N$, $1/2<\alpha<r$ and $h>0$ there holds the inclusion chain
\begin{equation}\label{eq_Walpha_inclusion}
 \Wrtwo \cap C(\R)\subsetneqq \Rieszalpha\cap C(\R) \subsetneqq\ M^{2,1} \subsetneqq F^2\cap S_h^2.
\end{equation}
\end{proposition}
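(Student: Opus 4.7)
The plan is to establish the four inclusions from left to right, treating strictness by appealing to the counterexamples of Section~\ref{sec_counterexamples} (Propositions~\ref{prop_counter_allgemein} and \ref{prop_counter_last}) which the paper provides for exactly such purposes.

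For the leftmost inclusion $W^{r,2}\cap C(\R)\subsetneqq\Rieszalpha\cap C(\R)$, I invoke Proposition~\ref{propw1} together with the definition of $\Rieszalpha$. Since $\alpha<r$, I split $\R=\{|v|\le 1\}\cup\{|v|\ge 1\}$: on the first set $|v|^{2\alpha}|\wh f(v)|^2\le|\wh f(v)|^2$, and on the second $|v|^{2\alpha}|\wh f(v)|^2\le |v|^{2r}|\wh f(v)|^2$, so that $\Rieszr\subset\Rieszalpha$. Continuity is preserved trivially.

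For the middle inclusion $\Rieszalpha\cap C(\R)\subsetneqq M^{2,1}$, the key observation is the weight splitting $|\wh f(v)|=|v|^{-\alpha}\cdot|v|^\alpha|\wh f(v)|$ on each dyadic tail piece $[n,n+1]$ with $|n|\ge 1$. For such $n$, Cauchy--Schwarz on the interval gives
\[
\brbigg{\int_n^{n+1}|\wh f(v)|^2\,dv}^{1/2}\le (\max(|n|,|n+1|))^{-\alpha}\brbigg{\int_n^{n+1}|v|^{2\alpha}|\wh f(v)|^2\,dv}^{1/2},
\]
after which a second Cauchy--Schwarz in $n$ produces the factor $\bigl(\sum_{|n|\ge 1}|n|^{-2\alpha}\bigr)^{1/2}$, which is finite \emph{precisely because} $\alpha>1/2$. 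The two central intervals $[-1,0]$ and $[0,1]$ contribute at most $\sqrt{2}\,\|\wh f\|_{L^2}$ by a direct Cauchy--Schwarz, so altogether $\|f\|_{M^{2,1}}\le C_\alpha\,\|f\|_{\Rieszalpha}$. Continuity of $f\in M^{2,1}$ was already noted in \eqref{eq_hans_2}.

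For the rightmost inclusion $M^{2,1}\subsetneqq F^2\cap S_h^2$, two things must be verified. That $\wh f\in L^1(\R)$ follows by one more use of Cauchy--Schwarz on each unit interval: $\int_n^{n+1}|\wh f(v)|\,dv\le\{\int_n^{n+1}|\wh f(v)|^2\,dv\}^{1/2}$, so $\|\wh f\|_{L^1}\le\|f\|_{M^{2,1}}$, giving $f\in F^2$. For the sampling part $f\in S_h^2$, my plan is a band-piece decomposition: set $\psi_n(t):=\frac{1}{\sqrt{2\pi}}\int_n^{n+1}\wh f(v)e^{ivt}\,dv$ and write $\psi_n(t)=e^{i(n+\frac12)t}\widetilde{\psi}_n(t)$ where $\widetilde\psi_n\in B^2_{1/2}$ with $\|\widetilde\psi_n\|_{L^2}=\{\int_n^{n+1}|\wh f|^2\,dv\}^{1/2}$. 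Each $\widetilde\psi_n$ lies in $W^{1,2}\cap C(\R)$ with $\|\widetilde\psi_n'\|_{L^2}\le \frac12\|\widetilde\psi_n\|_{L^2}$ by Bernstein's inequality, so Proposition~\ref{propw2} gives $\{h\sum_k|\psi_n(hk)|^2\}^{1/2}\le(1+h/2)\|\widetilde\psi_n\|_{L^2}$. Since $f(hk)=\sum_n\psi_n(hk)$ converges absolutely (the crude bound $|\psi_n(hk)|\le\frac{1}{\sqrt{2\pi}}\{\int_n^{n+1}|\wh f|^2\}^{1/2}$ is summable), Minkowski's inequality in $\ell^2(\Z_k)$ then yields $\{h\sum_k|f(hk)|^2\}^{1/2}\le(1+h/2)\|f\|_{M^{2,1}}$, i.e.\ $f\in S_h^2$.

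The main obstacle is the $S_h^2$ part of the last step: the unit-interval Wiener-amalgam structure of $\wh f$ does not directly match the $h$-spaced sampling lattice, and naive Poisson summation is unavailable without further regularity. The band-piece/modulation decomposition above circumvents this by trading each unit-band piece of $\wh f$ for a translated Bernstein-space function on which Nikol'ski\u\i's inequality (Proposition~\ref{propw2}) with Bernstein's inequality is available with a constant uniform in $n$. Strictness of each of the three inclusions is deferred to the explicit constructions in Section~\ref{sec_counterexamples}: functions whose Fourier transform is a suitably sharp power-weighted tail separate $W^{r,2}$ from $\Rieszalpha$; functions whose $|\wh f|$ decays just fast enough to be $\ell^1$-summable on unit intervals but whose $|v|^\alpha\wh f$ fails to be in $L^2$ separate $\Rieszalpha$ from $M^{2,1}$; and a function whose $\wh f$ has $\ell^2$-but-not-$\ell^1$ decay across unit intervals while still being in $L^1\cap L^2$ separates $M^{2,1}$ from $F^2\cap S_h^2$.
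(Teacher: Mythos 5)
Your proof is correct, but it follows a genuinely different route from the paper's in two of the three steps. For the middle inclusion $\Rieszalpha\cap C(\R)\subset M^{2,1}$ the paper simply cites known embedding results of Toft and Wang--Hudzik (and obtains strictness by a nesting argument: equality for one $\alpha>1/2$ would force $\Rieszalpha=\Rieszgamma$ for all $\gamma\in(1/2,\alpha)$, contradicting the strict monotonicity of the scale $H^\alpha_2$); your direct two-step Cauchy--Schwarz computation with the weight $|v|^{-\alpha}\cdot|v|^{\alpha}|\wh{f}(v)|$ is exactly the calculation the authors carry out for the analogous step in Proposition~\ref{propa1_star}, so it is certainly in the spirit of the paper and is self-contained --- just note that the correct bound on $[n,n+1]$ is $\bigl(\min(|n|,|n+1|)\bigr)^{-\alpha}$, not the maximum, since $t\mapsto t^{-\alpha}$ is decreasing (this does not affect summability for $\alpha>1/2$), and that the ``central'' intervals to be treated separately are those for $n=0$ and $n=-1$. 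For the rightmost inclusion the paper proves $M^{2,1}\subset S_1^2$ by invoking the Holland--Fournier--Stewart theorem that the Fourier transform maps $W(L^2,\ell^1)$ into $W(L^\infty,\ell^2)$, and then passes to general $h$ via the dilation-invariance estimates of Proposition~\ref{prop_dilation_invariance}; your band-decomposition into modulated pieces $\widetilde{\psi}_n\in B^2_{1/2}$, combined with Proposition~\ref{propw2}, Bernstein's inequality and Minkowski's inequality in $\ell^2$, is a valid alternative that handles all $h>0$ in one stroke with the explicit constant $(1+h/2)\norm{f}_{M^{2,1}}$ and avoids both the amalgam Fourier-transform theorem and the dilation machinery --- at the price of being longer and less conceptual. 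Your deferral of all three strictness claims to the counterexamples of Section~\ref{sec_counterexamples} is legitimate (Proposition~\ref{prop_counter_last} separates the Sobolev scales and, via $\Lip_r(\alpha)\supset\Rieszalpha$, also $\Rieszalpha$ from $M^{2,1}$; Proposition~\ref{prop_counter_allgemein}\,(iv) separates $M^{2,1}$ from $F^2\cap S_h^2$), though you lose the paper's cleaner abstract argument for strictness of the middle inclusion.
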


\begin{proof}
First we note that (cf.~Corollary~\ref{cor_Lip_inclusions} and Proposition~\ref{prop_counter_last}\,a)
\begin{equation}\label{eq_Walpha_Wbeta_inclusion}
\Rieszbeta\cap C(\R)\subsetneqq \Rieszalpha\cap C(\R) \qquad (\alpha < \beta).
\end{equation}
The leftmost inclusion now follows by \eqref{eq_W_r}. The second inclusion can be found in \cite[(0.3), Thm.~3.1, (3.2)]{Toft_2004}, \cite[Thm~2.14]{Wang-Hudzik_2007}.
%
%Next assume $f\in \Rieszalpha\cap C(\R)$. Noting \eqref{eq_hans_2}, we have only to show that $\wh f \in W(L^2,\ell^1)$. Now, $|v|^\alpha \wh f (v) =\phi(v)$ for some $\phi\in\Ltwo$, and it follows that $g(v):=|v|^{-\alpha}\phi(-v)=\wh f(-v)$ belongs to $\Ltwo\cap \Lone$, noting that $\alpha>1/2$. Since $f$ is continuous, we have $f(x)=\wh g(x)$ for all $x\in\R$. It remains to show that $g\in W(L^2,\ell^1)$. In this respect we have
%%
%\begin{align*}
%\sum_{n=1}^\infty \biggbr{\int_n^{n+1} \bigabs{\wh f(v)}^2\,dv}^{1/2}
%{}={} & \sum_{n=1}^\infty \biggbr{\int_n^{n+1} \frac{|\phi(v)|^2}{|v|^{2\alpha}}\,dv}^{1/2}\\[2ex]
%%
%{}\le{}& \sum_{n=1}^\infty \rez{n^\alpha}\biggbr{\int_n^{n+1} \abs{\phi(v)}^2\,dv}^{1/2} \\[2ex]
%%
%{}\le{}&\Biggbr{\sum_{n=1}^\infty \rez{n^{2\alpha}}}^{1/2}\cdot\Biggbr{\sum_{n=1}^\infty \int_n^{n+1} \abs{\phi(v)}^2\,dv}^{1/2}\\[2ex]
%%
%{}\le{}& \Biggbr{\sum_{n=1}^\infty \rez{n^{2\alpha}}}^{1/2}\cdot \|\phi\|_{L^2(\R)}<\un.
%\end{align*}
%%
%The same bound holds for the sum $\sum_{n=-\infty}^{-2}$, allowing us to conclude that $\wh f\in W(L^2,\ell^1)$ and hence $f\in M^{2,1}$.  This proves the second inclusion of \eqref{eq_Walpha_inclusion}
Equality cannot hold since $\Rieszalpha\cap C(\R) =\ M^{2,1}$ for some $\alpha >1/2$ would imply
\[
  \Rieszalpha=\Rieszgamma=M^{2,1}\qquad (1/2<\gamma < \alpha),
\]
which contradicts \eqref{eq_Walpha_Wbeta_inclusion}.

Concerning the third inclusion of \eqref{eq_Walpha_inclusion}, if $f\in M^{2,1}$, then $f=\wh{g}$ with $g\in W(L^2,\ell^1)$. Using the standard inclusion relation  for Wiener amalgam spaces \eqref{eq_general1}, we deduce $g\in L^1(\R)\cap L^2(\R)$, and it follows by \cite[Prop.~5.1.2, Prop.~5.2.1]{Butzer-Nessel_1971} that $f=\wh{g}\in L^2(\R)\cap C(\R)$. Furthermore $\wh f(v) = \wh{\wh g}(v)=g(-v)\in L^1(\R)$. Altogether we see that $f$ belongs to $F^2$.

Next we show that $M^{2,1}$ is a subset of $S_1^2$ as well. By a fundamental result on Fourier transforms in Wiener
amalgam spaces \cite[Theorem~2]{Holland_1974}, \cite[Theorem~8]{Holland_1975}, \cite[Theorem~2.8]{Fournier-Stewart_1985}, it follows
that $g\in W(L^2,\ell^1)$ implies $\wh{g}\in W(L^\infty, \ell^2)$. Thus $M^{2,1}$ is a subset of $W(L^\infty, \ell^2)$, which is obviously a subset of $S_1^2$. Now let $h>0$. Since $f_h(\cdot):=f(h\,\cdot\,)$ belongs to $M^{2,1}$ by the dilatation invariance (Proposition~\ref{prop_dilation_invariance}), it belongs to $S^2_1$ as well, and so $f\in S^2_h$.
A counterexample, showing that the rightmost inclusion in \eqref{eq_Walpha_inclusion} is strict, is provided in Proposition~\ref{prop_counter_allgemein}\,(iv); see also \cite{Butzer-Schmeisser-Stens_2014a}.
\end{proof}
\subsection{The readapted modulation space $\Mn$}
Let $f\in M^{2,1}$. The dilation invariance of the Wiener amalgam space implies that
\begin{equation}\label{M_new1}
\sum_{n\in\Z} \sm{}
\end{equation}
is finite for all $h>0$. However, if $f\not\equiv 0$, then, as a function of $h$, the expression \eqref{M_new1} is not bounded. Indeed, either the term for $n=0$ or that for $n=-1$ or both approach $+\infty$ as $h \to 0+$. If we omit $n=0$ and $n=-1$ in the summation, then this modified expression \zit{M_new1} may remain bounded. For example, for a bandlimited function $f$ it becomes zero when $h$ is sufficiently small. Thus the size of that modified expression may indicate the deviation from bandlimitedness. This is the motivation for specifying a subspace $\Mn$ of $M^{2,1}$ as follows.

The space $\Mn$ comprises all measurable $f\colon \R\to \C$ such that
\begin{equation}\label{eq_def_norm_Mn}
  \norm{f}_{\Mn} := \norm{f}_{\Ltwo} + \mathcal{N}(f)<\infty
\end{equation}
where
\begin{equation}\label{M_new2}
\mathcal{N}(f) := \sup_{0<h\le 1} \sum_{n\in\Z\setminus\{-1, 0\}}\sm{}.
\end{equation}

Obviously $\|\cdot\|_{\Mn}$ defines a norm on $\Mn$ with
\begin{equation}\label{eq_normvergleich}
  \norm{f}_{\Ltwo}\le \norm{f}_{M^{2,1}}\le 2\norm{f}_{\Mn}\qquad(f\in \Mn),
\end{equation}
\ie, $\Mn\subset M^{2,1}\subset \Ltwo$. Further,  $\Mn\subsetneqq M^{2,1}$ as is shown by a counterexample in Proposition~\ref{prop_counterex2}.

\begin{theorem}\label{thm_completeness}
$\Mn$ is a Banach space.
\end{theorem}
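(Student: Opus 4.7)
The plan is to run the standard completeness argument: extract an $\Ltwo$-limit from a Cauchy sequence in $\Mn$, and then use lower semicontinuity of $\mathcal{N}$ under $\Ltwo$-convergence to promote this to an $\Mn$-limit.

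For brevity, for each $n\in\Z\setminus\{-1,0\}$ and each $h\in(0,1]$, write $a_{n,h}(f):=\sm{}$, so that $\mathcal{N}(f)=\sup_{0<h\le 1}\sum_{n\in\Z\setminus\{-1,0\}}a_{n,h}(f)$. Each $f\mapsto a_{n,h}(f)$ is a seminorm on $\Ltwo$ (apply the triangle inequality to $\wh f$ restricted to the interval $[n/h,(n+1)/h]$), and Plancherel provides the continuity estimate
\[
  \bigabs{a_{n,h}(f)-a_{n,h}(g)}\le a_{n,h}(f-g)\le h^{-1/2}\,\norm{f-g}_{\Ltwo}.
\]

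Given a Cauchy sequence $(f_k)\subset\Mn$, the inequality $\norm{\cdot}_{\Ltwo}\le\norm{\cdot}_{\Mn}$ produces an $\Ltwo$-limit $f$, and $C:=\sup_k\norm{f_k}_{\Mn}$ is finite. To see $f\in\Mn$, I would fix $h\in(0,1]$ and a finite $F\subset\Z\setminus\{-1,0\}$, pass to the limit $k\to\infty$ inside the finite sum $\sum_{n\in F}a_{n,h}(f_k)\le C$ using the continuity above on each summand, and then take suprema over $F$ and then over $h$ to conclude $\mathcal{N}(f)\le C$. The same device proves convergence in $\Mn$-norm: given $\eps>0$, I would pick $N$ with $\norm{f_k-f_j}_{\Mn}<\eps$ for all $k,j\ge N$; then for each such $k$, every finite $F$, and every $h\in(0,1]$, the inequality $\sum_{n\in F}a_{n,h}(f_k-f_j)\le\eps$ survives the passage $j\to\infty$ by continuity, and yields $\mathcal{N}(f_k-f)\le\eps$ after taking suprema. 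Combined with $\norm{f_k-f}_{\Ltwo}\le\eps$, this gives $\norm{f_k-f}_{\Mn}\le 2\eps$ for $k\ge N$.

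Nothing in this argument is truly subtle. The only mild obstacle is that $\mathcal{N}$ couples an $\ell^1$-sum over $n$ with a supremum over $h\in(0,1]$, so one must be careful about the order in which limits are interchanged; the Fatou-type passage goes through cleanly provided one always works with a finite truncation $F\subset\Z\setminus\{-1,0\}$ on the inside and defers the outer $\sup_{h}$ until after the $k\to\infty$ and $\sup_F$ steps have already been carried out.
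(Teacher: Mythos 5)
Your proof is correct, and it takes a genuinely different (more self-contained) route than the paper's. The paper invokes the series criterion for completeness (every absolutely convergent series converges), leans on the known completeness of the ambient space $\Mtwo$ to produce the limit function, and controls $\Nh$ via the dilation-invariance estimate $\Nh(g)\le C(h)\norm{g}_{\Mtwo}$ coming from Proposition~\ref{prop_dilation_invariance}. You instead work directly with a Cauchy sequence, produce the limit in $\Ltwo$ only, and recover both $\mathcal{N}(f)\le C$ and $\mathcal{N}(f_k-f)\to 0$ by a Fatou-type lower semicontinuity argument: truncate the $n$-sum to a finite set $F$, pass to the limit term by term using the elementary bound $a_{n,h}(f-g)\le h^{-1/2}\norm{f-g}_{\Ltwo}$ from Plancherel, and only afterwards take the suprema over $F$ and over $h\in(0,1]$. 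Your ordering of the limit operations is exactly right and is the one place where care is needed. What the paper's route buys is brevity given the modulation-space background (completeness of $\Mtwo$ and the dilation estimates are already on hand and are reused elsewhere); what your route buys is independence from that background --- you need only the completeness of $\Ltwo$ and the isometry of the Fourier transform, so the argument would survive even if one knew nothing about $\Mtwo$ beyond the definition of the seminorm $\mathcal{N}$.
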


In regard to the proof of this basic theorem we need the following preliminary lemma; see e.\,g.~\cite[pp.~116--117]{Royden_1968}.

\begin{lemma}\label{la_Royden}
  A normed linear space is complete if and only if every absolute convergent series is convergent, i.\,e.,
  \[
    \sum_{k=1}^\un\norm{f_k}_X<\un \implies \sum_{k=1}^\un f_k \text{\ \ is convergent in $X$.}
  \]
\end{lemma}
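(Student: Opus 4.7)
The plan is a standard two-direction argument, working with partial sums and Cauchy subsequences.

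For the forward direction, assume $X$ is complete. Given a series $\sum_{k=1}^\infty f_k$ with $\sum_{k=1}^\infty \norm{f_k}_X < \infty$, I would examine the sequence of partial sums $S_n := \sum_{k=1}^n f_k$. For $m>n$, the triangle inequality yields $\norm{S_m - S_n}_X \le \sum_{k=n+1}^m \norm{f_k}_X$, which is the tail of a convergent series of non-negative reals and therefore tends to $0$ as $n\to\infty$. Hence $(S_n)$ is Cauchy in $X$, and by completeness it converges, which is exactly the statement that $\sum_{k=1}^\infty f_k$ is convergent in $X$.

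For the reverse direction, assume every absolutely convergent series in $X$ converges, and let $(g_n)$ be a Cauchy sequence in $X$. The key step is to extract a fast subsequence: choose indices $n_1 < n_2 < \dotsb$ such that $\norm{g_{n_{k+1}} - g_{n_k}}_X < 2^{-k}$ for every $k\in\N$; this is possible by the Cauchy property. Now set $f_1 := g_{n_1}$ and $f_{k+1} := g_{n_{k+1}} - g_{n_k}$ for $k\ge 1$. Then $\sum_{k=1}^\infty \norm{f_k}_X \le \norm{g_{n_1}}_X + \sum_{k=1}^\infty 2^{-k} < \infty$, so by hypothesis the series $\sum_{k=1}^\infty f_k$ converges in $X$. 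Its partial sums are precisely $g_{n_K}$, so the subsequence $(g_{n_K})$ converges to some $g\in X$.

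To finish, I would use the elementary observation that a Cauchy sequence with a convergent subsequence converges to the same limit: given $\eps>0$, pick $N$ so that $\norm{g_m-g_n}_X<\eps/2$ for $m,n\ge N$, and pick $K$ so large that $n_K\ge N$ and $\norm{g_{n_K}-g}_X<\eps/2$; then $\norm{g_n-g}_X<\eps$ for $n\ge N$. Hence $(g_n)$ converges, and $X$ is complete.

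The only mildly delicate step is the extraction of the geometrically contracting subsequence, but this is routine for Cauchy sequences. No obstacle of substance is expected since this is a classical fact; the whole argument is self-contained and uses only the triangle inequality, the definition of Cauchy sequence, and the hypothesis on absolutely convergent series.
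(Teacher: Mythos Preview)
Your proof is correct and is precisely the standard argument. The paper itself does not prove this lemma but merely cites Royden \cite[pp.~116--117]{Royden_1968}, where exactly this two-direction argument (partial sums are Cauchy for one direction, geometric subsequence extraction for the other) is given.
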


{\renewcommand{\proofname}{Proof of Theorem~\ref{thm_completeness}}
\begin{proof}
We set for $h\in (0,1]$,
\begin{equation}\label{Nhf}
  \mathcal{N}_h(g):=\sum_{n\in\Z\setminus\{-1,0\}}\brbigg{\frac1h\int_{n/h}^{(n+1)/h} \bigabs{\wh g(v)}^2\dv}^{1/2}.
\end{equation}
Obviously, $\mathcal{N}_h$ defines a seminorm on $\Mtwo$ and also on $\Mn$. In view of Proposition~\ref{prop_dilation_invariance} there holds
\[
   \Nh(g)\le C(h)\norm{g}_{\Mtwo}\qquad (g\in\Mtwo)
\]
with a constant $C(h)$ depending on $h\in(0,1]$.

Now, let $\sum_{k=1}^\un f_k$ be a series in $\Mn$ with
\begin{equation}\label{1}
    \sum_{k=1}^\un\norm{f_k}_{\Mn}<\un.
\end{equation}
It follows by \eqref{eq_normvergleich} that also $\sum_{k=1}^\un\norm{f_k}_{\Mtwo}<\un$, and then by Lemma~\ref{la_Royden}, noting the completeness of $\Mtwo$, that there exists a function $f\in \Mtwo$ such that
\begin{equation}\label{2}
  \lim_{m\to\un}\norm{f- S_m}_{\Mtwo}=0,
\end{equation}
where $S_m:=\sum_{k=1}^m f_k$.

We are going to show that $f\in\Mn$ and $\lim_{m\to\un}\norm{f-S_m}_{\Mn}=0$.  In this respect we have for each $h\in (0,1]$,
\begin{align*}
  \Nh(f)\le {}\Nh\p{f- S_m}+\Nh\p{S_m}%\\[1ex]
  {}\le{} & C(h)\norm{f- S_m}_{\Mtwo} + \sum_{k=1}^m \Nh(f_k)\\[1ex]
  {}\le{}& C(h)\norm{f- S_m}_{\Mtwo} + \sum_{k=1}^m \CN(f_k).
\end{align*}
Letting $m\to\infty$
%by \eqref{2},
%\[
%  \Nh(f)\le \sum_{k=1}^\un \CN(f_k)\le \sum_{k=1}^\un\norm{f_k}_{\Mn}.
%\]
%%
and taking the supremum over $h\in (0,1]$ yields by \eqref{2} and \eqref{1},
\[
  \CN(f)\le \sum_{k=1}^\un\norm{f_k}_{\Mn}<\un.
\]
Since we already know that $f\in M^{2,1}\subset \Ltwo$ it follows that $f\in \Mn$.

Similarly, we have for $N>m$,
\begin{align*}
  \Nh\p{f- S_m}\le{} & \Nh\p{f- S_N}+\Nh\p{S_N-S_m}\\[1ex]
%  {}\le{}& C(h)\norm{f- S_N}_{\Mtwo} + \Nh\pbigg{\sum_{k=m+1}^N f_k}\\[1ex]
%  {}\le{}& C(h)\norm{f- S_N}_{\Mtwo} + \sum_{k=m+1}^N \Nh(f_k)\\[1ex]
   {}\le{}& C(h)\norm{f- S_N}_{\Mtwo} + \sum_{k=m+1}^N \CN(f_k),
\end{align*}
and we obtain by the same arguments as above that
\[
  \CN\p{f- S_m}\le \sum_{k=m+1}^\un \CN(f_k)\le \sum_{k=m+1}^\un\norm{f_k}_{\Mn}.
\]
By \eqref{1}, the latter series becomes arbitrarily small for $m$ large enough, and hence, noting \eqref{2} and \eqref{eq_normvergleich}, we obtain as desired,
\[
  \lim_{m\to\un} \norm{f- S_m}_{\Mn}= \lim_{m\to\un}\brbig{\norm{f- S_m}_{\Ltwo}+\CN\p{f- S_m}}=0.
\]

Altogether, we have shown that the series  $\sum_{k=1}^\un f_k$ converges in $\Mn$, and the completeness of $\Mn$ now follows by Lemma~\ref{la_Royden}.
\end{proof}}

As an extension of Proposition~\ref{propa1} there holds

\begin{proposition}\label{propa1_star}
For each  $r\in \N$, $1/2<\alpha<r$ and $h>0$ there holds the inclusion chain
\begin{gather}\label{sobolev2_star}
W^{r,2}(\R)\cap C(\R) \subsetneqq
  \Rieszalpha\cap C(\R) \subsetneqq\Mn\subsetneqq M^{2,1} \subsetneqq F^2\cap S_h^2.
\end{gather}
\end{proposition}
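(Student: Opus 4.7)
The plan is to reduce as much as possible to the already-established Proposition~\ref{propa1} and to concentrate on the new middle inclusion $\Rieszalpha\cap C(\R)\subseteq\Mn$. The leftmost inclusion $\Wrtwo\cap C(\R)\subsetneqq\Rieszalpha\cap C(\R)$ and the rightmost one $M^{2,1}\subsetneqq F^2\cap S_h^2$ are part of Proposition~\ref{propa1}. The strictness $\Mn\subsetneqq M^{2,1}$ is guaranteed by the counterexample in Proposition~\ref{prop_counterex2} already alluded to after \eqref{eq_normvergleich}. So the only substantive work is to establish the inclusion $\Rieszalpha\cap C(\R)\subseteq\Mn$ and to rule out equality.

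For the new inclusion, given $f\in\Rieszalpha\cap C(\R)$ I would estimate $\mathcal{N}_h(f)$ from \eqref{Nhf} uniformly for $h\in(0,1]$. On the interval $[n/h,(n+1)/h]$ with $n\notin\{-1,0\}$ one has $|v|\ge c_n/h$, where $c_n=n$ for $n\ge1$ and $c_n=|n+1|$ for $n\le-2$, so that
\[
\int_{n/h}^{(n+1)/h}\bigabs{\wh f(v)}^2\dv\le\frac{h^{2\alpha}}{c_n^{2\alpha}}\int_{n/h}^{(n+1)/h}\abs{v}^{2\alpha}\bigabs{\wh f(v)}^2\dv.
\]
Taking square roots, factoring out $h^{\alpha-1/2}/c_n^{\alpha}$, and applying the Cauchy--Schwarz inequality to the sum over $n\in\Z\setminus\{-1,0\}$ yields
\[
\mathcal{N}_h(f)\le h^{\alpha-1/2}\brbigg{\sum_{n\in\Z\setminus\{-1,0\}}\frac{1}{c_n^{2\alpha}}}^{1/2}\bignorm{\abs{v}^\alpha\wh f(v)}_{\Ltwo}.
\]
Since $\sum_{n\in\Z\setminus\{-1,0\}}c_n^{-2\alpha}=2\sum_{k=1}^\infty k^{-2\alpha}$ is finite precisely because $\alpha>1/2$, and $h^{\alpha-1/2}\le 1$ for $h\in(0,1]$ and $\alpha>1/2$, we deduce $\CN(f)\le C_\alpha\bignorm{D^{\{\alpha\}}\kern-2pt f}_{\Ltwo}<\infty$ by Proposition~\ref{prop_Riesz_existence_equiv}. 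Hence $f\in\Mn$. Note that the restriction $\alpha>1/2$ is forced by this very step, where $\sum k^{-2\alpha}$ must converge; this is also the main, though easily surmountable, technical obstacle.

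For strictness of $\Rieszalpha\cap C(\R)\subsetneqq\Mn$ I would argue by contradiction in the manner of Proposition~\ref{propa1}. If equality held for some particular $\alpha>1/2$, then for any $\gamma$ with $1/2<\gamma<\alpha$ the already-established chain would give
\[
\Rieszgamma\cap C(\R)\subseteq\Mn=\Rieszalpha\cap C(\R),
\]
contradicting the strict inclusion $\Rieszalpha\cap C(\R)\subsetneqq\Rieszgamma\cap C(\R)$ recorded in \eqref{eq_Walpha_Wbeta_inclusion}. Putting together the four inclusions, each shown to be strict, completes the chain \eqref{sobolev2_star}.
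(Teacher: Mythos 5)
Your proposal is correct and follows essentially the same route as the paper: the leftmost and the two rightmost (strict) inclusions are delegated to Proposition~\ref{propa1} and Proposition~\ref{prop_counterex2}, and the new inclusion $\Rieszalpha\cap C(\R)\subset\Mn$ is obtained by exactly the paper's computation, namely bounding $\abs{\wh f(v)}^2$ by $(c_n/h)^{-2\alpha}\abs{v}^{2\alpha}\abs{\wh f(v)}^2$ on each interval $[n/h,(n+1)/h]$ and applying Cauchy--Schwarz, so that $\mathcal N_h(f)\le h^{\alpha-1/2}\bigl(2\sum_{k\ge1}k^{-2\alpha}\bigr)^{1/2}\,\bignorm{D^{\{\alpha\}}\kern-2pt f}_{\Ltwo}$ with convergence forced by $\alpha>1/2$. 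The only (inessential) deviation is strictness of this middle inclusion, which you derive by the nesting-contradiction argument via \eqref{eq_Walpha_Wbeta_inclusion} rather than by the paper's direct citation of the counterexample in Proposition~\ref{prop_counter_last}\,b); both rest on the same counterexamples and both are valid.
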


\begin{proof}
It remains to show the second inclusion. If $f\in\Rieszalpha\cap C(\R)$, then $f\in M^{2,1}$ by Proposition~\ref{propa1}, and hence $\norm{f}_{\Ltwo}<\un$. In order to estimate $\mathcal N(f)$, we proceed in a similar way as in the corresponding proof of Proposition~\ref{propa1}. Indeed, with $\phi$ as in that proof one has for $0<\eta\le 1$
\begin{align*}
\sum_{n=1}^\infty \brbigg{\frac1\eta\int_{n/\eta}^{(n+1)/\eta} \bigabs{\wh f(v)}^2\,dv}^{1/2}\!\!\!
{}\le{} &  \brbigg{\sum_{n=1}^\infty \frac{\eta^{2\alpha-1}}{n^{2\alpha}}}^{1/2}\!\!\!\cdot\!
\brbigg{\sum_{n=1}^\infty \int_{n/\eta}^{(n+1)/\eta} \abs{\phi(v)}^2\,dv}^{1/2}
\\[2ex]
%
%{}\le{} & h \brbigg{\sum_{n=N}^\infty \rez{n^2}}^{1/2}\!\!\!\cdot\!
%\li\{\sum_{n=N}^\infty\frac1h \int_{n/h}^{(n+1)/h} \abs{\phi(v)}^2\,dv\ri\}^{1/2}
%\\[2ex]
{}\le{}& \eta^{\alpha-1/2} \brbigg{\sum_{n=1}^\infty{n^{-2\alpha}}}^{1/2}\cdot\|\phi\|_{L^2(\R)}\,.
\end{align*}
The same holds for the sum $\sum_{n=-\infty}^{-1}$, showing that $\mathcal N(f)<\un$. It follows that $\Rieszalpha\cap C(\R) \subset\Mn$. Equality cannot hold in view of Proposition~\ref{prop_counter_last}\,b), noting that $\Rieszalpha\cap C(\R) \subset \Lip_r(\alpha)\cap C(\R)$ by Corollary~\ref{cor_Lip_inclusions}.
\end{proof}

\section{Norms, distances, derivatives and rates of convergence}\label{distance}
\subsection{Distances from the Bernstein space $B^2_\sigma$.}
For $1\le q \le 2$, the distance of a function $f\in F^2$ from the Bernstein space $B_\sigma^2$ is defined by
\[
  \dist_q(f, B_\sigma^2):=\inf_{g\in B^2_\sigma}\bignorm{\wh f-\wh g}_{L^q(\R)}.
\]
Note that $f\in F^2$ implies $\wh f \in \Lone\cap\Ltwo$ and hence $\wh f \in \Lq$ for $1\le q\le 2$. For this distance we have (see \cite{Butzer-Schmeisser-Stens_2013}),
\begin{proposition}\label{prop_dist_representation}
a) If $f\in F^2$, then for each $q\in [1, 2]$,
\[
 \dist_q(f, B_\sigma^2)= \brbigg{\int_{\abs{v}\ge\sigma} |\wh f(v)|^q\dv}^{1/q}.
\]
b) Let $f\in \Wstwo\cap\Fstwo$ for some $s\in \N$. Then for $q\in[1,2]$ and $0\le k\le s$,
\[
 \dist_q(f^{(k)}, B_\sigma^2)= \brbigg{\int_{\abs{v}\ge\sigma} |v^k\wh f(v)|^q\dv}^{1/q}.
\]
\end{proposition}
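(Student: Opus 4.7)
The plan is to reduce everything to a clean decomposition of the Fourier side, using the defining property of $B^2_\sigma$ that every $g\in B^2_\sigma$ has $\wh g$ supported in $[-\sigma,\sigma]$.

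For part (a), I would first note that $f\in F^2$ gives $\wh f\in L^1(\R)\cap L^2(\R)$, hence $\wh f\in L^q(\R)$ for every $q\in[1,2]$ by interpolation, so the right-hand side is finite. For any $g\in B^2_\sigma$, $\wh g$ vanishes outside $[-\sigma,\sigma]$, so
\[
  \bignorm{\wh f-\wh g}_{L^q(\R)}^q
  = \int_{\abs v<\sigma}\bigabs{\wh f(v)-\wh g(v)}^q\dv + \int_{\abs v\ge\sigma}\bigabs{\wh f(v)}^q\dv
  \ge \int_{\abs v\ge\sigma}\bigabs{\wh f(v)}^q\dv,
\]
which gives the lower bound for $\dist_q(f,B^2_\sigma)$. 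For the matching upper bound I would exhibit a specific minimizer: set $\psi:=\wh f\cdot\chi_{[-\sigma,\sigma]}$, which lies in $L^2(\R)$ since $\wh f$ does, and define $g:=\mathcal{F}^{-1}\psi\in L^2(\R)$ by Plancherel. Because $\wh g=\psi$ vanishes outside $[-\sigma,\sigma]$, one has $g\in B^2_\sigma$, and for this $g$ the first integral above is zero, so the lower bound is attained.

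For part (b), the strategy is simply to apply part (a) to the function $f^{(k)}$ once I check it belongs to $F^2$. Since $f\in W^{s,2}(\R)$, Proposition~\ref{propw1} gives $f^{(k)}\in L^2(\R)$ with $\wh{f^{(k)}}(v)=(iv)^k\wh f(v)$ a.e.\ for $0\le k\le s$. From $f\in F^{s,2}$ I get $v^s\wh f(v)\in L^1(\R)$, and splitting at $\abs v=1$ one has $\abs{v^k\wh f(v)}\le\abs{v^s\wh f(v)}$ for $\abs v\ge1$ while on $\abs v\le1$ the function $\wh f$ is in $L^1$ by Cauchy--Schwarz (since $\wh f\in L^2$). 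Hence $\wh{f^{(k)}}\in L^1(\R)$, and \eqref{eq_derivative_representation} also guarantees $f^{(k)}\in C(\R)$, so $f^{(k)}\in F^2$. Part (a) applied to $f^{(k)}$ then yields
\[
 \dist_q(f^{(k)},B^2_\sigma)=\brbigg{\int_{\abs v\ge\sigma}\bigabs{\wh{f^{(k)}}(v)}^q\dv}^{1/q}
 =\brbigg{\int_{\abs v\ge\sigma}\bigabs{v^k\wh f(v)}^q\dv}^{1/q}.
\]

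The main obstacle is the mild bookkeeping in part (b), namely verifying $f^{(k)}\in F^2$ so that part (a) actually applies; the geometric content of the proposition is entirely captured by the support split on the Fourier side, which makes the nearest element to $f$ in $B^2_\sigma$ nothing but the ``ideal low-pass truncation'' of $f$.
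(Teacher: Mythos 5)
Your argument is correct: the lower bound from splitting the $L^q$ integral at $\abs{v}=\sigma$ (using that $\wh g$ vanishes a.e.\ outside $[-\sigma,\sigma]$ for $g\in B^2_\sigma$), the explicit minimizer $g=\mathcal{F}^{-1}\bigl(\wh f\,\chi_{[-\sigma,\sigma]}\bigr)\in B^2_\sigma$, and the reduction of b) to a) after checking $f^{(k)}\in F^2$ are all sound. The paper does not prove this proposition itself but defers to the reference \cite{Butzer-Schmeisser-Stens_2013}; your "ideal low-pass truncation" argument is the standard one and there is nothing to add.
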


Proposition~\ref{prop_dist_representation} shows in particular that the distances $\dist_q(f,B_\sigma^2)$ and $\dist_q(f^{(k)}, B_\sigma^2)$ tend to zero for  $\sigma\to \infty$.

\subsection{Derivative-free estimates for the distance from $B^2_\sigma$}
Next we state some derivative-free estimates for the distances $\dist_q(f, B_\sigma^2)$; see \cite{Butzer-Schmeisser-Stens_2013}.
\begin{proposition}\label{cor_special_cases} Assume $q\in[1,2]$.

\smallskip\noindent
a) Let $f\in F^{2}$ and $r\in\N$. Then,
\begin{align*}
 \dist_q(f, B_\sigma^2) %=  \bigg\{\int_{\abs{v}\ge\sigma}
%   \bigabs{\wh{f}(v)}^q\,dv\bigg\}^{1/q}\\[2ex]
%
   \le c_{r,q}\bigg\{\int_\sigma^\infty v^{-q/2}\big[\omega_r(f,v^{-1},L^2(\R))\big]^q dv\bigg\}^{1/q}.
\end{align*}
If $f\in \Lip_r(\alpha)$, $1/q-1/2 < \alpha \le r$, then
\[
   \dist_q(f, B_\sigma^2)  =   \Oh\big(\sigma^{-\alpha -1/2 +1/q}\big) \qquad (\sigma\to\infty).
\]

\noindent
b) If $f\in \Wstwo\cap \Fstwo$ for some $s\in\N_0$, then for each $0\le k \le s$ and $r\in\N$,
\begin{align*}
  \dist_q(f^{(k)},B^2_\sigma) %= \bigg\{\int_{|v|>\sigma}\big|v\wh f(v) \big|^q\bigg\}^{1/q}\\[2ex]
  {}\le{}&  c_{s,r,q}\bigg\{\int_\sigma^\infty\!\!\! v^{-q/2}\big[\omega_r(f^{(s)},v^{-1},L^2(\R))\big]^q dv\bigg\}^{1/q}\\[2ex]
{}={}& \Oh\big(\sigma^{-\alpha-s+k-1/2+1/q}\big)\qquad (\sigma\to\un),
\end{align*}
the latter holding provided $f^{(k)}\in \Lip_r(\alpha)$, $1/q-1/2 < \alpha \le r$.
\end{proposition}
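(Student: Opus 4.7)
My plan is to start from Proposition~\ref{prop_dist_representation}, which rewrites $\dist_q(f^{(k)},B_\sigma^2)$ as the $L^q$-norm of $v^k\wh f(v)$ over $\brbig{|v|\ge\sigma}$, and to estimate that tail via the Fourier-side identity \eqref{eq_Delta_norm} already used in the proof of Theorem~\ref{thm_oh},
\[
   \bignorm{\Delta_h^r f}_{\Ltwo}^2 = 2^{2r}\int_\R \bigabs{\wh f(v)}^2 \sin^{2r}\!\pBig{\frac{hv}{2}}\dv,
\]
combined with a dyadic decomposition of the tail region and, for $q<2$, Hölder's inequality. Concretely, for Part~a) I would partition $\brbig{v\in\R:|v|\ge\sigma}$ into annuli $A_j:=\brbig{v\in\R:2^j\sigma\le|v|<2^{j+1}\sigma}$ and set $h_j:=\pi/(2^{j+2}\sigma)$, so that $h_j v/2\in[\pi/8,\pi/4]$ and $\sin^{2r}(h_j v/2)$ is bounded below by a positive constant throughout $A_j$. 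The identity above then yields $\int_{A_j}\bigabs{\wh f(v)}^2\dv\le c_r\bigbk{\omega_r(f,h_j,\Ltwo)}^2\le c_r'\bigbk{\omega_r(f,1/(2^j\sigma),\Ltwo)}^2$, where the last step uses $h_j\le 1/(2^j\sigma)$ together with monotonicity of the modulus in its second argument.

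For general $q\in[1,2]$, Hölder's inequality on $A_j$ (with exponents $2/q$ and $2/(2-q)$) gives
\[
  \int_{A_j}\bigabs{\wh f(v)}^q\dv \le |A_j|^{1-q/2}\brbigg{\int_{A_j}\bigabs{\wh f(v)}^2\dv}^{q/2}\le c_{r,q}\,(2^j\sigma)^{1-q/2}\bigbk{\omega_r(f,1/(2^j\sigma),\Ltwo)}^q.
\]
Summing over $j\ge 0$ and comparing the resulting dyadic series with a Riemann sum for $\int_\sigma^\infty v^{-q/2}\bigbk{\omega_r(f,1/v,\Ltwo)}^q\dv$, via monotonicity of $\omega_r$, yields the integral bound of Part~a). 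The Lipschitz rate follows by inserting $\omega_r(f,1/v,\Ltwo)=\Oh\p{v^{-\alpha}}$: the integrand becomes a pure power of $v$, the integral converges exactly when $\alpha>1/q-1/2$ and equals $\Oh\pbig{\sigma^{1-q(1/2+\alpha)}}$, whose $q$-th root is $\Oh\pbig{\sigma^{-\alpha-1/2+1/q}}$.

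Part~b) reduces to Part~a) applied to $f^{(s)}$, which belongs to $\Ltwo\cap F^2$ by the hypothesis $f\in \Wstwo\cap\Fstwo$. Indeed, since $|v|^{(k-s)q}\le\sigma^{(k-s)q}$ on $\brbig{|v|\ge\sigma}$ for $0\le k\le s$, Proposition~\ref{prop_dist_representation}\,b) gives the Sobolev-type comparison $\dist_q(f^{(k)},B_\sigma^2)\le\sigma^{k-s}\,\dist_q(f^{(s)},B_\sigma^2)$, into which one inserts the derivative-free estimate for $\dist_q(f^{(s)},B_\sigma^2)$ furnished by Part~a) applied to $f^{(s)}$. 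The Lipschitz hypothesis on $f^{(k)}$ is transferred to $f^{(s)}$ through the equivalences in Theorem~\ref{thm_equiv_new}, after which the elementary power integral is evaluated exactly as in Part~a).

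The main technical nuisance is the sum-to-integral comparison performed at the end of Part~a): for $q<2$ the weight $v^{1-q/2}$ is increasing while $\bigbk{\omega_r(f,1/v,\Ltwo)}^q$ is decreasing, so the monotone-function comparison of the dyadic sum with the integral is not completely automatic. Keeping the constant $c_{r,q}$ independent of $\sigma$ requires careful use of the dilation estimate $\omega_r(f,\lambda\delta,\Ltwo)\le(1+\lambda)^r\omega_r(f,\delta,\Ltwo)$, both to replace $h_j$ by $1/(2^j\sigma)$ inside the modulus and to absorb the one-step index shift that naturally arises in the Riemann-sum comparison.
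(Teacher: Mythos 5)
The paper offers no proof of this proposition --- it is quoted from \cite{Butzer-Schmeisser-Stens_2013} --- so your argument can only be judged on its own terms. Part a) is correct and complete in outline: with $h_j=\pi/(2^{j+2}\sigma)$ one indeed has $h_j v/2\in[\pi/8,\pi/4]$ on $A_j$, so the Parseval identity \eqref{eq_Delta_norm} bounds $\int_{A_j}|\wh f(v)|^2\,dv$ by $c_r\,[\omega_r(f;h_j;\Ltwo)]^2$; H\"older with exponents $2/q$ and $2/(2-q)$ supplies the factor $|A_j|^{1-q/2}\asymp(2^j\sigma)^{1-q/2}$; and the sum-to-integral comparison you flag as delicate is in fact closed by the doubling inequality $\omega_r(f;2\delta;\Ltwo)\le 2^r\omega_r(f;\delta;\Ltwo)$, which compensates the opposite monotonicities of the two factors. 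The power-integral computation for the $\Lip_r(\alpha)$ rate is also right.

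Part b) contains a genuine gap in its final step. Your chain correctly yields $\dist_q(f^{(k)},B_\sigma^2)\le\sigma^{k-s}c_{r,q}\{\int_\sigma^\infty v^{-q/2}[\omega_r(f^{(s)};v^{-1};\Ltwo)]^q\,dv\}^{1/q}$; note that the prefactor $\sigma^{k-s}$ (absent from the printed inequality and absorbable into the constant only for $\sigma\ge 1$) is the \emph{only} possible source of the $-s+k$ in the asserted exponent, since the integral does not depend on $k$. But you then ``transfer'' the hypothesis $f^{(k)}\in\Lip_r(\alpha)$ to $f^{(s)}$ via Theorem~\ref{thm_equiv_new}: that transfer gives $f^{(s)}\in\Lip_r(\alpha-s+k)$, requires $\alpha>s-k$ (not guaranteed by $\alpha>1/q-1/2$), and even when it applies it produces $\sigma^{k-s}\cdot\Oh\pbig{\sigma^{-(\alpha-s+k)-1/2+1/q}}=\Oh\pbig{\sigma^{-\alpha-1/2+1/q}}$, weaker than the asserted $\Oh\pbig{\sigma^{-\alpha-s+k-1/2+1/q}}$ by a factor $\sigma^{s-k}$. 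This cannot be repaired: the asserted rate does not follow from $f^{(k)}\in\Lip_r(\alpha)$ at all. For instance, with $s=1$, $k=0$, $q=2$, $\alpha=1/2$, take $\wh f\ge 0$ supported on $\bigcup_{j\ge 2}[2^j,2^j+1]$ with $\int_{2^j}^{2^j+1}|\wh f(v)|^2\,dv=2^{-2j}j^{-4}$; then $f\in W^{1,2}(\R)\cap F^{1,2}$ and $f\in\Lip_r(1/2)$ by Theorem~\ref{thm_equiv_new}, yet $\dist_2(f,B_\sigma^2)\asymp\sigma^{-1}(\log\sigma)^{-2}$, which is not $\Oh(\sigma^{-3/2})$. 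The exponent in the proposition is consistent only with the hypothesis being placed on $f^{(s)}$, i.e.\ $f^{(s)}\in\Lip_r(\alpha)$ --- the natural reading, since the middle term involves $\omega_r(f^{(s)};\cdot)$ --- and under that reading your chain closes at once: the integral root is $\Oh(\sigma^{-\alpha-1/2+1/q})$ and the prefactor $\sigma^{k-s}$ supplies the rest. You should either prove that version explicitly or record that the hypothesis as printed does not support the printed rate.
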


It should be noted that the integrals in a) and b) may be infinite although the distances on the left-hand sides are finite. If, however, $f$ satisfies a Lipschitz condition of a certain order, then the integrals are finite, as seen above.

\subsection{Distance of $\Mn$-functions from $B^2_\sigma$}
Next we consider distance estimates in the setting of the space $\Mn$, introduced in Section~\ref{sec_Wiener_amalgam_spaces}.
\begin{proposition}\label{dist_M_star}
a) If $f\in \Mn$, then
\[
   \dist_q(f, B_\sigma^2) =
\begin{cases}
         \Oh(1), & \text{ for } q=1, \\[1.2ex]
         \Oh\big(\sigma^{-1+1/q}\big), & \text{ for } q\in (1, 2]
\end{cases}  \qquad (\sigma\to\infty).
\]

\noindent b) Let $r\in\N$ and $q\in [1,2]$. If $f\in W^{r,2}(\R)\cap C(\R)$ and $f^{(r)}\in \Mn$, then

\begin{equation}\label{dist_M_star1}
    \dist_q(f, B_\sigma^2) = \Oh\big(\sigma^{-r-1+1/q}\big) \qquad (\sigma \to\infty),
\end{equation}
and for $1\le s\le r$,
\begin{equation}\label{dist_M_star2}
  \dist_q(f^{(s)}, B_\sigma^2) =
  \begin{cases}
    \Oh(1), & \text{if } s=r=q=1, \\[1.2ex]
    \Oh\big(\sigma^{-r-1+s+1/q}\big), &\text{otherwise}
  \end{cases}\qquad (\sigma\to\infty).
\end{equation}
\end{proposition}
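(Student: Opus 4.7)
Both parts rest on Proposition~\ref{prop_dist_representation}, which reduces the distances to the tail integrals $\int_{|v|\ge\sigma}\bigabs{\wh f(v)}^q\dv$ (for part a) and $\int_{|v|\ge\sigma}\abs v^{sq}\bigabs{\wh f(v)}^q\dv$ (for part b), and on matching these tails with the block decomposition of $\CN(f)$. The crucial move is, for $\sigma\ge 1$, to set $h:=1/\sigma\in(0,1]$: the blocks $[n/h,(n+1)/h]=[n\sigma,(n+1)\sigma]$ for $n\ge 1$ tile $[\sigma,\infty)$, the mirror blocks tile $(-\infty,-\sigma]$, and the indices $n\in\{-1,0\}$ excluded in the definition of $\CN(f)$ correspond exactly to the interval $(-\sigma,\sigma)$ that we drop.

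For \emph{part a)}, abbreviate $s_n:=\brbigg{\rez h\int_{n/h}^{(n+1)/h}\bigabs{\wh f(v)}^2\dv}^{1/2}$ (so that $\sum_{n\ne -1,0}s_n\le \CN(f)$). Hölder's inequality with exponents $2/q$ and $2/(2-q)$ on each block gives
\[
  \int_{n\sigma}^{(n+1)\sigma}\bigabs{\wh f(v)}^q\dv\le \sigma^{1-q}\,s_n^q.
\]
Summing over $n\ge 1$ together with the mirror sum, the standard inclusion $\ell^1\hookrightarrow\ell^q$ for $q\ge 1$ (via $s_n^q\le \CN(f)^{q-1}s_n$) yields $\sum_n s_n^q\le \CN(f)^q$. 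Taking $q$-th roots produces $\dist_q(f,B_\sigma^2)=\Oh\p{\sigma^{1/q-1}}$; for $q=1$ the exponent is $0$, which is the claimed $\Oh(1)$.

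For \emph{part b)}, I would exploit the identity $\wh f(v)=(iv)^{-r}\wh{f^{(r)}}(v)$ a.e.\ (valid since $f\in\Wrtwo$) to rewrite
\[
  \dist_q\p{f^{(s)},B_\sigma^2}^q=\int_{\abs v\ge \sigma}\abs v^{(s-r)q}\bigabs{\wh{f^{(r)}}(v)}^q\dv,
\]
so the entire argument of part a) can be re-run with $\wh{f^{(r)}}$ in place of $\wh f$, carrying an extra weight $\abs v^{(s-r)q}\le (n\sigma)^{(s-r)q}$ on the $n$-th block (legal since $s\le r$). Hölder then delivers
\[
  \int_{n\sigma}^{(n+1)\sigma}\abs v^{(s-r)q}\bigabs{\wh{f^{(r)}}(v)}^q\dv\le \sigma^{(s-r)q+1-q}\,n^{(s-r)q}\,s_n^q,
\]
with $s_n$ now formed from $\wh{f^{(r)}}$. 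Since $n^{(s-r)q}\le 1$ and $s_n^q\le \CN(f^{(r)})^{q-1}s_n$, summation bounds the full tail by $\sigma^{(s-r)q+1-q}\CN(f^{(r)})^q$, and taking $q$-th roots yields the stated $\Oh\p{\sigma^{-r-1+s+1/q}}$. Specialising to $s=0$ gives \eqref{dist_M_star1}, while the degenerate case $s=r=q=1$ collapses both inequalities to equalities and produces $\Oh(1)$ directly via $\dist_1(f^{(r)},B_\sigma^2)\le\sum_n s_n\le \CN(f^{(r)})$.

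\textbf{Anticipated obstacle.} No individual estimate is deep; the real work is the bookkeeping of exponents across the cases $q=1$ vs.\ $q>1$ and $s=r$ vs.\ $s<r$ in \eqref{dist_M_star2}, where the elementary inequalities $s_n^q\le \CN^{q-1}s_n$ and $n^{(s-r)q}\le 1$ become tight and must be applied in the right order. One should also check that the harmless reduction to $\sigma\ge 1$, forced by the admissibility condition $h=1/\sigma\in(0,1]$ in the definition of $\CN$, does not interfere with the $\Oh$-estimates as $\sigma\to\infty$.
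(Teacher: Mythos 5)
Your proposal is correct and follows essentially the same route as the paper: the reduction via Proposition~\ref{prop_dist_representation} to tail integrals, the choice $h=\sigma^{-1}$ so that the blocks $[n/h,(n+1)/h]$, $n\notin\{-1,0\}$, tile $\{\abs{v}\ge\sigma\}$, Hölder with exponents $2/q$ and $2/(2-q)$ on each block, the $\ell^1\hookrightarrow\ell^{q}$ comparison against $\CN(f)$, and the weight $\abs{v}^{-r}\le h^{r}$ to pass to $\wh{f^{(r)}}$. The only (cosmetic) difference is that you prove the general $0\le s\le r$ estimate in one pass by keeping the factor $\abs{v}^{(s-r)q}\le (n\sigma)^{(s-r)q}$, whereas the paper first establishes the case $s=0$ and then obtains \eqref{dist_M_star2} by applying it to $g=f^{(s)}$ with $r$ replaced by $r-s$; you should still record, as the paper does, that $\CN(f^{(r)})<\infty$ forces $v^s\wh f(v)\in L^1(\R)$, so that $\dist_q(f^{(s)},B_\sigma^2)$ is well defined.
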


\begin{proof}
What we want is an estimate of
\begin{equation}\label{eq_wanted_estimate}
\dist_q(f, B_\sigma^2) = \brbigg{\int_{\abs{v}\ge\sigma} \bigabs{\wh{f}(v)}^q\,dv}^{1/q}
= \brBigg{\sum_{n\in\Z\setminus\br{-1,0}} \int_{n/h}^{(n+1)/h} \bigabs{\wh{f}(v)}^q dv}^{1/q}
\end{equation}
for $\sigma \ge 1$ and $h=\sigma^{-1}$.

Indeed, using Hölder's inequality with the exponents $2/(2-q)$ and $2/q$, and then the inequality $\bkbig{\sum \abs{a_n}^\alpha}^{1/\alpha} \le \bkbig{\sum \abs{a_n}^\beta}^{1/\beta}$, $0<\beta\le \alpha< \infty$, for $\alpha=q/2$ and $\beta=1/2$, we find for $q\in [1, 2)$ and $0< h\le 1$,
\begin{align}
&\brBigg{\sum_{n\in\Z\setminus\br{-1,0}} \int_{n/h}^{(n+1)/h} \bigabs{\wh{f}(v)}^q\dv}^{1/q}\notag\\[2ex]
{}\le{}& \brBigg{\sum_{n\in\Z\setminus\br{-1,0}} \rez{h^{1-q/2}}\brbigg{\int_{n/h}^{(n+1)/h} \bigabs{\wh{f}(v)}^2 dv}^{q/2}}^{1/q}\notag\\[2ex]
{}={}&
\brBigg{\sum_{n\in\Z\setminus\br{-1,0}} \brbigg{\rez{h^{2/q-1}}\int_{n/h}^{(n+1)/h} \bigabs{\wh{f}(v)}^2 \dv}^{q/2}}^{1/q}\notag\\[2ex]
{}\le{}&
\sum_{n\in\Z\setminus\br{-1,0}} \brbigg{\rez{h^{2/q-1}}\int_{n/h}^{(n+1)/h} \bigabs{\wh{f}(v)}^2 \dv}^{1/2}\notag\\[2ex]
{}={}&
h^{1-1/q} \,\sum_{n\in\Z\setminus\br{-1,0}} \brbigg{\rez{h} \int_{n/h}^{(n+1)/h} \bigabs{\wh{f}(v)}^2 \dv}^{1/2}. \label{Ungl_Kette}
\end{align}
Trivially, this estimate extends to $q=2$, and so it holds for all $q\in[1,2]$.

Under the hypothesis $f\in\Mn$ we have that $\mathcal N(f)<\infty$ (cf.~\eqref{M_new2}), and we obtain
\[
  \brBigg{\sum_{n\in\Z\setminus\br{-1,0}} \int_{n/h}^{(n+1)/h} \bigabs{\wh{f}(v)}^q \dv}^{1/q}
   \le  h^{1-1/q} \mathcal N(f)\qquad (0< h\le 1).
\]
Now, if $\sigma \ge 1$, then $h$ may be chosen as $\sigma^{-1}$, and doing so, we obtain from \eqref{eq_wanted_estimate},
\[
     \dist_q(f, B_\sigma^2) \le  \mathcal N(f)\sigma^{-1+1/q}\qquad (\sigma\ge 1),
\]
which is the conclusion of statement a).

Under the hypotheses of statement b), we have
\[
  \wh{f^{(r)}}(v) =  (iv)^r \wh{f}(v) \quad \hbox{ a.\,e.};
\]
see \cite[p.~214, Proposition~5.2.19]{Butzer-Nessel_1971}. Hence $f^{(r)}\in \Mn$ implies
\begin{equation}\label{diff_scaled_mod}
  \mathcal N\pbig{f^{(r)}}= \sup_{0<h\le 1} \sum_{n\in\Z\setminus\br{-1,0}}  \brbigg{\rez{h}\int_{n/h}^{(n+1)/h} \bigabs{v^r \wh{f}(v)}^2\,dv}^{1/2}<\infty.
\end{equation}

Now we note that for $n\in \Z\setminus\br{-1,0}$, we have
\[
\int_{n/h}^{(n+1)/h} \bigabs{\wh{f}(v)}^q\dv \le h ^{rq} \int_{n/h}^{(n+1)/h} \bigabs{v^r \wh{f}(v)}^q\dv\,.
\]
Employing the estimate \zit{Ungl_Kette} with the role of $\wh{f}(v)$ taken by $v^r \wh{f}(v)$, we obtain
\begin{multline*}
  \brBigg{\sum_{n\in\Z\setminus\br{-1,0}} \int_{n/h}^{(n+1)/h} \bigabs{\wh{f}(v)}^q \dv}^{1/q}\\
 {}\le h^{r+1-1/q} \sum_{n\in\Z\setminus\br{-1,0}} \brbigg{\rez{h} \int_{n/h}^{(n+1)/h} \bigabs{v^r \wh{f}(v)}^2\dv}^{1/2}.
\end{multline*}
The series on the right-hand can be estimated by \eqref{diff_scaled_mod}, and we obtain \eqref{dist_M_star1} by choosing $h=\sigma^{-1}$ as in the first part of the proof.

Next we note that \zit{diff_scaled_mod} implies that $v^s \wh{f}(v) \in L^1(\R)$ for $s=1, \dots, r$. Thus the distance $\dist_q(f^{(s)}, B_\sigma^2)$ is well defined. Furthermore we observe that, under the hypotheses of \eqref{dist_M_star2}, the derivative $f^{(s)}$ satisfies the hypothesis of statement a) when $r=1$, and the hypotheses of \eqref{dist_M_star1} with $r$ replaced by $r-s$, when $r\ge 2$. Hence \eqref{dist_M_star2} follows from the preceding results.
\end{proof}

Our results for the distance $\dist_2(f, B_\sigma^2)$ in conjunction with Theorem~\ref{thm_equiv_new} imply the following  inclusions. This theorem covers the most important theoretical results of our paper.

\begin{theorem}\label{M_star_compare2_new}
For $r \in\N$ and $1/2<\alpha<r$ we have
\begin{gather}
\Wrtwo \cap C(\R)\subsetneqq\big\{f\in L^2(\R)\cap C(\R)\;:\; D^{\{\alpha\}} f\in L^2(\R)\big\}\label{eq_inclusion_Sobo-Riesz}\\[2ex]
{}={} \Rieszalpha\cap C(\R)\subsetneqq\Lip_r(\alpha) \cap C(\R){}\subsetneqq{}  \Rieszhal \cap \Mn\label{eq_set_v_alphax}\\[2ex]
%{}={}& \big\{f\in L^2(\R)\cap  C(\R)\;:\; |v|^\alpha\widehat f(v)\in L^2(\R)\big\}\\[2ex]
%
{}\subsetneqq{} \Mn\subsetneqq \pbig{\Rieszhal\cap F^2} \cup \Mn
{}\subsetneqq{} \Lip_r\big(\hal) \cap F^2.\label{eq_other_inclusionsx}
\end{gather}
\end{theorem}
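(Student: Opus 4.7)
My plan is to walk the chain from left to right, invoking exactly one previously-established result at each link; essentially no new analytical calculation is needed. The leftmost equality in \eqref{eq_inclusion_Sobo-Riesz}--\eqref{eq_set_v_alphax} is a restatement of the equivalence (i)$\,\Leftrightarrow\,$(iii) in Proposition~\ref{prop_Riesz_existence_equiv}. The accompanying strict inclusion $\Wrtwo\cap C(\R)\subsetneqq \Rieszalpha\cap C(\R)$ follows by combining the identification $\Wrtwo=\Rieszr$ of \eqref{eq_W_r} with the strict inclusion $\Rieszr\cap C(\R)\subsetneqq\Rieszalpha\cap C(\R)$ of \eqref{eq_Walpha_Wbeta_inclusion}, which is legitimate since $\alpha<r$. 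The next link, $\Rieszalpha\cap C(\R)\subsetneqq\Lip_r(\alpha)\cap C(\R)$, is immediate from Corollary~\ref{cor_Lip_inclusions}.

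The only link requiring a separate short argument is $\Lip_r(\alpha)\cap C(\R)\subsetneqq\Rieszhal\cap\Mn$. I would fix $f\in\Lip_r(\alpha)\cap C(\R)$ with $\alpha>1/2$ and pick an auxiliary exponent $\alpha'\in(1/2,\alpha)$. Corollary~\ref{cor_Lip_inclusions} then gives $f\in H^{\alpha'}_2$, which trivially implies $f\in\Rieszhal$ (since $\alpha'>1/2$); since also $f\in H^{\alpha'}_2\cap C(\R)$ with $\alpha'>1/2$, the second inclusion of Proposition~\ref{propa1_star} yields $f\in\Mn$. Strictness is witnessed by the counterexamples from Section~\ref{sec_counterexamples} (notably those underlying Proposition~\ref{prop_counter_last}\,b)).

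For the remaining inclusions in \eqref{eq_other_inclusionsx} the set-theoretic part is trivial. Strictness of $\Rieszhal\cap\Mn\subsetneqq\Mn$ requires exhibiting a function in $\Mn$ for which $\int_\R|v|\,\bigabs{\wh f(v)}^2\dv=\infty$; this is the content of Proposition~\ref{prop_oh}, which refutes the $\oh\pbig{\sigma^{-1/2}}$ improvement inside $\Mn$. For the rightmost inclusion $(\Rieszhal\cap F^2)\cup\Mn\subsetneqq\Lip_r(1/2)\cap F^2$, I split into the two summands: $\Rieszhal\subset\lip_r(1/2)\subset\Lip_r(1/2)$ is Corollary~\ref{cor_Lip_inclusions}, and $\Mn\subset F^2$ is part of Proposition~\ref{propa1_star}. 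The one nontrivial piece, $\Mn\subset\Lip_r(1/2)$, follows by feeding the decay $\dist_2(f,B^2_\sigma)=\Oh\pbig{\sigma^{-1/2}}$ from Proposition~\ref{dist_M_star}\,a) into the implication (ii)$\,\Rightarrow\,$(i) of Theorem~\ref{thm_equiv_new} with $\alpha=1/2$, noting that by Proposition~\ref{prop_dist_representation}\,a) the squared distance is exactly the tail integral appearing in condition~(ii).

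The main obstacle I anticipate is not any single analytical step but the bookkeeping of the strictness assertions: all of them boil down to producing functions separating spaces that differ only by borderline smoothness or by the gap between $\Oh$- and $\oh$-type tail estimates, and all such examples are available from the counterexample propositions in Section~\ref{sec_counterexamples}. Once these are in hand, the theorem is assembled by the forward-chaining outlined above.
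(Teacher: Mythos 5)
Your proposal is correct and follows essentially the same route as the paper's proof: the same chain of citations (Propositions~\ref{prop_Riesz_existence_equiv}, \ref{propa1_star}, \ref{dist_M_star} and Corollary~\ref{cor_Lip_inclusions}, plus Theorem~\ref{thm_equiv_new}\,(ii)$\,\Rightarrow\,$(i) for $\Mn\subset\Lip_r(\hal)$), with only cosmetic differences such as using a single auxiliary exponent $\alpha'$ where the paper uses $\beta$, and invoking Proposition~\ref{prop_oh} (whose underlying function is that of Proposition~\ref{prop_counterex3}) for the witness in $\Mn\setminus\Rieszhal$.
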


\begin{proof}
The equality between \eqref{eq_inclusion_Sobo-Riesz} and \eqref{eq_set_v_alphax} is contained in Proposition~\ref{prop_Riesz_existence_equiv}. Hence the strict inclusion in \eqref{eq_inclusion_Sobo-Riesz} follows by Proposition~\ref{propa1}, and the first inclusion in \eqref{eq_set_v_alphax} was proved in Corollary~\ref{cor_Lip_inclusions}.

As to the second inclusion in \eqref{eq_set_v_alphax}, choose $\beta$ with $\frac12 < \beta < \alpha$. Then by \eqref{eq_Lip_inclusions} and \eqref{sobolev2_star},
\[
  \Lip_r(\alpha)\cap C(\R)\subset \Rieszbeta\cap C(\R) \subset\Mn.
\]
On the other hand, we have again by \eqref{eq_Lip_inclusions} that $\Lip_r(\alpha)\subset \Lip_r(\hal)\subset \Rieszhal$. This yields the second inclusion in  \eqref{eq_set_v_alphax}, and the next two inclusions are obvious.

Since $\Rieszhal\subset \Lip_r(\hal)$ by Corollary~\ref{cor_Lip_inclusions}, it remains to show that  $\Mn\subset\Lip_r(\hal)\cap F^2$. Here we use Proposition~\ref{dist_M_star}, showing $f\in \Mn$ implies $\dist_2(f,B^2_\sigma)=\Oh(\sigma^{-1/2})$, and conclude by Theorem~\ref{thm_equiv_new}\,(i)$\,\Leftrightarrow\,$(ii) that this $\Oh$-condition is equivalent to $f\in \Lip_r(\hal)$. Note that assertion (ii) of Theorem~\ref{thm_equiv_new} can be expressed in terms of the distance functional as $\dist_2(f,B^2_\sigma)=\Oh(\sigma^{-\alpha})$. Further, we have $\Mn\subset F^2$ in view of \eqref{sobolev2_star}.

For the fact that the inclusions in \eqref{eq_set_v_alphax} and \eqref{eq_other_inclusionsx} are strict, see Proposition~\ref{prop_counter_allgemein}\,(i), noting that $\lip_r(\alpha)\subset\Lip_r(\alpha)$, as well as Propositions~\ref{prop_counter_last}\,b), \ref{prop_counterex3}, \ref{prop_counterex4}, \ref{prop_counterex5}.
\end{proof}

It follows from Theorem~\ref{M_star_compare2_new} that $f\in \Mn$ implies $f\in F^2$. More generally, $f^{(s)}\in \Mn$ implies $f\in F^{s,2}$. The same holds with $\Mn$ replaced by $\Lip_r(\alpha)\cap C(\R)$ for any $\alpha>1/2$.

Also by Theorem~\ref{M_star_compare2_new} we have that $f\in \Mn$ implies in particular $f\in\Lip_r(\frac12)$ and hence $\omega_r\big(f;\delta;L^2(\R)\big)=\Oh(\delta^{1/2})$ for $\delta\to 0+$. On the other hand, $\Mn$ is a \emph{proper} subspace of $\Lip_r(\frac12)$, and one may ask, whether the order $\Oh(\delta^{1/2})$ can be improved for $\Mn$-functions, \ie, can $\Oh(\delta^{1/2})$ be replaced by $\oh(\delta^{1/2})$. The answer is \emph{no} as will be shown in the following proposition. %The space $\Lip(\alpha)$ for $\alpha> 1/2$ delivers better approximation orders.

\begin{proposition}\label{prop_oh}
There exists a function $f\in \Mn$ such that $\omega_r(f;\delta; L^2(\R)) \ne \oh\pbig{\delta^{1/2}}$ as $\delta \to0+$, and also $\dist_2(f,B^2_\sigma)\neq \oh\pbig{\sigma^{-1/2}}$ as $\sigma\to\infty$.
\end{proposition}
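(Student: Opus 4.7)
The plan is to construct an explicit $f\in\Mn$ whose Fourier transform is a lacunary sum of unit-width indicator functions placed at geometrically spaced frequencies, so that the distance from $B^2_\sigma$ stays of exact order $\sigma^{-1/2}$ along a sequence. Specifically, I would set
\[
  \wh f(v) := \sum_{k=1}^\infty \frac{1}{\sqrt{N_k}}\,\chi_{[N_k,\,N_k+1]}(v),\qquad N_k := 4^k,
\]
and let $f$ be its inverse Fourier transform. Since $\wh f\in\Lone\cap\Ltwo$, $f$ is well-defined, belongs to $\Ltwo\cap C(\R)$, and satisfies $\norm{f}_{\Ltwo}^2 = \sum_k N_k^{-1} = 1/3$.

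The principal step is verifying $\CN(f)<\infty$. For $h\in(0,1]$ and $I_k:=[N_k,N_k+1]$, if $h\le 1/(N_k+1)$ then $I_k\subset[0,1/h)$ lies inside the excluded slot $n=0$ and contributes nothing to $\Nh(f)$; otherwise $I_k$ meets at most two consecutive slots $[n/h,(n+1)/h]$ with $n\ge 1$, and Cauchy--Schwarz via $\sqrt{y_1}+\sqrt{y_2}\le\sqrt{2(y_1+y_2)}$ bounds bump~$k$'s contribution by $\sqrt{2/(hN_k)}$. Summing only over the active $k$ (those with $N_k\ge 1/h$) and using the geometric spacing $N_k=4^k$ yields $\sum_{N_k\ge 1/h}N_k^{-1/2}\le 2\sqrt{h}$, whence $\Nh(f)\le 2\sqrt{2}$ uniformly in $h$. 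Thus $f\in\Mn$.

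For the conclusion, Proposition~\ref{prop_dist_representation}\,a) gives
\[
  \dist_2(f,B^2_{\sigma})^2 = \int_{\abs{v}\ge\sigma}\abs{\wh f(v)}^2\dv,
\]
and taking $\sigma=N_K$ produces $\dist_2(f,B^2_{N_K})^2 = \sum_{k\ge K}N_k^{-1} = (4/3)N_K^{-1}$, so $\sqrt{N_K}\,\dist_2(f,B^2_{N_K})\to \sqrt{4/3}>0$. Hence $\dist_2(f,B^2_\sigma)\neq \oh\pbig{\sigma^{-1/2}}$ as $\sigma\to\un$. The failure of $\omega_r(f;\delta;\Ltwo)=\oh\pbig{\delta^{1/2}}$ as $\delta\to 0+$ then follows from Theorem~\ref{thm_oh} with $\alpha=\frac12$, which couples the two little-$o$ assertions for every $r\ge 1$.

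The main obstacle is the uniform-in-$h$ bound on $\Nh(f)$: the amplitude $a_k=N_k^{-1/2}$ is pushed as large as the target $\dist_2\asymp\sigma^{-1/2}$ permits, while the spacing $N_k=4^k$ is precisely what makes the geometric series $\sum N_k^{-1/2}$ decay fast enough to compensate for the $\sqrt{1/h}$ factor introduced by the modulation-norm rescaling. Any slower growth of $N_k$ would let $\Nh(f)$ blow up as $h\to 0+$, whereas any smaller $a_k$ would force $\dist_2 = \oh\pbig{\sigma^{-1/2}}$; the present construction sits exactly at the critical boundary.
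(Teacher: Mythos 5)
Your proposal is correct and follows essentially the same strategy as the paper: a lacunary sum of unit-width Fourier bumps with amplitude $\sim(\text{frequency})^{-1/2}$, placed at geometrically spaced frequencies so that $\CN(f)$ is controlled by a geometric series while the tail integral $\int_{|v|\ge\sigma}|\wh f(v)|^2\,dv$ stays of exact order $\sigma^{-1}$ along a subsequence, with Theorem~\ref{thm_oh} coupling the two little-$o$ assertions. The only (immaterial) differences are that the paper reuses the $\sinc^2$-bump function of Proposition~\ref{prop_counterex3} supported near $2^n$ rather than indicator bumps at $4^k$, and that it establishes the modulus-of-smoothness statement directly via the lower bound in \eqref{Delta_bounds} and transfers to $\dist_2$, whereas you compute the $\dist_2$ failure directly (which for your $f$ is the simpler half) and transfer to $\omega_r$.
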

\begin{proof} See Section~\ref{sec_counterexamples}, after the proof of Proposition~\ref{prop_counterex3}.\end{proof}

\section{The extension of basic relations from  $B_\sigma^2$ to larger spaces}\label{sec_extensions}

\subsection{The classical sampling formula }
The extension of the classical Whittaker-Kotel'nikov-Shannon sampling theorem (the WKS, or the CSF in the terminology of \cite{Butzer-Ferreira_et_al_2011b}) to non-bandlimited functions, known for some time now (see, e.\,g., \cite{Brown_1967,Brown_1968,Weiss_1963}, \cite[Section 11.3]{Higgins_1996}, \cite[Sections 3.5, 3.8]{Zayed_1993}), reads:

\begin{theorem}\label{ASF}
Let $f\in F^2\cap S_h^2$ with $h>0$. Then
\begin{equation}\label{ASF1}
   f(t) = \sum_{k\in\Z} f(hk)\,\sinc(h^{-1}t-k) + (\Rhf{WKS})(t)\qquad (t\in\R),
\end{equation}
where
\begin{equation}\label{ASF2}
  (\Rhf{WKS})(t) = \rez{\sqrt{2\pi}}\sum_{k\in\Z}\li(1-e^{-i 2\pi kt/h}\ri) \int_{(2k-1)\pi/h}^{(2k+1)\pi/h} \wh{f}(v)\,e^{i vt}\,dv\,.
\end{equation}
Furthermore,
\begin{equation}\label{ASF3}
  \bigabs{(\Rhf{WKS})(t)}\le\sqrt{\frac{2}{\pi}}\int_{\abs{v}\ge\pi/h} \bigabs{\wh{f}(v)}\,dv = \sqrt{\frac{2}{\pi}}\, \dist_1(f, B_{\pi/h}^2).
\end{equation}
The sampling series converges absolutely and uniformly on $\R$.
\end{theorem}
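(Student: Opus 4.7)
The plan is to derive the formula by representing both sides through Fourier inversion on $\R$ and matching them up using Poisson summation, with the difference giving the stated remainder.

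First I would use the fact that $f\in F^2$ guarantees the inversion formula \eqref{eq_Fourier_inversion} and permits us to split the integration range into the contiguous intervals $I_k:=[(2k-1)\pi/h,(2k+1)\pi/h]$, $k\in\Z$, so that
\[
  f(t)=\frac{1}{\sqrt{2\pi}}\sum_{k\in\Z}\int_{I_k}\wh f(v)e^{ivt}\dv.
\]
Next I would transform the sampling series. Since $\sinc$ belongs to $B^2_\pi$, its Fourier transform is supported in $[-\pi,\pi]$, so that $\sinc(h^{-1}t-k)$ admits the representation
\[
  \sinc\p{h^{-1}t-k}=\frac{h}{2\pi}\int_{-\pi/h}^{\pi/h}e^{iv(t-hk)}\dv.
\]
Multiplying by $f(hk)$ and summing formally over $k$, one can interchange sum and integral and apply the Poisson summation formula $h\sum_k f(hk)e^{-ivhk}=\sqrt{2\pi}\sum_k\wh f\bigl(v+2\pi k/h\bigr)$, valid on $[-\pi/h,\pi/h]$, to obtain
\[
  \sum_{k\in\Z}f(hk)\sinc\p{h^{-1}t-k}
  =\frac{1}{\sqrt{2\pi}}\sum_{k\in\Z}e^{-i2\pi kt/h}\int_{I_k}\wh f(v)e^{ivt}\dv.
\]
Subtracting this from the Fourier inversion representation of $f(t)$ yields precisely \eqref{ASF2}, the $k=0$ term vanishing because $1-e^0=0$.

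For the pointwise bound \eqref{ASF3}, I would estimate $|1-e^{-i2\pi kt/h}|\le 2$ and $|e^{ivt}|=1$, which gives
\[
  \bigabs{(\Rhf{WKS})(t)}\le \frac{2}{\sqrt{2\pi}}\sum_{k\neq 0}\int_{I_k}\bigabs{\wh f(v)}\dv
  =\sqrt{\frac{2}{\pi}}\int_{\abs v\ge\pi/h}\bigabs{\wh f(v)}\dv,
\]
and the right-hand side equals $\sqrt{2/\pi}\,\dist_1(f,B^2_{\pi/h})$ by Proposition~\ref{prop_dist_representation}\,a) with $q=1$. Absolute and uniform convergence of the sampling series follows from $f\in S_h^2$ via Cauchy--Schwarz,
\[
  \sum_{k\in\Z}\bigabs{f(hk)\sinc\p{h^{-1}t-k}}
  \le\biggbr{\sum_{k\in\Z}\abs{f(hk)}^2}^{1/2}\!\!\biggbr{\sum_{k\in\Z}\bigabs{\sinc\p{h^{-1}t-k}}^2}^{1/2},
\]
together with the classical identity $\sum_{k\in\Z}\abs{\sinc(x-k)}^2=1$ for all $x\in\R$.

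The one delicate step in the plan is the Poisson summation identity under merely $f\in F^2\cap S_h^2$: the interchange of summation and integration requires justification, since $f\in L^1$ is not assumed. I would argue this rigorously by viewing $\phi(v):=\sum_k\wh f(v+2\pi k/h)$ as an $L^1$-function on the period $[-\pi/h,\pi/h]$ (convergent there because $\wh f\in\Lone$), and identifying its $h$-spaced Fourier coefficients with the samples $f(hk)$ via \eqref{eq_Fourier_inversion} applied to $f(hk)=\frac{1}{\sqrt{2\pi}}\int_\R\wh f(v)e^{ivhk}\dv=\frac{1}{\sqrt{2\pi}}\int_{-\pi/h}^{\pi/h}\phi(v)e^{ivhk}\dv$. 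The $\ell^2$-summability of $(f(hk))$ furnished by $f\in S_h^2$ then places $\phi$ in $L^2$ of the period, and the Fourier series representation of $\phi$ in $L^2$-sense — coupled with absolute convergence inside the bounded integral $\int_{-\pi/h}^{\pi/h}\,\cdot\, e^{ivt}\dv$ — legitimates the needed interchange.
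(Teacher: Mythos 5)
The paper does not actually prove Theorem~\ref{ASF}; it cites Brown, Weiss, Higgins and Zayed for it. Your argument is correct and is precisely the classical proof from those sources: Fourier inversion of $f$ split over the intervals $[(2k-1)\pi/h,(2k+1)\pi/h]$, periodization of $\wh f$ matched to the sampling series via Poisson summation, and the trivial bound $\bigl|1-e^{-i2\pi kt/h}\bigr|\le 2$ for the remainder; the one delicate interchange is handled correctly by your closing observation that $\wh f\in L^1(\R)$ makes the periodized function integrable on the period while $(f(hk))_{k}\in\ell^2(\Z)$ identifies it, through its Fourier coefficients, with an $L^2$ function whose Fourier series may be integrated term by term against the bounded exponential.
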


Let us note that the error bound in \eqref{ASF3} is sharp in the sense that there exists an \emph{extremal} function $f$ and a point $t\in\R$ for which equality holds in \eqref{ASF3}. Such an extremal is, \eg, given by $\sinc(2h^{-1}t-1)$. See \cite{Brown_1967}, \cite[p.~92]{Zayed_1993}, and for a more detailed discussion of extremals \cite[pp.~119--120]{Higgins_1996}.

If $f\in B^2_\sigma$ for some $\sigma\le \pi/h$, then $\dist_1(f, B_{\pi/h}^2)=0$ and so WKS is an immediate corollary.

With the help of \eqref{ASF3} and Proposition~\ref{dist_M_star}\,b), we obtain the following rate of convergence for the remainder.

\begin{corollary}\label{ASF_Sob}
If $f\in W^{m,2}(\R)\cap C(\R)$ and $f^{(m)}\in \Mn$ for some $m\in\N$, then
\[
     \absbig{(\Rhf{WKS})(t)}=\Oh(h^{m})\qquad (h\to 0+).
\]
\end{corollary}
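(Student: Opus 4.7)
The plan is a direct two-step chaining of earlier results. First, verify that the hypotheses entitle us to use Theorem~\ref{ASF}: from $f\in W^{m,2}(\R)\cap C(\R)$ the inclusion chain \eqref{sobolev2_star} immediately gives $f\in F^2\cap S_h^2$ for every $h>0$, so the sampling expansion \eqref{ASF1} holds together with the pointwise error bound
\[
  \bigabs{(\Rhf{WKS})(t)}\le \sqrt{\tfrac{2}{\pi}}\,\dist_1\pbig{f,B^2_{\pi/h}}\qquad (t\in\R).
\]

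Second, invoke Proposition~\ref{dist_M_star}\,b) with $r=m$ and $q=1$. The hypotheses $f\in W^{m,2}(\R)\cap C(\R)$ and $f^{(m)}\in \Mn$ are exactly those required there, and the conclusion in the case $q=1$ (note that $-r-1+1/q=-m$) reads
\[
  \dist_1\pbig{f,B^2_\sigma}=\Oh\pbig{\sigma^{-m}}\qquad (\sigma\to\infty).
\]
Setting $\sigma=\pi/h$ and substituting into the bound above yields
\[
  \bigabs{(\Rhf{WKS})(t)}\le \sqrt{\tfrac{2}{\pi}}\,\Oh\pbig{(\pi/h)^{-m}}=\Oh\pbig{h^{m}}\qquad (h\to 0+),
\]
uniformly in $t\in\R$, which is the claim. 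There is no real obstacle: the only point to watch is that the case $s=r=q=1$ in \eqref{dist_M_star2} of Proposition~\ref{dist_M_star}\,b) is the exceptional $\Oh(1)$ case, but we are applying part b) to estimate $\dist_q(f,B^2_\sigma)$ itself (corresponding to $s=0$), not $\dist_q(f^{(s)},B^2_\sigma)$, so the generic decay rate $\Oh(\sigma^{-r-1+1/q})$ applies without exception.
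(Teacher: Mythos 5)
Your proof is correct and is essentially the paper's own argument: the paper derives this corollary precisely by combining the error bound \eqref{ASF3} with Proposition~\ref{dist_M_star}\,b) for $q=1$ and $\sigma=\pi/h$. Your extra remark correctly notes that the exceptional $\Oh(1)$ case in \eqref{dist_M_star2} concerns $\dist_q(f^{(s)},B^2_\sigma)$ and is irrelevant here, since the estimate \eqref{dist_M_star1} for $\dist_1(f,B^2_\sigma)$ itself gives the full rate $\Oh(\sigma^{-m})$.
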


% -------------------------------------------------------------------------------------------------------------------

\subsection{Bernstein's inequality}\label{bernst_ineq}

The aim of this section is to generalize the well-known Bernstein inequality for $f\in B^2_\sigma$, namely,
\begin{equation}\label{eq_Bernstein}
    \norm{f^{(s)}}_{L^{2}(\R)}\le \sigma^s \norm{f}_{L^{2}(\R)} \qquad (s\in\N;\sigma>0)
\end{equation}
beyond bandlimited functions.
\begin{theorem}\label{bernst_L2}
Let $f\in W^{s,2}(\R)\cap F^{s,2}$ for some $s\in\N$. Then, for any $\sigma>0$,
\[
\|f^{(s)}\|_{L^2(\R)}\le\sigma^s \|f\|_{L^2(\R)} +  \dist_2(f^{(s)}, B_\sigma^2).
\]
\end{theorem}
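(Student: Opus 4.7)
My plan is to prove the extended Bernstein inequality by splitting $\widehat{f^{(s)}}$ in the frequency domain at the band-edge $\sigma$, applying the classical Bernstein inequality to the low-frequency part and identifying the high-frequency tail with the distance functional.

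Concretely, since $f\in W^{s,2}(\R)\cap F^{s,2}$, Proposition~\ref{propw1} together with \eqref{eq_derivative_representation} gives $\widehat{f^{(s)}}(v)=(iv)^s\widehat{f}(v)$ a.\,e., so by Plancherel
\[
   \bignorm{f^{(s)}}_{\Ltwo}^2=\int_\R |v|^{2s}\bigabs{\widehat{f}(v)}^2\dv
   =\int_{\abs v\le\sigma}|v|^{2s}\bigabs{\widehat{f}(v)}^2\dv +\int_{\abs v>\sigma}|v|^{2s}\bigabs{\widehat{f}(v)}^2\dv.
\]
The first integral is bounded by $\sigma^{2s}\int_{|v|\le\sigma}|\widehat f(v)|^2\dv\le \sigma^{2s}\|f\|_{L^2(\R)}^2$, and Proposition~\ref{prop_dist_representation}\,b) (with $q=2$, $k=s$) identifies the second integral as $\bk{\dist_2\p{f^{(s)},B_\sigma^2}}^2$.

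Combining the two bounds, I obtain
\[
   \bignorm{f^{(s)}}_{\Ltwo}\le \sqrt{\sigma^{2s}\|f\|_{\Ltwo}^2+\bk{\dist_2\p{f^{(s)},B_\sigma^2}}^2}
   \le \sigma^s\|f\|_{\Ltwo}+\dist_2\p{f^{(s)},B_\sigma^2},
\]
using $\sqrt{a^2+b^2}\le a+b$ for $a,b\ge 0$. Equivalently, one may define $g\in B_\sigma^2$ by $\widehat g:=\widehat f\cdot\chi_{[-\sigma,\sigma]}$: then the classical Bernstein inequality \eqref{eq_Bernstein} gives $\|g^{(s)}\|_{\Ltwo}\le\sigma^s\|g\|_{\Ltwo}\le\sigma^s\|f\|_{\Ltwo}$, while $\|f^{(s)}-g^{(s)}\|_{\Ltwo}=\dist_2(f^{(s)},B_\sigma^2)$ by Proposition~\ref{prop_dist_representation}\,b), and the triangle inequality concludes the proof.

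There is essentially no obstacle here: the work has already been front-loaded into Propositions~\ref{propw1} and~\ref{prop_dist_representation}, which guarantee both the Fourier-analytic representation of $f^{(s)}$ and the explicit formula for $\dist_2(f^{(s)},B_\sigma^2)$. The only point to check is that the hypothesis $f\in W^{s,2}(\R)\cap F^{s,2}$ ensures both $v^s\widehat f(v)\in\Ltwo$ (so that all integrals above make sense) and that $g\in B_\sigma^2$ truly has $g^{(s)}\in B_\sigma^2$, which is immediate from the support of $\widehat g$.
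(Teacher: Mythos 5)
Your proposal is correct and follows essentially the same route as the paper: the paper defines $f_1$ via the high-frequency part $\int_{\abs{v}\ge\sigma}\wh f(v)e^{itv}\dv$ and $f_0:=f-f_1\in B_\sigma^2$, applies the classical Bernstein inequality to $f_0$, and identifies $\norm{f_1^{(s)}}_{\Ltwo}$ with $\dist_2(f^{(s)},B_\sigma^2)$ --- which is exactly your ``equivalently'' paragraph with $g=f_0$. Your first variant, bounding $\norm{f^{(s)}}_{\Ltwo}$ directly via Plancherel and $\sqrt{a^2+b^2}\le a+b$, is a slight streamlining that bypasses \eqref{eq_Bernstein} altogether, but it rests on the same frequency split at $\sigma$.
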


\begin{proof}
Define
\[
 f_1(t) := \rez{\sqrt{2\pi}} \int_{\abs{v}\ge\sigma} \wh{f}(v) e^{itv}\dv
\]
and $f_0:=f-f_1$. Then $f_0\in B_\sigma^2,$ and so by Bernstein's inequality \eqref{eq_Bernstein}
\begin{equation}\label{bernst4}
   \|f_0^{(s)}\|_{L^2(\R)} \le \sigma^s \|f_0\|_{L^2(\R)}\,.
\end{equation}
Since
\[
  f_1^{(s)}(t) = \frac{1}{\sqrt{2\pi}}\int_{\abs{v}\ge \sigma} (iv)^s\wh f(v) e^{ivt}\dt
\]
by \eqref{eq_derivative_representation}, then we have by the isometry of the Fourier transform in $L^2(\R)$ and Proposition~\ref{prop_dist_representation}\,b),
\begin{equation}\label{bernst5}
  \|f_1^{(s)}\|_{L^2(\R)} =  \biggbr{\int_{\abs{v}\ge\sigma}\bigabs{v^s\wh{f}(v)}^2 \dv}^{1/2} = \dist_2(f^{(s)}, B_\sigma^2)\,.
\end{equation}
Again by the isometry
\begin{align*}
\bignorm{f^{(s)}}_{L^2(\R)}^2 {}={}& \bignorm{(iv)^s\wh{f}(v)}_{L^2(\R)}^2 = \int_{\abs{v}\le\sigma} \bigabs{v^s\wh{f}(v)}^2\dv +
                                                                            \int_{\abs{v}\ge\sigma} \bigabs{v^s\wh{f}(v)}^2\dv \\[2ex]
{}={}&\Bignorm{\wh{f_0^{(s)}}}_{L^2(\R)}^2 + \Bignorm{\wh{f_1^{(s)}}}_{L^2(\R)}^2
=
\bignorm{f_0^{(s)}}_{L^2(\R)}^2 + \bignorm{f_1^{(s)}}_{L^2(\R)}^2\,,
\end{align*}
which gives
\bgl{bernst6} \bignorm{f_0^{(s)}}_{L^2(\R)} \le \bignorm{f^{(s)}}_{L^2(\R)}\,.
\egl
Now, combining \eqref{bernst4}--\eqref{bernst6}, we conclude that
\[
\|f^{(s)}\|_{L^2(\R)}\le \|f_0^{(s)}\|_{L^2(\R)} + \|f_1^{(s)}\|_{L^2(\R)} \le \sigma^s \|f\|_{L^2(\R)} + \dist_2(f^{(s)}, B_\sigma^2)
\]
as was to be shown.
\end{proof}

\begin{corollary}\label{bernst_Sob}
Let $m,s\in\N$ with $m\ge s$ and $f\in W^{m,2}(\R)\cap F^{s,2}$.\smallskip

\noindent a) If $f^{(m)}\in \Mn$, then
\[
     \|f^{(s)}\|_{L^p(\R)} \le \sigma^s \|f\|_{L^p(\R)}+ \Oh(\sigma^{-1/2-m+s})\qquad(\sigma\to\infty).
\]

\noindent
b) If $f^{(m)}\in \Rieszalpha$, then for $\alpha >0$,
\[
     \|f^{(s)}\|_{L^p(\R)} \le \sigma^s \|f\|_{L^p(\R)}+ \oh(\sigma^{-\alpha-m+s})\qquad(\sigma\to\infty).
\]
\end{corollary}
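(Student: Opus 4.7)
The plan is to combine Theorem~\ref{bernst_L2} with the two distance estimates we have already proved: Proposition~\ref{dist_M_star}\,b) for part~(a), and Proposition~\ref{prop_Riesz_and_oh} for part~(b). The hypothesis $f\in W^{m,2}(\R)\cap F^{s,2}$ with $m\ge s$ ensures that $f\in W^{s,2}(\R)\cap F^{s,2}$, so Theorem~\ref{bernst_L2} applies and yields
\[
  \|f^{(s)}\|_{L^2(\R)}\le \sigma^s\|f\|_{L^2(\R)}+\dist_2\pbig{f^{(s)},B^2_\sigma}\qquad(\sigma>0).
\]
Everything therefore reduces to bounding the remainder $\dist_2\pbig{f^{(s)},B^2_\sigma}$ under the two hypotheses on $f^{(m)}$.

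For part~(a), I would invoke Proposition~\ref{dist_M_star}\,b) with $r=m$ and $q=2$, using $f^{(m)}\in\Mn$, to obtain
\[
  \dist_2\pbig{f^{(s)},B^2_\sigma}=\Oh\pbig{\sigma^{-m-1+s+1/2}}=\Oh\pbig{\sigma^{-1/2-m+s}}\qquad(\sigma\to\infty).
\]
Substituting this into the Bernstein-type inequality above gives the claim of~(a).

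For part~(b), the idea is to apply Proposition~\ref{prop_Riesz_and_oh} to $g:=f^{(s)}$. Since $g^{(m-s)}=f^{(m)}\in\Rieszalpha$, Proposition~\ref{prop_Riesz_and_oh} (with the role of $m$ played by $m-s$) yields
\[
  \int_{\abs{v}\ge\sigma}\bigabs{\wh{g}(v)}^2\dv
  =\int_{\abs{v}\ge\sigma}\abs{v}^{2s}\bigabs{\wh{f}(v)}^2\dv
  =\oh\pbig{\sigma^{-2\alpha-2(m-s)}}\qquad(\sigma\to\infty).
\]
By Proposition~\ref{prop_dist_representation}\,b) the left-hand side is $\bkbig{\dist_2(f^{(s)},B^2_\sigma)}^2$, so taking square roots gives $\dist_2(f^{(s)},B^2_\sigma)=\oh\pbig{\sigma^{-\alpha-m+s}}$, and substituting into the inequality from Theorem~\ref{bernst_L2} proves~(b).

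The calculation itself is routine; the only delicate point is the bookkeeping of exponents when transferring the hypothesis from $f^{(m)}$ to the auxiliary function $g=f^{(s)}$ in part~(b), and checking that all required regularity (in particular $g\in\Ltwo$ and $g^{(m-s)}\in\Ltwo$) is guaranteed by $f\in W^{m,2}(\R)$ so that Proposition~\ref{prop_Riesz_and_oh} may legitimately be applied. Once this is in place, both conclusions follow by direct substitution, and no further analysis is required.
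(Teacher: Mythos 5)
Your proposal is correct and follows essentially the same route as the paper: Theorem~\ref{bernst_L2} reduces everything to estimating $\dist_2(f^{(s)},B^2_\sigma)$, part~a) is Proposition~\ref{dist_M_star}\,b) with $q=2$, $r=m$, and part~b) applies Proposition~\ref{prop_Riesz_and_oh} (via Proposition~\ref{prop_dist_representation}\,b)) to $g=f^{(s)}$ with $m$ replaced by $m-s$, exactly as the paper does. Your reading of the $L^p$ norms in the statement as $L^2$ is the intended one.
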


When replacing the space $\Mn$ by $\Rieszhal$, then the order $\Oh(\sigma^{-1/2-m+s})$ is improved to $\oh(\sigma^{-1/2-m+s})$. More generally, for $\alpha < \frac12$, the order in a) is better than the order in b), whereas far $\alpha\ge 1/2$ the order in b) is better.

\begin{proof} We only need an estimate for $\dist_2(f^{(s)}, B_\sigma^2)$.
Part~a) follows from Proposition~\ref{dist_M_star}\,b) for $q=2$ and $r=m$. For b) we rewrite Proposition~\ref{prop_Riesz_and_oh} in terms of the distance functional, namely,
\begin{equation}\label{eq_H_implies_dist}
  f^{(m)}\in \Rieszalpha \implies \dist_2(f,B_\sigma^2)=\oh\pbig{\sigma^{-\alpha-m}}\qquad (\sigma\to\infty).
\end{equation}
Applying this  implication to $g=f^{(s)}$ and replacing $m$ by $m-s$ yields
\begin{align*}
  f^{(m)}=g^{(m-s)}\in \Rieszalpha%\\[2ex]
  {}\implies{} \dist_2(f^{(s)},B_\sigma^2)=\dist_2(g,B_\sigma^2)=\oh\pbig{\sigma^{-\alpha-m+s}}\qquad(\sigma\to\infty).%\qedhere
\end{align*}
\end{proof}

% -------------------------------------------------------------------------------------------------------------------------

\subsection{Nikol'ski{\u{\i}}'s inequality}
Nikol'ski{\u{\i}} \cite[pp.~123--124]{Nikolskii-1975} proved:
\begin{trivlist}\item[]
\it Let $f\in B_\sigma^2$. Then for any $h>0$,
\begin{equation}\label{nikol0}
  \biggbr{h\sum_{k\in\Z} \abs{f(hk)}^2}^{1/2}\le(1+ h\sigma) \|f\|_{\Ltwo}.
\end{equation}
\end{trivlist}
This result follows easily from Proposition~\ref{propw2} with the help of Bernstein's inequality.

The extension to non-bandlimited functions is just the assertion of Proposition~\ref{propw2}. Combining now this proposition with Theorem~\ref{bernst_L2}, we obtain the following statements.

\begin{theorem}\label{Nikol}
Let $f\in W^{1,2}(\R)\cap F^{1,2}$. Then, for any $h>0$ and $\sigma>0$, we have
\begin{equation*}
 \bigg\{h\sum_{k\in\Z} \abs{f(hk)}^2\bigg\}^{1/2}\le (1+ h\sigma) \|f\|_{L^2(\R)} + h \dist_2(f', B_\sigma^2)\,.
\end{equation*}
\end{theorem}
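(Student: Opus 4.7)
The plan is to chain together the two building blocks that the paragraph preceding the theorem already advertises, namely Proposition~\ref{propw2} (the sampling bound in terms of $\|f\|_{L^2}$ and $\|f'\|_{L^2}$) and the non-bandlimited Bernstein inequality of Theorem~\ref{bernst_L2}. Both hypotheses needed are present: $f\in W^{1,2}(\R)\cap F^{1,2}$ guarantees in particular $f\in W^{1,2}\cap C(\R)$ (since $F^{1,2}\subset C(\R)$), which is exactly what Proposition~\ref{propw2} requires, and it is also precisely the hypothesis of Theorem~\ref{bernst_L2} in the case $s=1$.

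First I would apply Proposition~\ref{propw2} to obtain, for any $h>0$,
\[
   \bigg\{h\sum_{k\in\Z}\abs{f(hk)}^2\bigg\}^{1/2} \le \|f\|_{L^2(\R)} + h\,\|f'\|_{L^2(\R)}.
\]
Next, I would invoke Theorem~\ref{bernst_L2} with $s=1$, which yields for every $\sigma>0$
\[
   \|f'\|_{L^2(\R)} \le \sigma\,\|f\|_{L^2(\R)} + \dist_2(f',B_\sigma^2).
\]
Substituting the second estimate into the first and collecting the terms proportional to $\|f\|_{L^2(\R)}$ gives the factor $(1+h\sigma)$, completing the claim.

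There is no genuine obstacle here; the proof is a two-line combination of prior results. The only point that deserves a brief verification is that the hypothesis $f\in F^{1,2}$ is used (and only used) to license the application of Theorem~\ref{bernst_L2}, while the hypothesis $f\in W^{1,2}(\R)$ supplies both the continuous representative needed in Proposition~\ref{propw2} and the integrability of $f'$. Once these checks are made, the inequality follows by direct substitution, so the write-up should be essentially the display chain above.
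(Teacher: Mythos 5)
Your proof is correct and is exactly the argument the paper intends: the sentence preceding the theorem states that the result follows by combining Proposition~\ref{propw2} with Theorem~\ref{bernst_L2} (for $s=1$), which is precisely your two-step substitution. The hypothesis checks you note (continuity of $f$ from $F^{1,2}\subset C(\R)$ for Proposition~\ref{propw2}, and $f\in W^{1,2}\cap F^{1,2}$ for Theorem~\ref{bernst_L2}) are the right ones.
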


With the help of Proposition~\ref{dist_M_star}, we obtain the following corollary.

\begin{corollary}\label{Nikol_Sob}
Let $f\in W^{m,2}(\R)\cap F^{1,2}$ for some $m\in\N$.\smallskip

\noindent a) If $f^{(m)}\in \Mn$, then
\[
     \bigg\{h\sum_{k\in\Z} \abs{f(hk)}^2\bigg\}^{1/2} \le (1+ h\sigma)\|f\|_{L^2(\R)}+ h\Oh(\sigma^{-m+1/2})\qquad(\sigma\to\infty).
\]

\noindent
b) If $f^{(m)}\in \Rieszalpha$, then for $\alpha >0$,
\[
     \bigg\{h\sum_{k\in\Z} \abs{f(hk)}^2\bigg\}^{1/2} \le (1+ h\sigma)\|f\|_{L^2(\R)}+ h\oh(\sigma^{-\alpha-m+1})\qquad(\sigma\to\infty).
\]
\end{corollary}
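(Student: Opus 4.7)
The plan is to reduce everything to an estimate for $\dist_2(f',B_\sigma^2)$ by invoking Theorem~\ref{Nikol}, and then to plug in the two distance bounds that are already in hand: Proposition~\ref{dist_M_star}\,b) for part a), and the ``Riesz $\Rightarrow$ little-oh distance'' implication \eqref{eq_H_implies_dist} (extracted in the proof of Corollary~\ref{bernst_Sob} from Proposition~\ref{prop_Riesz_and_oh}) for part b). This mirrors exactly the relationship between Corollary~\ref{bernst_Sob} and Theorem~\ref{bernst_L2}.

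For part~a), I would apply Proposition~\ref{dist_M_star}\,b) with $q=2$, $s=1$ and $r=m$ to the derivative $f'$, using the hypothesis $f^{(m)}\in\Mn$. Since $q=2\neq 1$, we are in the ``otherwise'' branch of \eqref{dist_M_star2}, which yields
\[
  \dist_2\bigl(f',B_\sigma^2\bigr)=\Oh\bigl(\sigma^{-m-1+1+1/2}\bigr)=\Oh\bigl(\sigma^{-m+1/2}\bigr)\qquad(\sigma\to\infty).
\]
Substituting this into the inequality of Theorem~\ref{Nikol} gives the bound claimed in a).

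For part~b), the idea is to apply \eqref{eq_H_implies_dist} with $f$ replaced by $g:=f'$ and $m$ replaced by $m-1$: the hypothesis $f^{(m)}\in\Rieszalpha$ is precisely $g^{(m-1)}\in\Rieszalpha$, so Proposition~\ref{prop_Riesz_and_oh} combined with Proposition~\ref{prop_dist_representation}\,a) (applied to $g=f'\in F^2$, which holds because $f\in F^{1,2}$) yields
\[
  \dist_2\bigl(f',B_\sigma^2\bigr)=\oh\bigl(\sigma^{-\alpha-(m-1)}\bigr)=\oh\bigl(\sigma^{-\alpha-m+1}\bigr)\qquad(\sigma\to\infty).
\]
(The case $m=1$ is harmless: $g^{(0)}=f'\in\Rieszalpha$ directly gives $\dist_2(f',B_\sigma^2)=\oh(\sigma^{-\alpha})$.) Plugging this into Theorem~\ref{Nikol} produces the bound in b).

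There is no real obstacle here; the only point that needs a little care is bookkeeping of indices when shifting $f\mapsto f'$ in \eqref{eq_H_implies_dist} and in Proposition~\ref{dist_M_star}\,b), and the verification that the hypotheses of those two results really do follow from $f\in W^{m,2}(\R)\cap F^{1,2}$ (which they do, since $W^{m,2}\subset W^{1,2}$ for $m\ge 1$ and $f'\in F^2$ whenever $f\in F^{1,2}$).
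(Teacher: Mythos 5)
Your proposal is correct and is exactly the argument the paper intends: the paper derives this corollary from Theorem~\ref{Nikol} by inserting the distance estimates of Proposition~\ref{dist_M_star}\,b) (with $q=2$, $s=1$, $r=m$) for part a) and the implication \eqref{eq_H_implies_dist} applied to $f'$ for part b), just as in the proof of Corollary~\ref{bernst_Sob}. The index bookkeeping and the verification of hypotheses in your write-up are all accurate.
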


Comparing the orders in a) and b), one sees right off that for $\alpha\ge \frac 12$ the order in b) is better than in a) but for $0<\alpha <\frac12$ it is weaker.

\subsection{The reproducing kernel formula}
The well known reproducing kernel formula in Bernstein spaces (see \eg \cite{Butzer-Ferreira_et_al_2011b}) reads:
\begin{trivlist}\item[]
{\it If $f\in B_\sigma^2$, then, for $0<h\le \pi/\sigma,$ \bgl{RKF1} f(z) = \rez{h}\int_\R f(u)\,\sinc(h^{-1}(z-u))\du \qquad (z\in\C). \egl }
\end{trivlist}
Its extension to non-bandlimited functions was first studied in \cite{Butzer_et_al_2014}, thus:

\begin{theorem}
Let $f\in F^2\cap S^2_h$ for some $h>0$. Then \bgl{RKFtwo1} f(t)\,=\, \frac1h\int_\R f(u)\sinc(h^{-1}(t-u))du + (\Rhf{RKF})(t) \qquad (t\in\R),
\egl
where the remainder $\Rhf{RKF}$ is expressed in terms of the remainder $\Rhf{WKS}$ for the approximate
sampling formula \eqref{ASF1} by
\bgl{RKFtwo2}
  (\Rhf{RKF})(t) = (\Rhf{WKS})(t) -\frac1h \int_\R (\Rhf{WKS})(u)\sinc(h^{-1}(t-u))\du\,.
\egl
Furthermore,
\bgl{RKFtwo3}
 \bigabs{(\Rhf{RKF})(t)} \le \frac{1}{\sqrt{2\pi}} \dist_1(f,B^2_{\pi/h}). %+\rez{\sqrt{h}}\big\|\Rhf{WKS}\|_{L^2(\R)}\,.
\egl
\end{theorem}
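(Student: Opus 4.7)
The plan is to combine the Whittaker--Kotel'nikov--Shannon sampling formula of Theorem~\ref{ASF} with the classical reproducing kernel formula \eqref{RKF1} on $B^2_{\pi/h}$ to obtain both \eqref{RKFtwo1} and the representation \eqref{RKFtwo2}, and then to derive the bound \eqref{RKFtwo3} by a direct Fourier-side computation.

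First I would derive \eqref{RKFtwo1} and \eqref{RKFtwo2} in one stroke. Apply Theorem~\ref{ASF} to $f$ and set
\[
  g(u) := \sum_{k\in\Z} f(hk)\,\sinc(h^{-1}u - k) = f(u) - (\Rhf{WKS})(u).
\]
Since $(f(hk))_{k\in\Z}\in \ell^2(\Z)$ by $f\in S_h^2$, the sampling series converges in $L^2(\R)$ to an element $g\in B^2_{\pi/h}$. The classical reproducing kernel formula \eqref{RKF1}, applied to $g$ with step $h = \pi/\sigma$ for $\sigma=\pi/h$, then gives
\[
  g(t) = \frac{1}{h}\int_\R g(u)\,\sinc(h^{-1}(t-u))\du.
\]
Because $f$ and $\Rhf{WKS}$ both lie in $L^2(\R)$ and $\sinc(h^{-1}(t-\,\cdot\,))\in L^2(\R)$, the integral on the right splits by linearity; substituting $g = f - \Rhf{WKS}$ on both sides and rearranging produces the formula \eqref{RKFtwo1} with the remainder expressed by \eqref{RKFtwo2}.

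Second, for the bound \eqref{RKFtwo3}, I would proceed directly in the Fourier domain. Set $\phi(t) := \frac{1}{h}\sinc(h^{-1}t)$. The scaling rule of the Fourier transform together with the known transform of $\sinc$ gives $\wh\phi(v) = \frac{1}{\sqrt{2\pi}}$ for $\abs{v}<\pi/h$ and $\wh\phi(v)=0$ for $\abs{v}>\pi/h$. Since $f$ and $\phi$ both belong to $L^2(\R)$, the $L^2$ convolution theorem yields $\wh{(f\star\phi)}(v) = \sqrt{2\pi}\,\wh f(v)\,\wh\phi(v) = \wh f(v)\,\chi_{(-\pi/h,\pi/h)}(v)$ almost everywhere. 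Because $f\in F^2$ implies $\wh f\in L^1(\R)$, pointwise Fourier inversion applies; combining this with \eqref{eq_Fourier_inversion} for $f$ itself, and noting that the convolution integral $(f\star\phi)(t)=\frac{1}{h}\int_\R f(u)\sinc(h^{-1}(t-u))\du$ converges absolutely at every $t$ by Cauchy--Schwarz, one obtains
\[
  (\Rhf{RKF})(t) \;=\; f(t) - (f\star\phi)(t) \;=\; \frac{1}{\sqrt{2\pi}}\int_{\abs{v}\ge \pi/h}\wh f(v)\,e^{ivt}\dv.
\]
Taking absolute values and invoking Proposition~\ref{prop_dist_representation}\,a) with $q=1$ gives \eqref{RKFtwo3}.

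The main technical obstacle is reconciling the $L^2$-sense convolution theorem with the pointwise convolution integral, so that the low-pass projection $\frac{1}{\sqrt{2\pi}}\int_{\abs{v}<\pi/h}\wh f(v)e^{ivt}\dv$ is identified with $(f\star\phi)(t)$ as a genuine pointwise equality on all of $\R$. This is resolved by observing that $(f\star\phi)$ is continuous on $\R$ (being the convolution of two $L^2$ functions), while the low-pass projection is also continuous in $t$ by dominated convergence (its integrand is dominated by the $L^1$-function $\abs{\wh f}$); two continuous functions that agree almost everywhere must coincide everywhere.
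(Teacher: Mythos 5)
Your argument is correct: applying the classical reproducing kernel formula \eqref{RKF1} to the bandlimited part $g=f-\Rhf{WKS}\in B^2_{\pi/h}$ yields \eqref{RKFtwo1}--\eqref{RKFtwo2}, and the Fourier-side identification $(\Rhf{RKF})(t)=\frac{1}{\sqrt{2\pi}}\int_{\abs{v}\ge\pi/h}\wh f(v)e^{ivt}\dv$ together with Proposition~\ref{prop_dist_representation}\,a) gives \eqref{RKFtwo3}. The paper itself defers the proof to the cited reference \cite{Butzer-Schmeisser-Stens_2013}, but the form of \eqref{RKFtwo2} makes clear that this is exactly the intended ``quite simple'' argument; the only cosmetic simplification available is that the pointwise identity $(f\star\phi)(t)=\frac{1}{\sqrt{2\pi}}\int_{\abs{v}<\pi/h}\wh f(v)e^{ivt}\dv$ follows for every $t$ directly from the generalized Parseval formula for $L^2$ functions, so your continuity-plus-a.e.\ reconciliation, while valid, is not strictly needed.
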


For the quite simple proof see \cite{Butzer-Schmeisser-Stens_2013}. Estimate \eqref{RKFtwo3} is sharp in the same sense as explicated for \eqref{ASF3}. An extremal function is $\sinc(2h^{-1}t)$.

\begin{corollary}\label{RKF_Sob}
If $f\in W^{m,2}(\R)\cap C(\R)$ and $f^{(m)}\in \Mn$ for some $m\in\N$, then
\[
     |(\Rhf{RKF})(t)|=\Oh(h^{m})\qquad (h\to 0+).
\]
\end{corollary}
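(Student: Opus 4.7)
The plan is to combine the sharp bound \eqref{RKFtwo3} for the reproducing-kernel remainder with the distance estimate for functions whose derivative lies in the readapted modulation space. Specifically, \eqref{RKFtwo3} already reduces the problem to controlling $\dist_1(f, B^2_{\pi/h})$ as $h\to 0+$, so no further analysis of the integral representation of $\Rhf{RKF}$ is required.

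First I would verify the hypotheses of Proposition~\ref{dist_M_star}\,b) with $q=1$ and $r=m$: we are given that $f\in W^{m,2}(\R)\cap C(\R)$ and $f^{(m)}\in \Mn$, which matches exactly. That proposition then yields, in the case $s=0$ (i.e.\ the bound \eqref{dist_M_star1}),
\[
   \dist_1(f, B^2_\sigma) = \Oh\pbig{\sigma^{-m-1+1/q}} = \Oh\pbig{\sigma^{-m}} \qquad (\sigma\to\infty).
\]
Substituting $\sigma = \pi/h$ (legitimate since $\sigma\to\infty$ corresponds to $h\to 0+$), this gives
\[
   \dist_1\pbig{f, B^2_{\pi/h}} = \Oh(h^m) \qquad (h\to 0+).
\]

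Finally, plugging this into the pointwise bound \eqref{RKFtwo3},
\[
   \bigabs{(\Rhf{RKF})(t)} \le \frac{1}{\sqrt{2\pi}} \dist_1\pbig{f, B^2_{\pi/h}} = \Oh(h^m)\qquad (h\to 0+),
\]
uniformly in $t\in\R$, which is precisely the claim.

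There is no real obstacle here: the corollary is essentially a mechanical combination of the sharp remainder bound \eqref{RKFtwo3} and the distance estimate of Proposition~\ref{dist_M_star}\,b). The only point to mention explicitly is that one uses \eqref{RKFtwo3} rather than \eqref{ASF3}, so the proof parallels that of Corollary~\ref{ASF_Sob} verbatim, with the WKS remainder replaced by the RKF remainder and the same choice $q=1$, $\sigma=\pi/h$.
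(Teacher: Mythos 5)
Your proof is correct and follows exactly the route the paper intends: Corollary~\ref{RKF_Sob} is obtained by combining the bound \eqref{RKFtwo3} with Proposition~\ref{dist_M_star}\,b) for $q=1$, $r=m$, $s=0$, and the choice $\sigma=\pi/h$, precisely in parallel with Corollary~\ref{ASF_Sob}. No gaps; the only unstated (and routine) point is that the hypotheses place $f$ in $F^2\cap S_h^2$ so that the reproducing kernel formula and $\dist_1$ are applicable, which the paper likewise leaves implicit.
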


%------------------------------------------------------------------------------------------------------------------------------

\subsection{The general Parseval formula}
The well-known Parseval formula for bandlimited functions $f\in B_\sigma^2$ and $h\in (0,\pi/\sigma]$, stating that
\bgl{PDF1}
   \int_\R \abs{f(u)}^2\,du = h \sum_{k\in\Z} \abs{f(hk)}^2,
\egl
is known to have the following generalization:
\begin{trivlist}\item[]
\it Let $f, g \in B_\sigma^2.$ Then, for $h\in (0, \pi/\sigma)$, we have
\[
  \int_\R f(u) \ol{g(u)}\du = h \sum_{k\in\Z} f(hk) \ol{g(hk)}\,.
\]
\end{trivlist}
An extension beyond bandlimited functions was established in \cite[Theorem~1.1\,(b)]{Butzer-Gessinger_1997} and was there called \emph{general Parseval
decomposition formula}. It may be stated and supplemented as follows.

\begin{theorem}\label{GPDF}
a) Let $f\in F^2\cap S_h^1$ and $g\in F^2$. Then
\begin{equation}\label{GPDF0}
   \int_\R f(u) \ol{g(u)}\,du =  h \sum_{k\in\Z} f(hk) \ol{g(hk)} + \Rh(f,g)\,,
\end{equation}
where
\bgl{GPDF1}
    \Rh(f,g) :=  \int_\R (\Rhf{WKS})(u) \ol{g(u)}\du - h \sum_{k\in\Z} f(hk)\, \rez{\sqrt{2\pi}} \int_{\abs{v}\ge \pi/h} \wh{\ol{g}}(v)\,e^{ihkv}\dv\,.
\egl

\noindent
b) Let $f, g \in W^{1,2}(\R)\cap F^{1,2}$. Then for any $h>0$ and a constant $C$ independent of $h$,
\begin{align}
\bigabs{\Rh(f,g)} {}\le{}& C\,h \Big\{\dist_2(f', B_{\pi/h}^2)+\dist_2(g', B_{\pi/h}^2) \notag \\[1ex]
&\qquad\qquad\qquad  {} + h\dist_2(f', B_{\pi/h}^2)\dist_2(g', B_{\pi/h}^2)\Big\}.
\label{PDF_Sob0}
\end{align}
\end{theorem}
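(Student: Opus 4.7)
The plan is to bound $\Rh(f,g) = I_1 - I_2$ separately, where $I_1 := \int_\R (\Rhf{WKS})(u)\ol{g(u)}\du$ and $I_2 := h\sum_{k\in\Z}f(hk)\frac{1}{\sqrt{2\pi}}\int_{|v|\ge\pi/h}\wh{\ol g}(v)e^{ihkv}\dv$ are the two summands in \eqref{GPDF1}. A key ingredient used throughout is the elementary conversion
\begin{equation*}
  \dist_2(f,B^2_{\pi/h}) \le \frac{h}{\pi}\dist_2(f',B^2_{\pi/h}),
\end{equation*}
obtained by pulling $|v|^{-1}\le h/\pi$ out of the integral in Proposition~\ref{prop_dist_representation}\,b), and the analogous inequality for $g$. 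These translate the function distances into the derivative distances that appear on the right of \eqref{PDF_Sob0}.

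For $I_2$, Fourier inversion---justified since $g\in F^{1,2}$ forces $\wh g\in L^1(\R)$---identifies the inner integral as $\ol{(g-P_{\pi/h}g)(hk)}$, where $P_{\pi/h}$ denotes the orthogonal projection of $L^2(\R)$ onto $B^2_{\pi/h}$. Cauchy--Schwarz in $\ell^2(\Z)$ then yields
\begin{equation*}
  |I_2| \le \Big\{h\sum_{k\in\Z}|f(hk)|^2\Big\}^{1/2}\Big\{h\sum_{k\in\Z}|(g-P_{\pi/h}g)(hk)|^2\Big\}^{1/2}.
\end{equation*}
Proposition~\ref{propw2} (Nikol'ski\u{\i}) controls each factor; using $(g-P_{\pi/h}g)'=g'-P_{\pi/h}g'$, $\|g-P_{\pi/h}g\|_{L^2(\R)}=\dist_2(g,B^2_{\pi/h})$, and the conversion above, the second factor is at most $C_1 h\dist_2(g',B^2_{\pi/h})$, while the first is a constant depending only on $\|f\|_{L^2(\R)}$ and $\|f'\|_{L^2(\R)}$. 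Hence $|I_2|\le Ch\,\dist_2(g',B^2_{\pi/h})$.

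For $I_1$, Parseval's formula converts the integral into $\int_\R \wh{\Rhf{WKS}}(v)\,\ol{\wh g(v)}\,dv$, and the crucial technical step is the \emph{aliasing representation} of $\wh{\Rhf{WKS}}$: extracting from \eqref{ASF2} the substitution $w=v+2\pi k/h$ in each summand and collecting terms gives
\begin{equation*}
  \wh{\Rhf{WKS}}(v)=
  \begin{cases} \wh f(v), & |v|\ge\pi/h,\\[0.5ex] -\sum_{k\ne 0}\wh f(v+2\pi k/h), & |v|<\pi/h.
  \end{cases}
\end{equation*}
This splits $I_1=J_1-J_2$. The direct piece $J_1:=\int_{|v|\ge\pi/h}\wh f(v)\ol{\wh g(v)}\dv$ is handled by Cauchy--Schwarz and the conversion lemma: $|J_1|\le (h/\pi)^2\dist_2(f',B^2_{\pi/h})\dist_2(g',B^2_{\pi/h})$, producing the interaction term in \eqref{PDF_Sob0}.

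For the aliasing piece $J_2 := \int_{-\pi/h}^{\pi/h}\ol{\wh g(v)}\sum_{k\ne 0}\wh f(v+2\pi k/h)\dv$, Cauchy--Schwarz in $L^2(-\pi/h,\pi/h)$ peels off $\|\wh g\|_{L^2(-\pi/h,\pi/h)}\le\|g\|_{L^2(\R)}$. The $L^2$-norm of the alias sum is then controlled by a \emph{weighted} Cauchy--Schwarz with weights $|v+2\pi k/h|^{-1}$: for $|v|\le\pi/h$ and $k\ne 0$ one has $|v+2\pi k/h|\ge\pi(2|k|-1)/h$, so $\sum_{k\ne 0}|v+2\pi k/h|^{-2}\le h^2/4$ (using $\sum_{k=1}^\infty (2k-1)^{-2}=\pi^2/8$). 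Term-by-term integration of the remaining factor, after substituting $w=v+2\pi k/h$, reassembles exactly $\int_{|w|\ge\pi/h}|w\wh f(w)|^2\,dw=\dist_2(f',B^2_{\pi/h})^2$, yielding $|J_2|\le (h/2)\|g\|_{L^2(\R)}\dist_2(f',B^2_{\pi/h})$. Adding the three bounds produces \eqref{PDF_Sob0}. The main obstacle is the aliasing analysis for $J_2$: deriving the representation of $\wh{\Rhf{WKS}}$ cleanly and then combining the weighted Cauchy--Schwarz with term-by-term integration to convert the unweighted $L^2$-norm of the alias sum back into $\dist_2(f',B^2_{\pi/h})$---the role of the hypothesis $f\in W^{1,2}\cap F^{1,2}$ is precisely to make $v\wh f(v)$ square-integrable so that this conversion succeeds.
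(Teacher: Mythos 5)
The paper itself gives no proof of this theorem --- it defers to the reference \cite{Butzer-Schmeisser-Stens_2013} --- so your argument can only be judged on its own terms. Your strategy for part b) is sound and is the natural one: Parseval plus the aliasing representation of the Fourier transform of $\Rhf{WKS}$, a weighted Cauchy--Schwarz over the translated bands (the constant $h^2/4$ for $\sum_{k\ne 0}\abs{v+2\pi k/h}^{-2}$ is correct), and the conversion $\dist_2(f,B^2_{\pi/h})\le (h/\pi)\dist_2(f',B^2_{\pi/h})$. The $I_1=J_1-J_2$ analysis checks out and produces the first and third terms of \eqref{PDF_Sob0}.

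There are, however, two genuine gaps. First, you do not prove part a) at all: \eqref{GPDF0} with the \emph{specific} remainder \eqref{GPDF1} is a claim, not a definition. One must insert the approximate sampling formula \eqref{ASF1} into $\int_\R f\,\ol{g}$, evaluate $\int_\R \sinc(h^{-1}u-k)\,\ol{g(u)}\du=\tfrac{h}{\sqrt{2\pi}}\int_{\abs{v}<\pi/h}\wh{\ol g}(v)e^{ihkv}\dv$ by Parseval, and use Fourier inversion ($g\in F^2$) to split off $h\,\ol{g(hk)}$; the hypothesis $f\in S_h^1$ is exactly what legitimizes the interchange of sum and integral there. Second, in your $I_2$ estimate the assertion that $\brbig{h\sum_k\abs{f(hk)}^2}^{1/2}$ is ``a constant depending only on $\norm{f}_{\Ltwo}$ and $\norm{f'}_{\Ltwo}$'' is false: Proposition~\ref{propw2} gives $\norm{f}_{\Ltwo}+h\norm{f'}_{\Ltwo}$, which is unbounded as $h\to\infty$, while \eqref{PDF_Sob0} is claimed for \emph{all} $h>0$ with $C$ independent of $h$. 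The stray term $h^2\norm{f'}_{\Ltwo}\dist_2(g',B^2_{\pi/h})$ is not of the allowed form. The repair is the splitting $h^2\norm{f'}_{\Ltwo}^2= h^2\int_{\abs{v}<\pi/h}\abs{v\wh f(v)}^2\dv+h^2\int_{\abs{v}\ge\pi/h}\abs{v\wh f(v)}^2\dv\le \pi^2\norm{f}_{\Ltwo}^2+h^2\dist_2(f',B^2_{\pi/h})^2$, which converts the offending contribution into the admissible pieces $C h\,\dist_2(g',B^2_{\pi/h})$ and $C h^2\dist_2(f',B^2_{\pi/h})\dist_2(g',B^2_{\pi/h})$. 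With these two additions your proof is complete.
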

For the proof, which depends on several preliminary estimates and is quite extensive, the reader is referred to \cite{Butzer-Schmeisser-Stens_2013}.

Concerning the orders for $h\to 0+$ in \eqref{PDF_Sob0}, one has as a result of the theory developed in this paper:

\begin{corollary}\label{cor_PDF}
Let $f\in W^{m,2}(\R)\cap F^{1,2}$ for some $m\in\N$.\smallskip

\noindent a) If $f^{(m)}, g^{(m)}\in \Mn$, then
\[
     \bigabs{(\Rhf{PDF})(t)}= \Oh(h^{m+1/2})\qquad(h \to 0+).
\]

\noindent
b) If $f^{(m)}, g^{(m)}\in \Rieszalpha$, then for $\alpha >0$,
\[
     \bigabs{(\Rhf{PDF})(t)}= \oh(h^{m+\alpha})\qquad(h\to 0+).
\]
\end{corollary}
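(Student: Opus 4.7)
The plan is to reduce both parts to the quantitative estimate \eqref{PDF_Sob0} from Theorem~\ref{GPDF}\,b), which bounds $|\Rh(f,g)|$ in terms of $\dist_2(f',B_{\pi/h}^2)$ and $\dist_2(g',B_{\pi/h}^2)$. Once the necessary rates for these two distances are in hand, parts a) and b) both follow by direct substitution together with a brief check that the bilinear cross term in \eqref{PDF_Sob0} is of strictly higher order in $h$ than the two linear ones.

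For part a), since $f^{(m)},g^{(m)}\in\Mn$, Proposition~\ref{dist_M_star}\,b) applied with $r=m$, $s=1$, $q=2$ (so that the exceptional case $s=r=q=1$ never arises) yields
\[
\dist_2(f',B_{\pi/h}^2) = \Oh(h^{m-1/2}), \qquad \dist_2(g',B_{\pi/h}^2) = \Oh(h^{m-1/2})\qquad(h\to 0+).
\]
The two linear terms in \eqref{PDF_Sob0} then contribute $h\cdot \Oh(h^{m-1/2})=\Oh(h^{m+1/2})$, while the bilinear term contributes $h^2\cdot \Oh(h^{2m-1})=\Oh(h^{2m+1})$, which is of strictly higher order for every $m\in\N$. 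This gives $|\Rh(f,g)|=\Oh(h^{m+1/2})$.

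For part b) I would not apply Proposition~\ref{prop_Riesz_and_oh} to $f$ directly but to $f'$ with exponent $m-1$: since $(f')^{(m-1)}=f^{(m)}\in\Rieszalpha$ and $m-1\ge 0$, that proposition gives
\[
\int_{\abs{v}\ge \sigma}\bigabs{\wh{f'}(v)}^2 dv = \oh\pbig{\sigma^{-2\alpha-2(m-1)}}\qquad(\sigma\to\infty),
\]
and by Proposition~\ref{prop_dist_representation}\,b) (with $k=1$) the left-hand side equals $\bkbig{\dist_2(f',B_\sigma^2)}^2$. Hence $\dist_2(f',B_{\pi/h}^2)=\oh(h^{\alpha+m-1})$, and analogously for $g$. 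Inserting this into \eqref{PDF_Sob0}, the linear contributions give $h\cdot\oh(h^{\alpha+m-1})=\oh(h^{\alpha+m})$, while the bilinear one produces $h^2\cdot\oh(h^{2\alpha+2m-2})=\oh(h^{2\alpha+2m})$, which is of strictly higher order. This yields $|\Rh(f,g)|=\oh(h^{m+\alpha})$.

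Nothing in the argument is truly delicate; the only point worth flagging is that in part b) one must apply Proposition~\ref{prop_Riesz_and_oh} at the derivative index $m-1$ to the function $f'$, rather than at index $m$ to $f$, in order to reproduce the correct combination $\alpha+m$ in the final exponent. The estimate \eqref{PDF_Sob0} is doing all the real work; once it is available, the proof is genuinely a short bookkeeping exercise.
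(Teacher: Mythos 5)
Your argument is correct and follows essentially the route the paper intends: part a) via Proposition~\ref{dist_M_star}\,b) with $q=2$, $s=1$, $r=m$ inserted into \eqref{PDF_Sob0}, and part b) via Proposition~\ref{prop_Riesz_and_oh} applied to $f'$ at index $m-1$ (exactly the shift used in the paper's proof of Corollary~\ref{bernst_Sob}\,b)), with the observation that the bilinear term in \eqref{PDF_Sob0} is of higher order. Nothing is missing.
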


The proof is again based on Proposition~\ref{dist_M_star}\,b) with $q=2$ and $s=1$.

Although we know that the estimates of Proposition~\ref{dist_M_star}\,b) for the distance functionals occurring on the left-hand side of \eqref{PDF_Sob0} are best possible, it is still an open question whether the order for $\Rhf{PDF}$ itself in Corollary~\ref{cor_PDF}\,a) is best possible or if it can be improved to $\oh(h^{m+1/2})$. This is due to the inequality in \eqref{PDF_Sob0}. A counterexample with functions $f^{(m)}, g^{(m)} \in \Mn$ with equality in \eqref{PDF_Sob0} would answer this question.

The reader may ask why we do not have estimates for functions belonging to fractional Sobolev spaces in Corollaries~\ref{ASF_Sob} and \ref{RKF_Sob}. This is due to the fact that the remainder terms for the Whittaker-Kotel’nikov-Shannon sampling theorem in \eqref{ASF3} and for the reproducing kernel theorem in \eqref{RKFtwo3} are estimated by the distance functional $\dist_1$ with respect to $\Lone$-norm, whereas Corollaries~\ref{bernst_Sob}, \ref{Nikol_Sob} and \ref{cor_PDF} are based on the estimate of the distance functional $\dist_2$ with respect to $\Ltwo$-norm in Proposition~\ref{prop_Riesz_and_oh}, rewritten in the form \eqref{eq_H_implies_dist}. Such an estimate is not available for $\dist_1$, since we have restricted the theory of Riesz derivatives to $\Ltwo$. For Riesz derivatives in $L^p(\R)$, $1\le p\le 2$, see \eg \cite[Chapter~11]{Butzer-Nessel_1971}.

The applications considered in this section, such as the sampling theorem, Bernstein's inequality, the reproducing kernel formula or Parseval's formula, all in the case of non-bandlimited functions, cannot be treated in the frame of the classical  $M^{2,1}$  space. However, as seen above, they can be carried out in $\Mn$.

The treatment of these applications in the more abstract setting of reproducing kernel Hilbert spaces does not seem to be possible, but reproducing kernel theory in the setting of Banach spaces, developed only in the last 20  years, could surly  be a suitable tool (see the overview in \cite[Section~6]{Butzer_et_al_2014} and the literature cited there).

We are of the firm opinion that in a generalization of the present $L^2(\R)$ theory to a Banach reproducing kernel setting, the arising adapted modulation space will render possibly  not only more theoretical but especially more practical applications, such as those of the present paper, but in a general Banach space setting. The various inclusion relations between Lipschitz and (fractional) Sobolev spaces should be of general interest in functional analysis.

%---------------------------------------------------------------------------------------------------------------------------------------------

\section{Counterexamples concerning the inclusions between various spaces}\label{sec_counterexamples}

In order to show that certain inclusions of spaces are strict, we shall employ counter-examples of the form
\[
 f(t) = \sum_{n=1}^\infty a_n\,\sinc^2(b_nt)\,e^{ic_nt}.
\]
By choosing the sequences $(a_n)_{n\in\N}$, $(b_n)_{n\in\N}$ and $(c_n)_{n\in\N}$ appropriately, we can design functions with very particular properties.

For calculating the Fourier transform $\widehat{f}$ and its norms, we shall use the following  lemma which is easily verified;
cf.~\cite[pp.~515--516, Table~1]{Butzer-Nessel_1971}.

\begin{lemma}\label{lemma-example}
For real numbers $b>0$ and $c$, let $\varphi(t):=\sinc^2(bt) e^{ict}$. Then, defining $I:=[c-2\pi b, c+2\pi b]$, we have:
\begin{align*}
\widehat{\varphi}(v)= &
\begin{cases}
\ds\rez{\sqrt{2\pi}\, b} \pbigg{1- \frac{\abs{v-c}}{2\pi b}}, & v\in I,\\[1.5ex]
\ds 0, & v\in\R\setminus I;
\end{cases}
\end{align*}
\begin{equation}\label{eq_phi_norms}
  \bignorm{\mspace{1mu}\widehat{\varphi}\mspace{1mu}}_{L^1(\R)} = \bignorm{\mspace{1mu}\widehat{\varphi}\mspace{1mu}}_{L^1(I)} = \sqrt{2\pi};\qquad
   \bignorm{\mspace{1mu}\widehat{\varphi}\mspace{1mu}}_{L^2(\R)} = \bignorm{\mspace{1mu}\widehat{\varphi}\mspace{1mu}}_{L^2(I)} = \sqrt{\frac{2}{3b}}\,.
\end{equation}
\end{lemma}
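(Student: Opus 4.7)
The plan is to build up $\widehat{\varphi}$ from the known Fourier transform of $\sinc$ via the standard operational rules---convolution, scaling, and modulation---and then to compute the two norms by direct integration of the resulting triangular function.

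First I would recall from the paper that
\[
  \widehat{\sinc}(v) = \frac{1}{\sqrt{2\pi}}\,\rect(v/(2\pi)) \quad \text{(a.\,e.),}
\]
where $\rect$ denotes the characteristic function of $[-1/2,1/2]$. Since $\sinc^2(t) = \sinc(t)\cdot\sinc(t)$, the convolution theorem (with the paper's normalization, $\widehat{fg} = (2\pi)^{-1/2}\,\widehat{f}*\widehat{g}$) gives
\[
  \widehat{\sinc^2}(v) = \frac{1}{\sqrt{2\pi}}\,\bigl(\widehat{\sinc}*\widehat{\sinc}\bigr)(v).
\]
The convolution of two rectangles of width $2\pi$ and height $1/\sqrt{2\pi}$ is a triangle supported on $[-2\pi,2\pi]$ of height $1/\sqrt{2\pi}$, so
\[
  \widehat{\sinc^2}(v) = \frac{1}{\sqrt{2\pi}}\left(1 - \frac{|v|}{2\pi}\right)_+ .
\]

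Next I would apply the two operational rules: for $b>0$, the scaling rule yields $\widehat{\sinc^2(b\cdot)}(v) = b^{-1}\widehat{\sinc^2}(v/b)$, which already matches the asserted formula centered at $0$ with support $[-2\pi b, 2\pi b]$ and peak value $(\sqrt{2\pi}\,b)^{-1}$. The modulation rule, $\widehat{e^{ic\cdot}f(\cdot)}(v) = \widehat{f}(v-c)$, then produces the centering at $c$, giving exactly the stated expression for $\widehat{\varphi}$ with support $I = [c-2\pi b,\,c+2\pi b]$.

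Finally, I would compute the two norms directly from this triangular profile. Since $\widehat{\varphi}$ vanishes outside $I$, substituting $u = (v-c)/(2\pi b)$ turns both integrals over $I$ into integrals over $[-1,1]$:
\[
  \|\widehat{\varphi}\|_{L^1(\R)} = \frac{1}{\sqrt{2\pi}\,b}\int_{I}\left(1 - \frac{|v-c|}{2\pi b}\right) dv = \frac{2\pi b}{\sqrt{2\pi}\,b}\int_{-1}^{1}(1-|u|)\,du = \sqrt{2\pi},
\]
and analogously
\[
  \|\widehat{\varphi}\|_{L^2(\R)}^{2} = \frac{1}{2\pi b^{2}}\cdot 2\pi b\int_{-1}^{1}(1-|u|)^{2}du = \frac{1}{b}\cdot\frac{2}{3} = \frac{2}{3b}.
\]
This is routine bookkeeping; there is no conceptual obstacle, only the need to keep track of the $(2\pi)^{\pm 1/2}$ factors dictated by the paper's Fourier normalization, and to treat the degenerate case $b\to 0$ (which is excluded by hypothesis).
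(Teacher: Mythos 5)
Your proof is correct and supplies essentially the computation the paper itself omits (it merely cites a transform table in Butzer--Nessel): the convolution/scaling/modulation route and the final norm integrals all check out under the paper's $(2\pi)^{-1/2}$ normalization. One small wording slip: the plain convolution $\wh{\sinc}\ast\wh{\sinc}$ is a triangle of height $1$, not $1/\sqrt{2\pi}$ --- the height $1/\sqrt{2\pi}$ arises only after multiplying by the factor $(2\pi)^{-1/2}$ from the convolution theorem --- but your displayed formula for $\widehat{\sinc^2}$ and everything downstream of it is nevertheless correct.
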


\begin{proposition}\label{prop_counter_allgemein}
For $\gamma>1$ and $\delta\ge 0$ let the function $f_{\gamma,\delta}$ be given by
\begin{equation}\label{eq_f_gamma_delta}
   f_{\gamma,\delta}(t) := \sum_{n=2}^\infty \rez{n^{\gamma}\,\ln^\delta n} \sinc^2\left(\frac{t}{2\pi n}\right) e^{i(n+1/2)t}.
\end{equation}
There holds
\begin{aufzaehlung}{$(iii)$}
\item $\ds f_{\alpha+1,1/2}\in \pbig{\lip_r(\alpha)\setminus \Rieszalpha}\cap C(\R)
                                                   %\subset \lip_r(\alpha)\setminus \Riesz(\alpha)
                                                   \quad (\alpha>0)$;

\item $\ds f_{\alpha+1,0}\in \pbig{\Lip_r(\alpha)\setminus \lip_r(\alpha)}\cap C(\R)
                                                  %\subset \Lip_r(\alpha)\setminus \lip_r(\alpha)
                                                  \quad (\alpha>0)$;

\item $\ds f_{\beta+1,0}\in \pbig{\Lip_r(\beta)\setminus \Lip_r(\alpha)}\cap C(\R)
                                         %\subset \Lip_r(\beta)\setminus \Lip_r(\alpha)
                                         \quad (\alpha>\beta>0)$;

\item $\ds f_{3/2,1} \in \pbig{F^2\cap S_h^2}\setminus M^{2,1}\quad (h>0)$.
\end{aufzaehlung}
\end{proposition}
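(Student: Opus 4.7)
The plan is to exploit the crucial geometric fact that, after applying Lemma~\ref{lemma-example} to each summand $\varphi_n(t):=\sinc^2(t/(2\pi n))\,e^{i(n+1/2)t}$, the Fourier transforms $\wh{\varphi}_n$ are supported on the intervals $I_n:=[n+\tfrac12-\tfrac1n,\;n+\tfrac12+\tfrac1n]$, which are pairwise disjoint for $n\ge 2$ and satisfy $I_n\subset(n,n+1)$ for $n\ge 3$. Setting $a_n:=1/(n^\gamma\ln^\delta n)$, this disjointness reduces every relevant integral to an explicit series. Using \eqref{eq_phi_norms} and the fact that $|v|\asymp n$ on $I_n$, for any $\mu\ge 0$ and any $\sigma$ lying between $I_N$ and $I_{N+1}$ one obtains
\[
  \int_{\abs{v}\ge\sigma}\abs{v}^{2\mu}\bigabs{\wh f_{\gamma,\delta}(v)}^2\dv
  \;\asymp\; \sum_{n>N} a_n^2\, n^{2\mu}\,\bignorm{\wh\varphi_n}_{\Ltwo}^2
  \;\asymp\; \sum_{n>N}\frac{1}{n^{2(\gamma-\mu)-1}\ln^{2\delta}n},
\]
which by elementary comparison behaves like $\sigma^{-2(\gamma-\mu-1)}/\ln^{2\delta}\sigma$ when $\gamma-\mu>1$ and diverges otherwise.

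Parts (i)--(iii) now follow by reading the above asymptotics through Theorems~\ref{thm_equiv_new} and \ref{thm_oh}. For (i), where $\gamma=\alpha+1$ and $\delta=\tfrac12$, the case $\mu=0$ yields a tail of order $\sigma^{-2\alpha}/\ln\sigma=\oh(\sigma^{-2\alpha})$, so $f_{\alpha+1,1/2}\in\lip_r(\alpha)$ by Theorem~\ref{thm_oh}, while the case $\mu=\alpha$ reduces to $\sum 1/(n\ln n)=\infty$, so $\abs{v}^\alpha\wh f\notin\Ltwo$, i.e.\ $f_{\alpha+1,1/2}\notin\Rieszalpha$. For (ii), with $\delta=0$, the $\mu=0$ tail is exactly of order $\sigma^{-2\alpha}$, hence in $\Lip_r(\alpha)$ but not in $\lip_r(\alpha)$ by the same two theorems. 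For (iii), with exponent $\beta+1$ in place of $\alpha+1$, the same computation puts $f_{\beta+1,0}$ in $\Lip_r(\beta)$ but excludes it from $\Lip_r(\alpha)$ for any $\alpha>\beta$, again by Theorem~\ref{thm_equiv_new}\,(i)$\,\Leftrightarrow\,$(ii). Continuity in (i)--(iii) is automatic since $\wh f\in\Lone$ (the series $\sum a_n\bignorm{\wh\varphi_n}_{\Lone}=\sqrt{2\pi}\sum a_n$ converges for every $\gamma>1$).

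For part (iv), with $\gamma=\tfrac32$ and $\delta=1$, the disjointness $I_n\subset(n,n+1)$ makes the $\Mtwo$-norm telescope:
\[
  \bignorm{f_{3/2,1}}_{\Mtwo}
  \;\asymp\; \sum_{n\ge 2} a_n\bignorm{\wh\varphi_n}_{\Ltwo}
  \;\asymp\; \sum_{n\ge 2}\frac{1}{n\ln n}\;=\;\infty,
\]
so $f_{3/2,1}\notin\Mtwo$. On the other hand, $\sum a_n<\infty$ gives $\wh f\in\Lone$, and $\sum a_n^2 n<\infty$ gives $f\in\Ltwo$, so $f\in F^2$.

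The only genuinely delicate piece is verifying $f_{3/2,1}\in S_h^2$. The plan is to split the sum for $f(hk)$ at the threshold $n_0\asymp\abs{k}h/(2\pi)$: for $n<n_0$, use $\sinc^2(hk/(2\pi n))\le(2\pi n/(hk))^2$ to bound the tail by $C h^{-2} k^{-2}\sum_{n<n_0} n^{1/2}/\ln n\asymp 1/(\abs k^{1/2}\ln\abs k)$; for $n\ge n_0$, use $\sinc^2\le 1$ to bound $\sum_{n\ge n_0} a_n\asymp 1/(\abs k^{1/2}\ln\abs k)$. Combining these yields $\abs{f(hk)}^2\lesssim 1/(\abs k\ln^2\abs k)$, whose summability over $k\in\Z$ gives $f_{3/2,1}\in S_h^2$. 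This pointwise decay estimate is the main obstacle, as the other three parts are essentially bookkeeping on top of the spectral setup; once it is in hand, (iv) is complete.
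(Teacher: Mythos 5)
Your proposal is correct and follows essentially the same route as the paper: disjoint spectral supports $I_n$ via Lemma~\ref{lemma-example}, explicit two-sided tail estimates for $\int_{|v|\ge\sigma}|v|^{2\mu}|\wh f|^2\,dv$ read through Theorems~\ref{thm_equiv_new} and \ref{thm_oh}, divergence of $\sum 1/(n\ln n)$ for the negative assertions in (i) and (iv), and a split-sum bound on $|f(hk)|$ of order $|k|^{-1/2}(\ln|k|)^{-1}$ for the $S_h^2$ membership. The only cosmetic differences are that you organize the tail estimates with a general weight $|v|^{2\mu}$ where the paper treats $\mu=0$ and $\mu=\alpha$ separately, and you split the sampling sum at $n_0\asymp|k|h/(2\pi)$ rather than at $n=|k|$; both yield the same bounds.
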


\begin{proof}
First we note that $f_{\gamma,\delta}\in C(\R)$, in view of the uniform convergence of the series \eqref{eq_f_gamma_delta}. Next we show that the series converges also in $\Ltwo$-norm. Defining
\[
      \varphi_n(t) :=\sinc^2\left(\frac{t}{2\pi n}\right) e^{i(n+1/2)t},
\]
it follows by Lemma~\ref{lemma-example} that the support of $\widehat{\varphi}_n$ is the interval $I_n:= [n+\rez{2}-\rez{n}, n+\rez{2}+\rez{n}]$, and $I_{n_1}\cap I_{n_2} = \emptyset$ when $n_1$ and $n_2$ are any two different integers greater than $1$. This yields, noting the right-hand equation in \eqref{eq_phi_norms},
\[
  \int_\R \varphi_m(t) \overline{\varphi_n(t)} \dt = \int_\R \wh\varphi_m(t) \overline{\wh\varphi_n(t)} \dt=
  \begin{cases}
    \dfrac{4\pi n}{3}, & m=n\\[1.5ex]
    0, & m\neq n
  \end{cases}\qquad (m,n\ge 2).
\]
Hence the sequence $\pbig{\sqrt{3/4\pi n}\,\varphi_n}_{n\ge 2}$ is an othonormal sequence in the Hilbert space $\Ltwo$, and by the Riesz-Fischer theorem (cf.~\cite[p.~86]{Taylor-Lay_1980}), a series $\sum_{n=2}^\infty a_n \sqrt{3/4\pi n}\,\varphi_n$ converges in $\Ltwo$ if and only if $\sum_{n=2}^\infty\abs{a_n}^2<\infty$. This implies that the series \eqref{eq_f_gamma_delta} converges in $\Ltwo$, and, in particular, $f_{\gamma,\delta}\in \Ltwo$.

In view of the continuity of the Fourier transform we now have
\[
    \widehat{f_{\gamma,\delta}}(v) = \sum_{n=2}^\infty \rez{n^{\gamma}\,\ln^\delta n}\, \widehat{\varphi}_n(v),
\]
and Lemma~\ref{lemma-example} easily yields
\begin{align}
\bignorm{\mspace{1mu}\widehat{f_{\gamma,\delta}}\mspace{1mu}}_{L^1(\R)} \le {} & \sqrt{2\pi} \,\sum_{n=2}^\infty \rez{n^{\gamma}\,\ln^\delta n} < \infty
\label{eq_f_gamma_1_norm}
%\\[2ex]
%%
%%\bignorm{\mspace{1mu}\widehat{f_{\gamma,\delta}}\mspace{1mu}}_{L^2(\R)} = {} &
%%                       2\, \sqrt{\frac{\pi}{3}}\brbigg{\sum_{n=2}^\infty \rez{n^{2\gamma-1} \,\ln^{2\delta} n}}^{1/2} < \infty\label{eq_f_gamma_2_norm}
%
\end{align}
and
\begin{equation}\label{eq_f_gamma_modulation_norm_part}
  \bignorm{\mspace{1mu}\widehat{f_{\gamma,\delta}}\mspace{1mu}}_{L^2[n, n+1]}^2 =
  \begin{cases}\ds \frac{4\pi}{3}  \rez{n^{2\gamma-1}\,\ln^{2\delta} n}, & n\ge 2\\[2ex]
                0, & n\le 1
  \end{cases} \qquad (n\in\Z).
\end{equation}

For $\sigma \ge 3$ and $m\in\N$ with $m\le \sigma < m+1$ we have
\begin{align*}
  & \int_{\abs{v}\ge \sigma}^\infty \bigabs{\widehat{f_{\gamma,\delta}}(v)}^2dv
      \le \sum_{n=m}^\infty  \bignorm{\mspace{1mu}\widehat{f_{\gamma,\delta}}\mspace{1mu}}_{L^2[n, n+1]}^2
       = \frac{4\pi}{3} \sum_{n=m}^\infty \rez{n^{2\gamma-1}\,\ln^{2\delta} n} \\[2ex]
  {}\le{} & \frac{4\pi}{3\ln^{2\delta}m}\int_{m-1}^\infty\frac{du}{u^{2\gamma-1}}=\frac{4\pi(m-1)^{-2\gamma+2}}{3(2\gamma-2)\ln^{2\delta}m}%\\[2ex]
  {}\le{} \frac{4\pi(\sigma-2)^{-2\gamma+2}}{3(2\gamma-2)\ln^{2\delta}(\sigma-1)}.
\end{align*}
This shows that
\begin{equation}\label{eq_counterex_Lipschitz}
   \int_{\abs{v}\ge \sigma}^\infty \bigabs{\widehat{f_{\gamma,\delta}}(v)}^2dv
   =\Oh\pbig{{\sigma^{-2\gamma+2}\ln^{-2\delta}\sigma}}\qquad (\sigma\to\infty).
\end{equation}

An estimate of this integral from below will be needed for $\delta = 0$ only. If $\sigma$ and $m$ are as above, then
\begin{align}
   \int_{\abs{v}\ge \sigma}^\infty \bigabs{\widehat{f_{\gamma,0}}(v)}^2dv
      \ge &\sum_{n=m+1}^\infty  \bignorm{\mspace{1mu}\widehat{f_{\gamma,0}}\mspace{1mu}}_{L^2[n, n+1]}^2
       = \frac{4\pi}{3} \sum_{n=m+1}^\infty \rez{n^{2\gamma-1}} \notag\\[-2ex]
       \phantom{f}\label{eq_counterex_Lipschitz_below}\\[0ex]
  {}\ge{} & \frac{4\pi}{3}\int_{m+2}^\infty\frac{du}{u^{2\gamma-1}}=\frac{4\pi(m+2)^{-2\gamma+2}}{3(2\gamma-2)}
  {}\ge{} \frac{4\pi(\sigma+2)^{-2\gamma+2}}{3(2\gamma-2)}.\notag
\end{align}

Now to the proof of assertions (i)--(iv).

\noindent(i): It follows from Theorem~\ref{thm_oh} and \eqref{eq_counterex_Lipschitz} that $f_{\alpha+1,1/2}\in \lip_r(\alpha)$. On the other hand, with the interval $I_n$ as defined above, we have
\begin{align*}
 \int_{-\infty}^\infty \abs{v}^{2\alpha}\bigabs{\widehat{f_{\alpha+1,1/2}}(v)}^2dv
  {}={} &\sum_{n=2}^\infty\int_{I_n} \abs{v}^{2\alpha}\bigabs{\widehat{f_{\alpha+1,1/2}}(v)}^2dv \\[2ex]
  {}\ge{}&  \sum_{n=2}^\infty n^{2\alpha} \bignorm{\widehat{f_{\alpha+1,1/2}}}_{L^2[n, n+1]}^2
  =  \frac{4\pi}{3}\sum_{n=2}^\infty \frac{1}{n \ln n} = \infty,
\end{align*}
which shows that $f_{\alpha+1,1/2}\notin \Rieszalpha$.

\smallskip\noindent
(ii), (iii): This follows immediately from the estimates \eqref{eq_counterex_Lipschitz} and \eqref{eq_counterex_Lipschitz_below}.

\smallskip\noindent
(iv): Equations \eqref{eq_f_gamma_1_norm} and \eqref{eq_f_gamma_modulation_norm_part} show that $f_{3/2,1} \in F^2\setminus M^{2,1}$.
Next, let $h>0$ and let $k$ be any integer such that $\abs{k}>80$. Then
\[
    \bigabs{f(hk)} \le \sum_{n=2}^\infty \rez{n^{3/2}\,\ln n} \sinc^2\pbigg{\frac{h\abs{k}}{2\pi n}},
\]
and with the estimate% \eqref{eq_sinc_estimate}
%Employing the estimate
%%
\[
   \sinc^2(t) \le \min\brBig{1, \rez{\pi^2 t^2} }\quad (t\in \R\setminus \{0\}),
\]
we find that
%
%\begin{align*}
%%
%\bigabs{f(hk)} \le{} & \frac{4}{h^2 k^2}\, \sum_{n=2}^{\abs{k}}\frac{n^{1/2}}{\ln n} +
%                                                        \sum_{n= \abs{k}+1}^\infty \rez{n^{3/2}\,\ln n}\\[2ex]
%%
%{}\le {} & \frac{4}{h^2 \abs{k}}\, \max_{2\le n\le\abs{k}} \frac{n^{1/2}}{\ln n} + \rez{\ln \abs{k}} \int_{\abs{k}}^\infty \frac{dx}{x^{3/2}}\\[2ex]
%%
%{} = {}  & \frac{4}{h^2 \abs{k}^{1/2}\,\ln \abs{k}}\,+\, \frac{2}{\abs{k}^{1/2}\,\ln \abs{k}}\\[2ex]
%%
%{} = {} & 2\pBig{1 +\frac{2}{h^2}} \rez{\abs{k}^{1/2}\,\ln\abs{k}},
%%
%\end{align*}
%
\begin{align*}
 \bigabs{f(hk)} \le{} & \frac{4}{h^2 k^2}\, \sum_{n=2}^{\abs{k}}\frac{n^{1/2}}{\ln n} +
                                                        \sum_{n= \abs{k}+1}^\infty \rez{n^{3/2}\,\ln n}\\[2ex]
{}\le {} & \frac{4}{h^2 \abs{k}}\, \max_{2\le n\le\abs{k}} \frac{n^{1/2}}{\ln n} + \rez{\ln \abs{k}} \int_{\abs{k}}^\infty \frac{dx}{x^{3/2}}\\[2ex]
{} = {} &  \frac{4}{h^2 \abs{k}^{1/2}\,\ln \abs{k}}\,+\, \frac{2}{\abs{k}^{1/2}\,\ln \abs{k}}%\\[2ex]
{} = {}  2\pBig{1 +\frac{2}{h^2}} \rez{\abs{k}^{1/2}\,\ln\abs{k}},
\end{align*}
where we have used that the maximum in the second inequality is attained at $n=\abs{k}$ when $\abs{k}>80$, as can be verified by elementary calculus. Hence $f\in S_h^2$.
\end{proof}

\begin{proposition}\label{prop_counterex2}
Let
\[
   f(t) := \sinc^2\left(\frac{t}{4\pi}\right) \sum_{n=1}^\infty \frac{e^{i(2^n-1/2)t}}{n^2}.
\]
Then the following holds:
\begin{aufzaehlung}{$(iii)$\hfil}
\item  $\ds f\in M^{2,1}\setminus \Mn$;
\item $\ds \dist_2(f, B_\sigma^2)\,\ge\, \frac{2\sqrt{2\pi}}{3} \pBig{\frac{\ln2}{3 \ln \sigma}}^{3/2} \quad (\sigma \ge 2)$;
\item for $r\in\N$ and any $\alpha\in (0, r)$, we have $f\not\in \Lip_r(\alpha)$.
\end{aufzaehlung}
\end{proposition}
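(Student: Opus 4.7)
I would first pin down the Fourier-side structure of $f$. Writing $\varphi_n(t) := \sinc^2(t/(4\pi))\,e^{i(2^n - 1/2)t}$ and $I_n := [2^n - 1, 2^n]$, Lemma~\ref{lemma-example} applied with $b = 1/(4\pi)$ and $c = 2^n - 1/2$ gives that $\wh\varphi_n$ is supported on $I_n$ with $\norm{\wh\varphi_n}_{L^2(\R)}^2 = 8\pi/3$. The supports $I_n$ ($n\ge 1$) are pairwise disjoint unit intervals, so a Riesz--Fischer argument of the kind used in the proof of Proposition~\ref{prop_counter_allgemein} yields that the series defining $f$ converges in $\Ltwo$, that $\wh f(v) = \sum_{n\ge 1} n^{-2}\wh\varphi_n(v)$, and that $\int_A |\wh f(v)|^2\,dv = \sum_{n\ge 1}n^{-4}\int_A |\wh\varphi_n(v)|^2\,dv$ for every Borel set $A\subset\R$.

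\textbf{Proof of (i).} Since each $I_n$ coincides with the integer unit interval $[2^n-1, 2^n]$, the $M^{2,1}$-norm \eqref{eq_classical_modulation_norm} telescopes to
\[
\norm{f}_{M^{2,1}} = \sum_{n\ge 1}n^{-2}\norm{\wh\varphi_n}_{L^2(I_n)} = \sqrt{8\pi/3}\sum_{n\ge 1}n^{-2} < \infty,
\]
so $f\in M^{2,1}$. To show $f\notin\Mn$, I would test the supremum in \eqref{M_new2} at $h_N := 2^{-N}$. For every $n\ge N+1$ the interval $I_n$ has length $1\le 2^N = 1/h_N$, so it meets at most two consecutive blocks $[j/h_N, (j+1)/h_N]$, all with index $j\ge 1$, avoiding the excluded set $\{-1,0\}$. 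The lacunary gap $2^{n+1}-2^n = 2^n\ge 2/h_N$ ensures that the (at most two) blocks hit by $I_n$ are disjoint from those hit by $I_{n'}$ for $n'\ne n$. Using $\sqrt a + \sqrt b\ge \sqrt{a+b}$ to recombine a possibly split $I_n$, I obtain
\[
\mathcal N_{h_N}(f) \ge \sum_{n\ge N+1}\frac{1}{n^2}\brbigg{\frac{1}{h_N}\int_{I_n}|\wh\varphi_n(v)|^2\,dv}^{1/2} = \sqrt{\tfrac{8\pi}{3}}\cdot 2^{N/2}\sum_{n\ge N+1}n^{-2},
\]
which blows up like $2^{N/2}/N$ as $N\to\infty$, so $\CN(f) = \infty$ and $f\notin\Mn$.

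\textbf{Proof of (ii) and (iii).} By Proposition~\ref{prop_dist_representation}\,a) combined with the disjoint-support decomposition,
\[
\dist_2(f, B_\sigma^2)^2 = \sum_{n\ge 1}n^{-4}\int_{I_n\cap\{|v|\ge\sigma\}}|\wh\varphi_n(v)|^2\,dv.
\]
Setting $K := \lceil \log_2(\sigma+1)\rceil$, for every $n\ge K$ one has $2^n - 1\ge\sigma$, hence $I_n\subset\{|v|\ge\sigma\}$ and the $n$-th integral equals $8\pi/3$. Thus
\[
\dist_2(f, B_\sigma^2)^2 \ge \frac{8\pi}{3}\sum_{n\ge K}n^{-4} \ge \frac{8\pi}{9K^3}.
\]
For $\sigma\ge 2$ one checks $K\le 2+\log_2\sigma\le 3\log_2\sigma = 3\ln\sigma/\ln 2$, which combined with $\sqrt{8\pi/9}/K^{3/2} = (2\sqrt{2\pi}/3)K^{-3/2}$ gives the stated inequality in (ii). Part (iii) is then immediate: the equivalence (i)$\,\Leftrightarrow\,$(ii) of Theorem~\ref{thm_equiv_new}, restated as $f\in\Lip_r(\alpha)\iff\dist_2(f, B_\sigma^2)=\Oh(\sigma^{-\alpha})$, cannot accommodate the lower bound $\dist_2(f, B_\sigma^2)\gtrsim (\ln\sigma)^{-3/2}$ for any $\alpha>0$, no matter how $r$ is chosen.

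\textbf{Main obstacle.} The delicate step is the lower bound on $\mathcal N_{h_N}(f)$ in part (i). It requires careful geometric bookkeeping: verifying that distinct lacunary supports $I_n$ fall into disjoint $h_N$-blocks, that these blocks avoid the excluded indices $j\in\{-1,0\}$, and that any $I_n$ straddling a block boundary can be safely reassembled via $\sqrt a+\sqrt b\ge\sqrt{a+b}$. Once this combinatorial setup is pinned down, parts (ii) and (iii) follow by routine integral estimates exploiting the lacunary structure of the Fourier support.
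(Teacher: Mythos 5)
Your proposal is correct and follows essentially the same route as the paper's proof: the same disjoint lacunary supports $[2^n-1,2^n]$, the same test scales $h=2^{-N}$ for showing $\mathcal N(f)=\infty$, the same tail-sum lower bound $\frac{8\pi}{9}K^{-3}$ with $K\asymp\log_2\sigma$ for (ii), and the same appeal to Theorem~\ref{thm_equiv_new} for (iii). The only cosmetic difference is that you allow an $I_n$ to straddle two $h_N$-blocks and recombine via $\sqrt a+\sqrt b\ge\sqrt{a+b}$, whereas the paper observes each $I_n$ with $n\ge N+1$ lies in a single block; both are fine.
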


\begin{proof} Obviously $f\in\Ltwo\cap C(\R)$, and by Lemma~\ref{lemma-example} the Fourier transform of
\begin{equation}\label{def_psi}
\psi_n(t) :=  \sinc^2\left(\frac{t}{4\pi}\right) e^{i(2^n-1/2)t}
\end{equation}
has support
\begin{equation}\label{interval_J}
     J_n:=[2^n -1, 2^n]
\end{equation}
and
\[
   \bignorm{\widehat{\psi}_n}_{L^2(J_n)} = 2\,\sqrt{\frac{2\pi}{3}}\,.
\]
Therefore
\[\sum_{n\in\Z} \bignorm{\widehat{f}\,}_{L^2[n, n+1]}
 % \sum_{n\in\Z}\brbigg{\int_n^{n+1} \bigabs{\wh{f}(v)}^2\dv}^{1/2}
  = \sum_{n=1}^\infty \frac1{n^2}\, \bignorm{\widehat{\psi}_n}_{L^2(J_n)}
   = \frac{\pi^2}{3}\, \sqrt{\frac{2\pi}{3}}\,,
\]
and so $f\in M^{2,1}$.

In order to show that $f\not\in \Mn$, it suffices to prove that the series $\mathcal N_h(f)$ of \eqref{Nhf} is not uniformly bounded for $h\in (0, 1]$. Indeed, replace $h$ by $h_k:= 2^{-k}$ which lies in $(0, 1]$ for each $k\in\N_0$. Then
\begin{align*}
 \mathcal{N}_{h_k}(f) ={} &\sum_{n=1}^\infty \brbigg{2^k \int_{n2^k}^{(n+1)2^k} \bigabs{\wh{f}(v)}^2dv}^{1/2}
 \ge 2^{k/2} \sum_{n=k+1}^\infty\brbigg{\int_{J_n} \frac1{n^4}\,{\bigabs{\widehat{\psi}_n(v)}^2}dv}^{1/2}\\[2ex]
 {}={}& \sqrt{\frac{2\pi}{3}}\, 2^{k/2 +1} \sum_{n=k+1}^\infty \rez{n^2} \ge \sqrt{\frac{2\pi}{3}}\, \frac{2^{k/2 +1}}{k+1}\,.
\end{align*}
Clearly, the right-hand side does not remain bounded for $k\in \N$ and so $f\not\in \Mn.$ This completes the proof of statement (i).

Next, given $\sigma\ge 2$, we denote by $k_\sigma$ the uniquely determined integer such that
\begin{equation}\label{definition_k_sigma}
2^{k_\sigma -1} -1\, <\, \sigma \,\le\, 2^{k_\sigma} - 1.
\end{equation}
Then
\begin{align}
\pbig{\dist_2(f, B_\sigma^2)}^2 = {} & \int_{\abs{v}\ge\sigma} \bigabs{\widehat{f}(v)}^2\,dv   \ge  \sum_{n=k_\sigma}^\infty
         \frac1{n^4}\,{\bignorm{\widehat{\psi}_n}_{L^2(J_n)}^2}\notag\\[1.5ex]
{} = {} & \frac{8\pi}{3}\, \sum_{n=k_\sigma}^\infty \rez{n^4} \ge \frac{8\pi}{3} \int_{k_\sigma}^\infty \frac{dx}{x^4}
   = \frac{8\pi}{9 k_\sigma^3}\,\label{lower_bound}.
\end{align}
From the first inequality in \eqref{definition_k_sigma} we deduce that
$$ k_\sigma\,< \, \frac{\ln(\sigma+1) + \ln 2}{\ln 2}\, \le\,
\frac{ 3 \ln \sigma}{\ln 2}$$
for $\sigma\ge 2.$
Substituting this bound in (\ref{lower_bound}), we obtain statement (ii).

Finally, statement (iii) follows from (ii) by means of Theorem~\ref{thm_equiv_new}.
\end{proof}

\begin{remark}
In view of statement (ii) of Proposition~\ref{prop_counterex2} it should be observed that for a function $g\in M^{2,1}$ the convergence to zero of $\dist_2(g, B_\sigma^2)$ as $\sigma\to\infty$ can be arbitrarily slow. For the construction of a function with a prescribed (slow) convergence rate, we may simply modify the function $f$ in Proposition~\ref{prop_counterex2} by replacing $2^n$ in its definition by a sufficiently fast increasing sequence of natural numbers $m_n$.
\end{remark}

The following two propositions will serve us for comparing $\Rieszhal$ with $\Mn$.

\begin{proposition}\label{prop_counterex3}
The function $f$, given by
\begin{equation}\label{eq_counter_oh}
    f(t):= \sinc^2\pBig{\frac{t}{4\pi}} \sum_{n=1}^\infty \frac{e^{i(2^n-1/2)t}}{2^{n/2}},
\end{equation}
belongs to $\Mn\setminus \Rieszhal$.
\end{proposition}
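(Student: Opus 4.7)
The plan is to exploit the lacunary frequency-support structure of the series defining $f$. Setting $\psi_k(t) := \sinc^2(t/(4\pi))\,e^{i(2^k-1/2)t}$ as in \eqref{def_psi}, Lemma~\ref{lemma-example} gives $\wh\psi_k$ supported on the pairwise disjoint intervals $J_k := [2^k-1,\,2^k]$ of length $1$, with $\|\wh\psi_k\|_{L^2(J_k)}^2 = 8\pi/3$. Consequently $\wh f(v) = \sum_{k=1}^\infty 2^{-k/2}\wh\psi_k(v)$ and, by disjointness, $|\wh f(v)|^2 = \sum_{k=1}^\infty 2^{-k}|\wh\psi_k(v)|^2$ almost everywhere, from which $\|f\|_{\Ltwo}^2 = \sum_{k\ge 1} 2^{-k}\cdot 8\pi/3 < \infty$ is immediate.

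The non-membership $f\notin\Rieszhal$ is the easy half. By Proposition~\ref{prop_Riesz_existence_equiv}, it suffices to show $\int_\R|v|\,|\wh f(v)|^2\,dv = \infty$, and using disjointness together with the lower bound $|v|\ge 2^{k-1}$ on $J_k$ for $k\ge 1$,
\[
  \int_\R|v|\,|\wh f(v)|^2\,dv \,=\, \sum_{k=1}^\infty 2^{-k}\int_{J_k}|v|\,|\wh\psi_k(v)|^2\,dv \,\ge\,\sum_{k=1}^\infty 2^{-k}\cdot 2^{k-1}\cdot\tfrac{8\pi}{3} \,=\, \infty.
\]

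The harder part is $f\in\Mn$, i.e., uniform boundedness of $\Nh(f)$ for $h\in(0,1]$. With $I_n^h := [n/h,(n+1)/h]$ and $A_{n,h} := \int_{I_n^h}|\wh f|^2\,dv$, I would use three combinatorial observations: (a) since $|J_k|=1\le 1/h = |I_n^h|$, each $J_k$ meets with positive measure at most two of the intervals $I_n^h$; (b) two indices $k_1<k_2$ with $J_{k_i}\cap I_n^h\neq\emptyset$ force $2^{k_1}\ge n/h$ and $2^{k_2}\le (n+1)/h+1$, hence $2^{k_2-k_1}\le 3$, so at most two $k$'s contribute to any given $A_{n,h}$; (c) for $n\ge 1$ with $A_{n,h}>0$ the relevant $k$'s satisfy $2^k\ge n/h$, hence $k\ge k_0(h) := \lceil\log_2(1/h)\rceil$ and $h\,2^{k_0(h)}\ge 1$. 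Combining disjointness with the subadditivity $\sqrt{\sum a_i}\le\sum\sqrt{a_i}$, the plan is to bound $\sqrt{A_{n,h}/h}\le \sum_{k\in K_n}\sqrt{8\pi/(3\,h\,2^k)}$ with $K_n := \{k:J_k\cap I_n^h\neq\emptyset\}$, and then interchange the order of summation to get
\[
  \Nh(f)\,\le\,\sum_{k\ge k_0(h)} |N_k|\sqrt{\frac{8\pi}{3\,h\,2^k}} \,\le\,\frac{2\sqrt{8\pi/3}}{1-2^{-1/2}}\,(h\,2^{k_0(h)})^{-1/2}\,\le\,C,
\]
where $N_k := \{n\ge 1: k\in K_n\}$ satisfies $|N_k|\le 2$ by (a), and the final geometric summation in $k$ is $h$-independent because $h\,2^{k_0(h)}\ge 1$.

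The main obstacle is precisely the overlap bookkeeping between the dyadic intervals $J_k$ and the uniform partition $(I_n^h)_{n\in\Z}$; once that is settled, the geometric tail in $k$ produces the $h$-independent bound. The computation makes visible that the coefficient decay $2^{-k/2}$ is exactly the threshold that separates the two spaces: any slower decay would destroy membership in $\Mn$, while any faster decay would already put $f$ into $\Rieszhal$.
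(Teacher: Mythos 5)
Your proof is correct and follows essentially the same route as the paper: the divergence of $\int_\R|v|\,|\wh f(v)|^2\,dv$ via the disjoint supports $J_k$ for the negative half, and for $f\in\Mn$ the reindexing of $\Nh(f)$ by the dyadic blocks $k\ge k_0(h)$ with $h\,2^{k_0(h)}\ge 1$, square-root subadditivity, and the $h$-independent geometric tail. Your overlap bookkeeping (the multiplicity bounds $|K_n|\le 2$ and $|N_k|\le 2$) is in fact slightly more explicit than the paper's version of the same counting argument.
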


\begin{proof}
Let $N\in\N$. Using the notation (\ref{def_psi}) and (\ref{interval_J}), we conclude that
\begin{align*}
& \int_0^{2^N} \abs{v}\cdot \bigabs{\widehat{f}(v)}^2\,dv =\sum_{n=1}^N \rez{2^n} \int_{J_n} \abs{v}\cdot\bigabs{\widehat{\psi}_n(v)}^2\,dv\\[2ex]
{}\ge{} & \frac{8\pi}{3} \sum_{n=1}^N \frac{2^n-1}{2^n} \ge \frac{8\pi}{3}(N-1) \, \longrightarrow \infty \quad (N\to\infty),
\end{align*}
and so $f\notin \Rieszhal$.

In order to prove that $f\in \Mn$, it suffices to show that $\mathcal N(f) <\infty$.
%
%$$\sum_{n=1}^\infty \left(\rez{h} \int_{n/h}^{(n+1)/h} \abs{\widehat{f}(v)}^2\,dv\right)^{1/2}$$
%
%is bounded for $h\in(0,1]$.
%
First we note that for $h\in(0,1]$ and $n\ge 1$ the interval $H_n:=(n/h, (n+1)/h]$ can contain at most one power of $2$. Let $n_1\ge 1$ be the smallest integer such that $H_{n_1}$ contains a power of $2$, say $2^{j_1}\in H_{n_1}$. Then $1/h< 2^{j_1}/n_1 \le 2^{j_1}$ and therefore
\begin{align*}
& \sum_{n=1}^\infty \brbigg{\rez{h} \int_{n/h}^{(n+1)/h} \bigabs{\widehat{f}(v)}^2\,dv}^{1/2}
  \le \sum_{j=j_1}^\infty\brbigg{\rez{h2^j} \int_{J_j}\bigabs{\widehat{\psi}_j(v)}^2\,dv}^{1/2}\\[2ex]
{}\le{}& \sum_{j=j_1}^\infty\left(\frac{2^{j_1}}{2^j}\cdot\frac{8\pi}{3}\right)^{1/2} = \frac{4}{\sqrt{2}-1}\, \sqrt{\frac{\pi}{3}}
\end{align*}
independently of $h$. This completes the proof.
\end{proof}

The function $f$ defined in \eqref{eq_counter_oh} above also enables us to prove Proposition~\ref{prop_oh}.

{\renewcommand{\proofname}{Proof of Proposition~\ref{prop_oh}}
\begin{proof}
Since $\omega_r(f;\delta; L^2(\R)) = \oh\pbig{\delta^{1/2}}$ is equivalent to $\dist_2(f,B^2_\sigma)= \oh\pbig{\sigma^{-1/2}}$ by Theorem~\ref{thm_oh}, it is enough to prove the assertion concerning the modulus of continuity.

Consider the function \eqref{eq_counter_oh}, which was shown to belong to $\Mn$ in Proposition~\ref{prop_counterex3}. Replacing $h$ by $h_k:= \pi 2^{-k}$ with $k\in\N$, we want to establish a lower bound for the left-hand side of \eqref{Delta_bounds}. With $\psi_n$ and $J_n$ as in the proof of Proposition~\ref{prop_counterex3}, we find that

\begin{align*}
& {}\li(\frac{2}{\pi}\ri)^{2r} \int_{\bigabs{v}\le \pi/h_k} \bigabs{\wh{f}(v)}^2 (h_kv)^{2r} dv
=  \li(\frac{2}{\pi}\ri)^{2r} \int_{\abs{v}\le 2^k} \bigabs{\wh{f}(v)}^2 \pbig{\pi 2^{-k}v}^{2r} dv \\[2ex]
{}={}& \li(\frac{2}{\pi}\ri)^{2r} \sum_{n=1}^k\frac{\pi^{2r}}{2^{2kr+n}} \int_{J_n} \bigabs{\wh{\psi}_n(v)}^2 v^{2r} dv
\ge \frac{8\pi}{3}\cdot \rez{2^{2r(k-1)}} \sum_{n=1}^k \rez{2^n} \li(2^n-1\ri)^{2r}\\[2ex]
{}={}& \: \frac{8\pi}{3}\cdot \rez{2^{2r(k-1)}} \sum_{n=1}^k 2^{(2r-1)n} \li(1-\rez{2^n}\ri)^{2r}
    \ge \frac{8\pi}{3}\cdot \rez{2^{2rk}} \sum_{n=1}^k 2^{(2r-1)n}\\[2ex]
{}={}& \frac{8\pi}{3}\cdot \frac{2^{2r-1}}{2^{2rk}}\cdot \frac{2^{(2r-1)k}-1}{2^{2r-1}-1}
    \ge \frac{8\pi}{3}\cdot \rez{2^k}\,=\, \frac{8}{3}\,h_k\,.
\end{align*}
Now the left-hand side of \eqref{Delta_bounds} gives
\[
  \li\|\Delta^r_{h_k} f\ri\|_{L^2(\R)}^2 \ge \frac{8}{3} h_k \qquad (k\in\N).
\]
This implies $\omega_r(f;\delta; L^2(\R)) \neq \oh\pbig{\delta^{1/2}}$.
\end{proof}}

In the next proposition, which concerns functions belonging to $\Rieszhal$ but not to $\Mn$, it is just the exponent $3/4$ of the logarithm in \eqref{eq_counter_ln} that does the job.

\begin{proposition}\label{prop_counterex4}
The function $f$, given by
\begin{equation}\label{eq_counter_ln}
     f(t) := \sinc^2\left(\frac{t}{4\pi}\right) \sum_{n=2}^\infty \frac{e^{i(n^2-1/2)t}}{n^{3/2}\,(\ln n)^{3/4}},
\end{equation}
belongs to $\pbig{\Rieszhal \cap F^2 }\setminus \Mn$.
\end{proposition}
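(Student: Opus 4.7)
The plan is to apply Lemma~\ref{lemma-example} to the atoms $\psi_m(t) := \sinc^2(t/(4\pi))\,e^{i(m^2-1/2)t}$, with $b=1/(4\pi)$ and $c=m^2-1/2$. This gives that $\widehat\psi_m$ is supported in the unit interval $J_m := [m^2-1,m^2]$, with $\bignorm{\widehat\psi_m}_{L^1(J_m)} = \sqrt{2\pi}$ and $\bignorm{\widehat\psi_m}_{L^2(J_m)}^2 = 8\pi/3$. Since $m^2-(m-1)^2 = 2m-1\geq 3$ for $m\geq 2$, the intervals $J_m$ are pairwise disjoint and all lie in the positive half-line, so the Fourier norms of $\widehat f = \sum_{m\geq 2} m^{-3/2}(\ln m)^{-3/4}\,\widehat\psi_m$ split cleanly into scalar series.

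The easy inclusions $f\in\Rieszhal\cap F^2$ will then reduce to routine series estimates. Continuity of $f$ follows from the Weierstrass $M$-test with $M_m := m^{-3/2}(\ln m)^{-3/4}$, and disjointness of the $J_m$ yields
\[
  \bignorm{\widehat f\,}_{L^1(\R)} = \sqrt{2\pi}\sum_{m\geq 2} M_m < \infty,\qquad \bignorm{\widehat f\,}_{L^2(\R)}^2 = \frac{8\pi}{3}\sum_{m\geq 2}\frac{1}{m^3(\ln m)^{3/2}} < \infty,
\]
so $f\in F^2$. Bounding $\abs{v}\le m^2$ on $J_m$ also gives $\int_\R \abs{v}\,\bigabs{\widehat f(v)}^2\,dv \leq \frac{8\pi}{3}\sum_{m\geq 2}m^{-1}(\ln m)^{-3/2}<\infty$, hence $f\in\Rieszhal$.

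The main work is showing $f\notin\Mn$. Following the strategy of Proposition~\ref{prop_counterex2} but exploiting that the frequencies $m^2$ are now spaced only quadratically, I would choose $h_k := 1/k^2\in(0,1]$ and cut the positive axis into blocks $B_n := [nk^2,(n+1)k^2]$ of length $1/h_k = k^2$. The key geometric point is that $J_m\subseteq B_n$ exactly when $k\sqrt n \leq m \leq k\sqrt{n+1}$, so for $1\leq n\leq k^2/36$ the set $A_n := \{m\geq 2 : J_m\subseteq B_n\}$ satisfies $|A_n|\gtrsim k(\sqrt{n+1}-\sqrt n)\gtrsim k/\sqrt n$, while every $m\in A_n$ obeys $m\asymp k\sqrt n$ and $\ln m\asymp \ln k$. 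Inserting these bounds should give
\[
  \frac{1}{h_k}\int_{B_n}\bigabs{\widehat f(v)}^2\,dv = k^2\cdot\frac{8\pi}{3}\sum_{m\in A_n}\frac{1}{m^3(\ln m)^{3/2}}\gtrsim \frac{k^2\cdot (k/\sqrt n)}{(k\sqrt n)^3(\ln k)^{3/2}} = \frac{c}{n^2(\ln k)^{3/2}},
\]
and taking square roots and summing will produce
\[
  \mathcal N_{h_k}(f)\gtrsim \sum_{n=1}^{\lfloor k^2/36\rfloor}\frac{1}{n\,(\ln k)^{3/4}}\gtrsim (\ln k)^{1/4},
\]
which diverges as $k\to\infty$, so $\mathcal N(f)=\infty$.

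The main obstacle is making the counting and the uniform estimates in this last step rigorous: I must verify that $|A_n|\ge 1$ throughout the stated range of $n$ (so no block drops out of the sum), dispose of the harmless boundary cases where $m^2$ coincides with an endpoint $nk^2$, and keep the implicit constants uniform in both $n$ and $k$. The exponent $3/4$ on the logarithm is precisely calibrated: any exponent $s>1/2$ would make $f\in\Rieszhal\cap F^2$ by the same argument, while $s<1$ is what preserves the $(\ln k)^{1-s}$ blow-up of $\mathcal N_{h_k}(f)$, and $3/4$ meets both constraints.
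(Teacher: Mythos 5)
Your proposal is correct and follows essentially the same route as the paper's proof: the membership in $\Rieszhal\cap F^2$ via the disjointly supported atoms and Lemma~\ref{lemma-example}, and the failure of $f\in\Mn$ by testing $\mathcal N_{h}$ at $h_k=k^{-2}$, counting the perfect squares falling in each block $[nk^2,(n+1)k^2]$ for $n\lesssim k^2/36$, and extracting the divergent $(\ln k)^{1/4}$ lower bound. The routine verifications you flag (that each block in the stated range captures at least one square, uniformity of constants) are exactly the elementary estimates the paper carries out, so no gap remains.
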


\begin{proof}
By Lemma~\ref{lemma-example} we have
\[
  \bignorm{\wh{f}\mspace{4mu}}_{\Lone}=\sqrt{2\pi}\,\sum_{n=2}^\infty \frac{1}{n^{3/2}\,(\ln n)^{3/4}}<\infty,
\]
implying $f\in F^2$.  Also by Lemma~\ref{lemma-example}, the Fourier transform of
\[
    \chi_n(t):= \sinc^2\left(\frac{t}{4\pi}\right) e^{i(n^2-1/2)t}
\]
has support $K_n:=[n^2 -1, n^2]$ and $\|\widehat{\chi}_n\|_{L^2(K_n)} =  2\, \sqrt{2\pi/3}$.
Hence
\begin{align*}
& \int_\R \abs{v}\cdot\bigabs{\widehat{f}(v)}^2\dv
            = \sum_{n=2}^\infty \rez{n^3 (\ln n)^{3/2}} \int_{K_n} v \bigabs{\widehat{\chi}_n(v)}^2\,dv \\[1ex]
{}\le {} & \sum_{n=2}^\infty \rez{n (\ln n)^{3/2}} \left\|\widehat{\chi}_n\right\|_{L^2(K_n)}^2
  = \frac{8\pi}{3} \sum_{n=2}^\infty \rez{n (\ln n)^{3/2}} < \infty,
\end{align*}
which shows that $f\in \Rieszhal$.

Now we show that $f\not\in \Mn.$ Considering \eqref{Nhf} for $h=h_k:=k^{-2}$ with $k\in\N$, we have
\begin{equation}\label{uniform_convy}
  \mathcal{N}_{h_k}(f) = \sum_{n=1}^\infty\brbigg{k^2 \int_{nk^2}^{(n+1)k^2} \bigabs{\widehat{f}(v)}^2\dv}^{1/2}.
\end{equation}
We shall show that the right-hand side approaches infinity as $k\to\infty$.

If $\mu\in\N$ and
\begin{equation}\label{mu_bounds}
  nk^2  <  \mu^2  \le (n+1)k^2\,,
\end{equation}
then $K_\mu \subset [nk^2, (n+1)k^2]$ and so
\begin{equation}\label{lower_bound1}
  k^2 \int_{K_\mu} \bigabs{\widehat{f}(v)}^2 \dv = \frac{8\pi k^2}{3\mu^3 (\ln \mu)^{3/2}}
  \ge \frac{8\pi}{3}\cdot \rez{(n+1)^{3/2} k \bkbig{\ln\big((n+1)^{1/2} k\big)}^{3/2}}\,.
\end{equation}
A short reflection shows that there exist at least $m_{n,k}:= (n+1)^{1/2} k - n^{1/2} k -2$ different integers $\mu$ satisfying (\ref{mu_bounds}). If $k > 12$ and
\begin{equation}\label{n_bounds}
 1 \le n \le \frac{k^2}{36} - 1,
\end{equation}
then
\[
   m_{n,k} \ge \frac{k}{6(n+1)^{1/2}} \ge 1\,.
\]
With this estimate we obtain in view of (\ref{lower_bound1}),
\[
   k^2 \int_{nk^2}^{(n+1)k^2} \bigabs{\widehat{f}(v)}^2\dv \ge \frac{4\pi}{9}\cdot \rez{(n+1)^2 \bkbig{\ln\pbig{(n+1)^{1/2} k}}^{3/2}}
\]
for $n$ satisfying \eqref{n_bounds}. Thus, setting $N_k := \left\lfloor k^2/36 -1 \right\rfloor$ and recalling \eqref{uniform_convy}, we have
\begin{align*}
  & \mathcal{N}_{h_k}(f) \ge \sum_{n=1}^{N_k}\brbigg{k^2 \int_{nk^2}^{(n+1)k^2} \bigabs{\widehat{f}(v)}^2\dv}^{1/2}\\[2ex]
  {}\ge{}& \frac{2\sqrt{\pi}}{3} \, \sum_{n=1}^{N_k} \rez{(n+1) \bkbig{\ln\pbig{(n+1)^{1/2}k}}^{3/4}}
                                 \ge \frac{2\sqrt{\pi}}{3} \, \sum_{n=2}^{N_k+1} \rez{n\bkbig{\ln(nk)}^{3/4}}\\[2ex]
  {} \ge {} & \frac{2\sqrt{\pi}}{3} \,k \int_2^{k^2/36} \frac{dx}{kx \bkbig{\ln(kx)}^{3/4}}
                                  = \frac{2\sqrt{\pi}}{3} \, \int_{2k}^{k^3/36} \frac{du}{u[\ln u]^{3/4}}\\[2ex]
  {} \ge {} & \frac{2\sqrt{\pi}}{3}\, \bkbig{\ln(2k)}^{1/4} \int_{2k}^{k^3/36} \frac{du}{u \ln u}
         =  \frac{2\sqrt{\pi}}{3} \,\bkbig{\ln(2k)}^{1/4} \bkbig{\ln(\ln u)}_{u=2k}^{u=k^3/36}.
\end{align*}
The right-hand side approaches infinity as $k\to\infty$.
\end{proof}

Next we compare $\Rieszhal\cup \Mn$ with $\Lip_r(\hal)\cap F^2$.

\begin{proposition}\label{prop_counterex5}
The function $f$, given by
\[
  f(t) := \sinc^2\left(\frac{t}{4\pi}\right) \sum_{n=2}^\infty \frac{e^{i(n^2-1/2)t}}{n^{3/2}},
\]
belongs to $\bigl(\Lip_r(\hal)\cap F^2\bigr)\setminus \bigl(\Rieszhal\cup \Mn\bigr)$.
\end{proposition}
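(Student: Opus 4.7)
My plan is to adapt the constructions used in Propositions~\ref{prop_counterex3} and \ref{prop_counterex4}, which already handle the structural questions; the new ingredient is that we have dropped the logarithmic damping factor of \eqref{eq_counter_ln}. Throughout, I write $\chi_n(t):=\sinc^2(t/(4\pi))e^{i(n^2-1/2)t}$, so that $f=\sum_{n\ge2}n^{-3/2}\chi_n$, and I rely on Lemma~\ref{lemma-example}, which tells me that $\wh\chi_n$ has support $K_n=[n^2-1,n^2]$, with $\|\wh\chi_n\|_{L^1(K_n)}=\sqrt{2\pi}$ and $\|\wh\chi_n\|_{L^2(K_n)}^2=8\pi/3$; crucially the intervals $K_n$ are pairwise disjoint for $n\ge 2$. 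The membership $f\in F^2$ will then be immediate from
\[
 \bignorm{\wh f\,}_{L^1(\R)}=\sqrt{2\pi}\sum_{n=2}^\infty \frac{1}{n^{3/2}}<\infty,
\]
and the $L^2$-convergence of the defining series follows as in the earlier propositions.

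To obtain $f\in\Lip_r(\hal)$, I will use the equivalence (i)\,$\Leftrightarrow$\,(ii) of Theorem~\ref{thm_equiv_new} for $\alpha=\hal$ and verify the tail decay directly: given $\sigma\ge 4$, pick $n_\sigma\in\N$ with $(n_\sigma-1)^2\le\sigma\le n_\sigma^2$, so that by disjointness of the $K_n$ and the identity $\int_{K_n}|\wh\chi_n|^2\dv=8\pi/3$,
\[
 \int_{\abs v\ge\sigma}\bigabs{\wh f(v)}^2\dv\,\le\,\sum_{n\ge n_\sigma}\frac{1}{n^3}\cdot\frac{8\pi}{3}\,=\,\Oh\pbig{n_\sigma^{-2}}\,=\,\Oh\pbig{\sigma^{-1}}.
\]
That $f\notin\Rieszhal$ is equally clean: by Proposition~\ref{prop_Riesz_existence_equiv}(iii), membership would force $|v|\,\bigabs{\wh f(v)}^2\in\Lone$, but
\[
 \int_\R |v|\bigabs{\wh f(v)}^2\dv\,\ge\,\sum_{n=2}^\infty\frac{n^2-1}{n^3}\cdot\frac{8\pi}{3}\,=\,\infty.
\]

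The main obstacle is showing $f\notin\Mn$, and here I will recycle the counting technology of Proposition~\ref{prop_counterex4}, but with the weaker denominator producing a logarithmic (rather than stronger) divergence. Choose the test scale $h_k:=k^{-2}$ for large $k\in\N$ and, for each $n$, count integers $\mu$ whose intervals $K_\mu=[\mu^2-1,\mu^2]$ lie inside $(nk^2,(n+1)k^2]$; exactly as in Proposition~\ref{prop_counterex4}, for $1\le n\le k^2/36-1$ there are at least $m_{n,k}\ge k/(6\sqrt{n+1})$ such indices $\mu$, and each of them satisfies $\mu^3\le (n+1)^{3/2}k^3$. Since on $K_\mu$ the contribution is $\int_{K_\mu}|\wh f|^2\dv = 8\pi/(3\mu^3)$, disjointness gives
\[
 k^2\int_{nk^2}^{(n+1)k^2}\bigabs{\wh f(v)}^2\dv\,\ge\,\frac{k}{6\sqrt{n+1}}\cdot\frac{8\pi}{3(n+1)^{3/2}k}\,=\,\frac{4\pi}{9(n+1)^2}.
\]
Taking square roots and summing over $1\le n\le N_k:=\lfloor k^2/36-1\rfloor$,
\[
 \Nh[h_k](f)\,\ge\,\sum_{n=1}^{N_k}\frac{2\sqrt\pi}{3(n+1)}\,\ge\,\frac{2\sqrt\pi}{3}\,\ln(N_k+1)-\frac{2\sqrt\pi}{3}\,\longrightarrow\,\infty\quad(k\to\infty),
\]
so $\CN(f)=\sup_{0<h\le 1}\Nh(f)=\infty$, contradicting $f\in\Mn$. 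The subtlety in this last step is that after removing the $(\ln n)^{3/4}$ factor of \eqref{eq_counter_ln} the divergence rate of $\Nh[h_k](f)$ drops from a power of $\ln k$ to merely $\log k$, so the counting bound $m_{n,k}\ge k/(6\sqrt{n+1})$ has to be exploited tightly; nothing in the argument, however, is genuinely new beyond what Proposition~\ref{prop_counterex4} already demonstrates.
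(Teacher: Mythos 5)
Your proposal is correct and follows exactly the route the paper intends: the paper's own ``proof'' merely remarks that $f\in\Lip_r(\hal)\cap F^2$ is straightforward and that $f\notin \Rieszhal\cup\Mn$ follows by slight modifications of the proof of Proposition~\ref{prop_counterex4}, and your write-up is precisely that modification carried out in full (with the counting bound $m_{n,k}\ge k/(6\sqrt{n+1})$ reused verbatim and the divergence of $\mathcal N_{h_k}(f)$ now coming from the harmonic sum $\sum (n+1)^{-1}$). All the individual estimates check out, so nothing further is needed.
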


\begin{proof}
The proof that $f\in \Lip_r(\hal)\cap F^2$ is quite straightforward and may be left to the reader. For showing that $f\not\in \Rieszhal\cup \Mn$ one only needs slight modifications of the previous proof.
\end{proof}

For the following comparisons of spaces we use functions $f_\gamma\in \Ltwo$, $\gamma>1/2$, defined via their Fourier transforms by
\begin{equation*}%\label{eq_f_gammax}
  \wh f_\gamma(v):=
  \begin{cases}
    0,&\D \abs{v}<1,\\[1ex]
    \D\abs{v}^{-\gamma},&\abs{v}\ge 1.
  \end{cases}
\end{equation*} .

\begin{proposition}\label{prop_counter_last}
a) Let $0 < \beta < \alpha$ and $1/2 + \beta <\gamma < 1/2+\alpha$. Then, $f_\gamma$ belongs to $\Rieszbeta\setminus \Lip_r(\alpha)$.
If, in addition, $\beta \ge 1/2$, then there even holds $f_\gamma \in \pbig{\Rieszbeta\setminus \Lip_r(\alpha)}\cap C(\R)$.

\medskip\noindent
b) Let $\alpha >1/2$ and $1<\gamma < 1/2+\alpha$. Then $f_\gamma\in \pbig{\Rieszhal \cap \Mn} \setminus \Lip_r(\alpha)$.
\end{proposition}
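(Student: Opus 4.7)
Proof proposal for Proposition~\ref{prop_counter_last}:

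The plan is to handle both parts by direct computation with the Fourier transforms, since $\wh f_\gamma$ is explicit and elementary. For part (a), I will first verify that $f_\gamma\in\Rieszbeta$ by computing
\[
  \int_\R |v|^{2\beta}\bigabs{\wh f_\gamma(v)}^2\,dv = 2\int_1^\infty v^{2\beta-2\gamma}\,dv,
\]
which is finite precisely when $2\gamma-2\beta>1$, i.e., $\gamma>\beta+1/2$, which is the left hypothesis of~(a). To disprove membership in $\Lip_r(\alpha)$, I will invoke the equivalence (i)$\Leftrightarrow$(ii) of Theorem~\ref{thm_equiv_new} and estimate the tail
\[
  \int_{\abs v\ge\sigma}\bigabs{\wh f_\gamma(v)}^2\,dv = \frac{2}{2\gamma-1}\,\sigma^{-(2\gamma-1)}\qquad(\sigma\ge 1).
\]
This is $\Oh(\sigma^{-2\alpha})$ if and only if $2\gamma-1\ge 2\alpha$, i.e., $\gamma\ge\alpha+1/2$; since we are assuming $\gamma<\alpha+1/2$, the condition~(ii) fails, so $f_\gamma\notin\Lip_r(\alpha)$.

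For the continuity claim in~(a) under the additional hypothesis $\beta\ge 1/2$, note that then $\gamma>\beta+1/2\ge 1$, so
\[
  \bignorm{\wh f_\gamma}_{L^1(\R)} = 2\int_1^\infty v^{-\gamma}\,dv = \frac{2}{\gamma-1}<\infty,
\]
hence $f_\gamma$ agrees a.e.~with a continuous function via Fourier inversion; we identify $f_\gamma$ with that continuous representative.

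For part~(b), the membership $f_\gamma\in\Rieszhal\cap C(\R)$ is immediate from part~(a) applied with $\beta=1/2$ (both hypotheses $\beta\ge 1/2$ and $\gamma>1=\beta+1/2$ are satisfied), and $f_\gamma\notin\Lip_r(\alpha)$ is likewise just part~(a). The only genuinely new piece is to verify $f_\gamma\in\Mn$. I would estimate $\mathcal N(f_\gamma)$ directly: for $0<h\le 1$ and $n\ge 1$ the interval $[n/h,(n+1)/h]$ lies in $[1,\infty)$, and by the mean value theorem,
\[
  \frac1h\int_{n/h}^{(n+1)/h} v^{-2\gamma}\,dv
    = \frac{h^{2\gamma-2}}{2\gamma-1}\bk{n^{-(2\gamma-1)}-(n+1)^{-(2\gamma-1)}}
    \le h^{2\gamma-2}\,n^{-2\gamma}.
\]
Taking square roots and summing (symmetrically also over $n\le -2$), and using that $h^{\gamma-1}\le 1$ because $\gamma>1$ and $h\le 1$,
\[
  \mathcal N(f_\gamma)\le 2\,h^{\gamma-1}\sum_{n=1}^\infty n^{-\gamma}\le 2\sum_{n=1}^\infty n^{-\gamma}<\infty,
\]
uniformly in $h\in(0,1]$. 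Together with $\norm{f_\gamma}_{\Ltwo}<\infty$, this yields $f_\gamma\in\Mn$.

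The steps are essentially all routine estimates of power integrals; the only mildly delicate point is the uniformity in $h$ of the $\Mn$ bound, which is the reason the hypothesis $\gamma>1$ (rather than merely $\gamma>1/2$) enters. That is where I would be most careful in the write-up.
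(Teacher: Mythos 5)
Your proposal is correct and follows essentially the same route as the paper: membership in $\Rieszbeta$ directly from the definition, non-membership in $\Lip_r(\alpha)$ via Theorem~\ref{thm_equiv_new}\,(i)$\,\Leftrightarrow\,$(ii) applied to the explicit tail integral, continuity from $\wh f_\gamma\in\Lone$ when $\gamma>1$, and part~b) by specializing a) to $\beta=1/2$ together with a direct verification of $\mathcal N(f_\gamma)<\infty$ from the definition of $\Mn$. The paper leaves these computations as a two-line sketch; you have simply filled them in, and your uniform-in-$h$ bound $\mathcal N(f_\gamma)\le 2\sum_{n\ge1}n^{-\gamma}$ is exactly the point where the hypothesis $\gamma>1$ is used.
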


\begin{proof}
a) follows from the definition of $\Rieszbeta$ and Theorem~\ref{thm_equiv_new}\,(i)$\,\Leftrightarrow\,$(ii). For the second assertion in  a), one has additionally to note that the $f_\gamma$ can be chosen as a continuous function, since $\wh f_\gamma\in\Lone$ for $\gamma>1$.

Concerning b), one uses part a) for $\beta=1/2$ and the definition of $\Mn$.
\end{proof}

\section{Appendix}
In our joint paper \cite{Butzer-Schmeisser-Stens_2014a}, we had introduced  the modified modulation space $M^{2,1}_*$, also a subspace of the classical modulation space $M^{2,1}$;
%introduced by H.\,G.~Feichtinger \cite{Feichtinger_1981}
see \eqref{hans}, \eqref{eq_classical_modulation_norm}.
%
%To facilitate the readers' understanding, let us recall its definition. If
%%
%\begin{gather*}
%%
%   M^{2,1}=M^{2,1}(\R) := \li\{\strut f\;:\; f:=\wh{g},\, \, g\in W(L^2, \ell^1)\ri\}\\[2ex]
%%
%    \|f\|_{M^{2,1}} :=
%     \sum_{n\in\Z}\biggbr{\int_n^{n+1} \bigabs{\wh{f}(v)}^2\,dt}^{1/2}
%   = \Bignorm{\pBig{\bignorm{\wh f}_{L^2(n,n+1)}}_{n\in\Z}}_{\ell^1}.
%\end{gather*}
%
%is the classical modulation space, then
We defined $M^{2,1}_\ast$ as the set of all functions $f\in M^{2,1}$ such that the series

\begin{equation}\label{scaled_mod1}
  \sum_{n\in\Z} \rez{h} \biggbr{\int_n^{n+1} \abs{\wh{f}\li(\frac{v}{h}\ri)}^2\, dv}^{1/2}
  = \sum_{n\in\Z}  \biggbr{\rez{h}\int_{n/h}^{(n+1)/h} \bigabs{\wh{f}(v)}^2\,dv}^{1/2} < \infty.
\end{equation}
converges uniformly with respect to $h$ on bounded subintervals of $(0,\infty)$.

The connection between the three spaces $M^{2,1}$, $M^{2,1}_\ast$ and the readapted modulation space $\Mn$ is given in the following proposition.

\begin{proposition}\label{M_new_prop1}
We have $M^{2,1}_\ast \subsetneqq \Mn\subsetneqq M^{2,1}\subsetneqq F^2\cap S^2_h$ for all $h>0$.
\end{proposition}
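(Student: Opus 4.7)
The chain decomposes into three strict inclusions, of which only the leftmost carries substantive new content.

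\textbf{Right and middle inclusions.} For $M^{2,1}\subsetneqq F^2\cap S_h^2$, this is the third strict inclusion of \eqref{eq_Walpha_inclusion} in Proposition~\ref{propa1}, with strictness witnessed by Proposition~\ref{prop_counter_allgemein}\,(iv); no new argument is required. For $\Mn\subseteq M^{2,1}$, the norm inequality $\|f\|_{M^{2,1}}\le 2\|f\|_{\Mn}$ is exactly \eqref{eq_normvergleich}: restrict the supremum in the definition of $\|f\|_{\Mn}$ to $h=1$, and bound the two omitted central blocks $n\in\{-1,0\}$ by $\sqrt{2}\,\|f\|_{L^2}$ using Cauchy--Schwarz. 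Strictness is supplied by Proposition~\ref{prop_counterex2}.

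\textbf{The left inclusion $M^{2,1}_\ast\subseteq\Mn$.} Let $f\in M^{2,1}_\ast$, so that the series in \eqref{scaled_mod1} converges uniformly in $h$ on bounded subintervals of $(0,\infty)$. My plan is to take $\eps=1$ and extract, from the uniform convergence on $(0,1]$, an integer $N$ for which $\sum_{|n|>N}\sm{}<1$ holds for every $h\in(0,1]$. Splitting
\[
\sum_{n\in\Z\setminus\{-1,0\}}\sm{}=\sum_{1\le|n|\le N,\ n\ne -1}\sm{}+\sum_{|n|>N}\sm{},
\]
the tail is uniformly at most $1$; for the finite middle sum, the substitution $v=u/h$ rewrites each summand as $h^{-1}\|\wh f(\cdot/h)\|_{L^2[n,n+1]}$, producing a truncated Wiener-amalgam norm of the dilate $\wh f(\cdot/h)$ on unit blocks lying outside $[-1,1]$. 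Proposition~\ref{prop_dilation_invariance} (applied with $\lambda=1/h\ge 1$) controls the full amalgam norm of $\wh f(\cdot/h)$, while Plancherel gives $\|\wh f(\cdot/h)\|_{L^2[-1,1]}\le h^{1/2}\|f\|_{L^2}$ on the two central unit blocks. The $M^{2,1}_\ast$-hypothesis is used to argue that the $h^{-1/2}$-blowup of $h^{-1}\|\wh f(\cdot/h)\|_{W(L^2,\ell^1)}$ as $h\to 0+$ is carried entirely by the $n=-1,0$ blocks, so that their excision reduces the truncated contribution to a quantity bounded uniformly in $h\in(0,1]$. Combined with $f\in M^{2,1}\subset L^2(\R)$, this yields $\|f\|_{\Mn}<\infty$.

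\textbf{Strictness $M^{2,1}_\ast\subsetneqq\Mn$.} Produce $f\in\Mn\setminus M^{2,1}_\ast$ via a lacunary construction in the style of Proposition~\ref{prop_counterex2}: choose amplitudes and frequencies so that $\mathcal N(f)<\infty$ (keeping $f$ in $\Mn$) but so that the tails of \eqref{scaled_mod1} fail to shrink uniformly on some bounded subinterval of $(0,\infty)$, violating the $M^{2,1}_\ast$-condition.

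\textbf{Main obstacle.} The real difficulty is in the left inclusion. Proposition~\ref{prop_dilation_invariance} by itself yields only an $O(h^{-1/2})$ bound for $h^{-1}\|\wh f(\cdot/h)\|_{W(L^2,\ell^1)}$, which is insufficient to control the $\Mn$-norm. One must show that the divergence as $h\to 0+$ is captured exactly by the two central blocks $n=-1,0$, so that their removal leaves a bounded quantity; this near-cancellation is precisely what the uniform-convergence hypothesis of $M^{2,1}_\ast$ is designed to encode, beyond mere membership in $M^{2,1}$.
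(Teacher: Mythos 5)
Your handling of the middle and right inclusions is fine and matches the paper (which invokes Proposition~\ref{propa1_star}, with strictness from Propositions~\ref{prop_counterex2} and \ref{prop_counter_allgemein}\,(iv)). The genuine gap is in the left inclusion $M^{2,1}_\ast\subseteq\Mn$, and it sits exactly where you flag your ``main obstacle'': you never prove that the $h^{-1/2}$ blow-up is ``carried entirely by the $n=-1,0$ blocks.'' Both the full dilated amalgam norm $h^{-1}\|\wh f(\cdot/h)\|_{2,1}$ and the contribution of the two central blocks (by Plancherel at most $\sqrt2\,h^{-1/2}\|f\|_{L^2(\R)}$) are individually of size $O(h^{-1/2})$; subtracting an upper bound for the latter from an upper bound for the former yields nothing, and no cancellation mechanism is exhibited. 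Saying the $M^{2,1}_\ast$-hypothesis ``is used to argue'' this is a restatement of the conclusion, not an argument. Note moreover that after your tail extraction the problematic terms are the finitely many individual blocks $1\le|n|\le N$, $n\notin\{-1,0\}$: for fixed $n\ge 1$ the single term $\bigl\{h^{-1}\int_{n/h}^{(n+1)/h}|\wh f(v)|^2\,dv\bigr\}^{1/2}$ has an interval escaping to infinity while the weight $h^{-1}$ blows up, and neither dilation invariance of $W(L^2,\ell^1)$ nor Plancherel controls it uniformly in $h\in(0,1]$ (indeed for a general $f\in M^{2,1}$ it is \emph{not} uniformly bounded, by Proposition~\ref{prop_counterex2}).

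The paper closes this gap with a rescaling trick absent from your plan. After choosing $n_0$ so that $\sum_{n\notin\{-n_0,\dots,n_0-1\}}\bigl\{h^{-1}\int_{n/h}^{(n+1)/h}|\wh f(v)|^2\,dv\bigr\}^{1/2}\le\|\wh f\|_{L^2(\R)}$ for \emph{all} $h\in(0,1]$ (your tail step, \eqref{M_new4}), it treats each remaining block $n\in\{1,\dots,n_0-1\}$ by setting $\delta:=n_0h/n$, so that the left endpoint satisfies $n/h=n_0/\delta$. If $\delta\ge1$ then $1/h\le n_0$ and the term is trivially at most $n_0^{1/2}\|\wh f\|_{L^2(\R)}$; if $\delta<1$, the single block at scale $h$ is dominated by $n_0^{1/2}$ times the entire tail $\sum_{\ell\ge n_0}$ at the new scale $\delta$, to which the uniform bound \eqref{M_new4} applies again with $h$ replaced by $\delta$. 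This re-use of the uniform-convergence hypothesis at a second, $n$-dependent scale is the missing idea; without it the finite middle sum is not controlled. Finally, your strictness claim $M^{2,1}_\ast\subsetneqq\Mn$ is only a sketch; the paper supplies and verifies an explicit witness in Proposition~\ref{prop_M-M} (frequencies $n^2$, amplitudes $n^{-3/2}(\ln n)^{-1}$), and some such construction must actually be carried out.
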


\begin{proof}
Let $f\in M^{2,1}_\ast$. By the uniform convergence of the series \zit{scaled_mod1} on bounded subintervals of $(0, \infty)$, there exists an integer $n_0 > 1$ such that

\begin{equation}\label{M_new4}
\sum_{n\in\Z\setminus N_0} \sm{} \le \bignorm{\wh{f}\,}_{\Ltwo}
\end{equation}
for all $h\in(0,1]$, where $N_0:=\{-n_0, \dots , n_0-1\}.$

Now let $n\in\{1, \dots , n_0-1\}$ and define $\delta:= n_0 h/n$. If $\delta \ge 1$, then $1/h\le n_0$ and so
\begin{equation}\label{M_new5}
\sm{} \le n_0^{1/2} \bignorm{\wh{f}\,}_{\Ltwo}.
\end{equation}
If $\delta < 1$, then
\begin{align*}
& \sm{} = \brbigg{\frac{n_0}{n\delta} \int_{n_0/\delta}^{n_0/\delta + 1/h} \bigabs{\wh{f}(v)\,}^2 \dv}^{1/2}\\[2ex]
{}\le{} &  n_0^{1/2} \sum_{\ell=n_0}^\infty\brbigg{\rez{\delta}\int_{\ell/\delta}^{(\ell+1)/\delta} \abs{\wh{f}(v)}^2 \dv}^{1/2}
\le n_0^{1/2}\, \bignorm{\wh{f}\,}_{\Ltwo},
\end{align*}
where we have used \zit{M_new4} with $h$ replaced by $\delta$ in the last step. This is again the bound \zit{M_new5}. By proceeding analogously, we
obtain the bound \zit{M_new5} for $n\in\{-n_0, \dots , -2\}$ as well. Combining \zit{M_new4} with the bounds for $n\in N_0\setminus\{-1 , 0\}$,
we find that
\[
     \mathcal{N}(f) \le \li( 2n_0^{3/2} - 2n_0^{1/2}+1\ri) \bignorm{\wh{f}\,}_{\Ltwo}.
\]
This shows that $f\in \Mn$. Concerning the last two inclusion relations recall Proposition~\ref{propa1_star}. For $M^{2,1}_\ast \ne \Mn$ see the counterexample in the following proposition.
\end{proof}

\begin{proposition}\label{prop_M-M}
The function $f$, given by
\[
   f(t) := \sinc^2\left(\frac{t}{4\pi}\right) \sum_{n=2}^\infty\frac{e^{i(n^2-1/2)t}}{n^{3/2}\,\ln n},
\]
belongs to $\Mn\setminus M^{2,1}_\ast$.
\end{proposition}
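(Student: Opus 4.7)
The plan is to handle the two set-membership claims separately, after a common setup. I will write $\chi_m(t) := \sinc^2(t/(4\pi))\, e^{i(m^2-1/2)t}$ and $c_m := 1/(m^{3/2}\ln m)$, so that $f = \sum_{m\ge 2} c_m \chi_m$. Lemma~\ref{lemma-example} will tell me that $\wh\chi_m$ is supported on $K_m := [m^2-1,m^2]$ with $\|\wh\chi_m\|^2_{L^2(K_m)} = 8\pi/3$, that the $K_m$'s are pairwise disjoint, and hence $|\wh f|^2 = \sum_m c_m^2|\wh\chi_m|^2$; from this $f \in L^2(\R)$ is immediate since $\sum_m c_m^2 < \infty$. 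All further estimates reduce to controlling, for $h \in (0,1]$, the quantities $A_n(h) := \int_{(n/h,(n+1)/h]}|\wh f|^2$ and the sets $M_n(h) := \{m : K_m \cap (n/h,(n+1)/h] \ne \emptyset\}$; counting lattice points in $[\sqrt{n/h}, \sqrt{(n+1)/h}]$ yields $|M_n(h)| \le C\bigl(1/\sqrt{nh}+1\bigr)$, and because $\wh f$ is supported in $[3,\infty)$ only $n \ge 1$ contributes.

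For the inclusion $f \in \Mn$ I would split the series $\sum_{n\ge 1}\sqrt{A_n(h)/h}$ at $n_0(h) := \lfloor 1/(4h)\rfloor$. For $n \le n_0(h)$ (many spectral peaks per $H_n$), the bound $A_n(h) \le (8\pi/3)|M_n(h)|\max_{m\in M_n(h)} c_m^2$ together with $m \ge \sqrt{n/h}$ yields $\sqrt{A_n(h)/h} \le C/(n\ln(n/h))$, after which $\ln(n/h) \ge \ln(1/h)$ reduces the partial sum to a harmonic tail $\lesssim \ln n_0(h)/\ln(1/h) = O(1)$. For $n > n_0(h)$ one has $|M_n(h)| \le 2$, so it pays to reorganize the sum over $m$ instead: each $m \ge M_\ast := \lceil 1/(2h)\rceil$ falls in at most two $H_n$'s in this range, and thus
\[
\sum_{n > n_0(h)}\sqrt{A_n(h)/h} \le C\sqrt{1/h}\,\sum_{m \ge M_\ast} c_m \le C'\sqrt{1/h}\cdot\frac{\sqrt h}{\ln(1/h)} = O\!\left(\tfrac{1}{\ln(1/h)}\right).
\]
Summing the two pieces gives $\mathcal N(f) < \infty$.

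For the exclusion $f \notin M^{2,1}_\ast$ I would test uniform convergence on the bounded subinterval $(0,1]$ of $(0,\infty)$ by taking $h_k := 1/k^2$ and $N_k := \lceil\sqrt k\,\rceil \to \infty$, and showing the tail $\sum_{n \ge N_k}\sqrt{A_n(h_k)/h_k}$ stays bounded below by a positive constant for all large $k$. Running the Part~1 estimate in the reverse direction, at least $\tfrac12|M_n(h_k)| \ge ck/\sqrt n$ indices $m$ have $K_m$ strictly inside $H_n$ for $\sqrt k \le n \le k^2/64$ (the rest of $M_n(h_k)$ consists of at most two boundary straddlers), each contributing its full mass $c_m^2\cdot(8\pi/3)$ to $A_n(h_k)$; using $m \le k\sqrt{n+1}$ gives $\sqrt{A_n(h_k)/h_k} \ge c/(n\ln k)$, so the harmonic sum produces
\[
\sum_{n=N_k}^{\lfloor k^2/64\rfloor}\sqrt{A_n(h_k)/h_k} \ge \frac{c}{\ln k}\cdot\bigl(\tfrac32\ln k - O(1)\bigr) \ge \varepsilon > 0.
\]

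The main obstacle is a tight match between the upper bound in Part~1 and the lower bound in Part~2. The quadratic spacing of the frequency bumps fixes the splitting threshold $n$ of order $1/h$ (equivalently $m$ of order $1/(2h)$, not $1/\sqrt h$), and the exponent $1$ on $\ln m$ in $c_m$ is exactly critical: a smaller exponent, as in Proposition~\ref{prop_counterex4}, would push the Part~1 sum to $\infty$ and place $f$ outside $\Mn$, while a larger exponent would force the Part~2 lower bound to decay with $k$ and leave uniform convergence intact. Keeping track of which $m$'s are boundary straddlers versus interior contributors, and choosing the cutoff $N_k = \lceil\sqrt k\,\rceil$ so that the remaining harmonic sum contributes the full $(3/2)\ln k$ rather than something negligible, are the delicate bookkeeping points.
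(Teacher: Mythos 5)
Your plan is correct and follows essentially the same route as the paper: the same lattice-point count of how many bumps $K_m=[m^2-1,m^2]$ meet a window of length $1/h$ (about $1/(2\sqrt{nh\,})$), the same split at the threshold $n\sim 1/h$ where that count drops to $O(1)$, the same harmonic sums tamed by the logarithm in the coefficients for membership in $\Mn$, and the same lower bound on interior bumps to show the tails of \eqref{scaled_mod1} do not become uniformly small (the paper phrases this as a contradiction with a fixed $n_0$ and $h\to0+$, you with $h_k=k^{-2}$ and a growing cutoff $N_k$, which is logically equivalent). The minor differences in bookkeeping (triangle inequality for the seminorm $\Nh$ versus reorganizing the tail sum over $m$) do not change the substance.
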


\begin{proof}
By Lemma~\ref{lemma-example} the Fourier transform of
\[
   \chi_n(t) := \sinc^2\left(\frac{t}{4\pi}\right) e^{i(n^2-1/2)t}
\]
has support $K_n:=[n^2 -1, n^2]$ and $\|\widehat{\chi}_n\|_{L^2(K_n)} = 2 \sqrt{{2\pi}/3}$.
%
%\[
%     \left\|\widehat{\chi}_n\right\|_{L^2(K_n)}\,=\, 2\, \sqrt{\frac{2\pi}{3}}\,.
%\]
With this, we easily see that $f\in M^{2,1}$.

For proving $f\in \Mn$, we have to show that $\Nh(f)$, defined in \eqref{Nhf}, is bounded for $h\in(0,1]$. In the following considerations, the location of the intervals $K_\mu$ relative to the intervals $I_{n,h}:= [n/h, (n+1)/h]$ is crucial.

Let $\mu$ be an integer such that $\mu^2\in I_{n,h}$, that is,
\begin{equation}\label{proofM-M1}
   \frac{n}{h} \le \mu^2 \le \frac{n+1}{h}\,.
\end{equation}
Since
\begin{equation}\label{proofM-M2}
\sqrt{\frac{n+1}{h}} - \sqrt{\frac{n}{h}}\,<\, \rez{2\sqrt{nh}}\,,
\end{equation}
the number of intervals $K_\mu$ such that $K_\mu\cap I_{n,h}$ is of positive measure is less than $(2\sqrt{nh})^{-1}+1$; in particular, if the
right-hand side of \zit{proofM-M2} is less than $2$, or equivalently,
\bgl{proofM-M3}
nh  > \rez{16}\,,
\egl
then \zit{proofM-M1} can hold for at most one integer $\mu$.

Now, for given $h\in(0,1]$,  let $n_h$ be the smallest integer greater than $1$ that satisfies \zit{proofM-M3} and define
\[
   f_h(t) := \sum_{n=2}^{n_h} \frac{\chi_n(t)}{n^{3/2} \ln n}\,.
\]
Since $\Nh$ is a semi-norm on $M^{2,1}$, we may use the triangular inequality to conclude that
\[
   \Nh(f) \le \Nh(f_h) + \sum_{n=n_h+1}^\infty \rez{n^{3/2} \ln n} \Nh(\chi_n).
\]

Our next aim is to estimate the two terms on the right-hand side. If $K_\mu\cap I_{n,h}$ is of positive measure, then
\[
   \int_{K_\mu\cap I_{n,h}} \absbig{\wh{f}_h(v)}^2 dv\le \frac{\|\wh{\chi}_\mu\|_{L^2(K_\mu)}^2}{\mu^3 \ln^2 \mu} =  \frac{8\pi}{3 \mu^3 \ln^2 \mu}
   \le \frac{8\pi}{3} \pBig{\frac{h}{n}}^{3/2} \rez{\ln^2(\sqrt{n/h})}.
\]
Thus, for $n\in [2, n_h]$, we have
\[
  \int_{I_{n,h}} \bigabs{\wh{f}_h(v)}^2 dv \le \pBig{\rez{2\sqrt{nh}}+1}
                                  \frac{8\pi}{3}\pBig{\frac{h}{n}}^{3/2} \rez{\ln^2(\sqrt{n/h})}.
\]
Noting that the support of $\wh{f}_h$ is contained in $[3, (n_h+1)/h]$ and that $h\le 1/n_h$, we obtain
\[
  \Nh(f_h) \le \sum_{n=2}^{n_h} \brbigg{\rez{h} \int_{I_{n,h}} \absbig{\wh{f}_h(v)}^2 dv}^{1/2}
  \le 4\sqrt{\pi} \,\sum_{n=2}^{n_h} \rez{n \ln(n/h)} \le 4\sqrt{\pi} \,\frac{\ln n_h}{\ln(2/h)}\,.
\]
Since $n_h \le 2+ 1/(16h)$, we find that the right-hand side is bounded for $h\in(0,1]$.

Now we estimate $\Nh(\chi_n)$ for $n>n_h$. Note that the interval $K_n$ may either be a subset of one of the intervals $I_{\ell,h}$ or it may have a non-empty intersection with exactly two consecutive intervals $I_{\ell,h}$ and $I_{\ell+1,h}$. In any case, it can be verified that $\Nh(\chi_n) \le 4\sqrt{{\pi}/(3h)}$.
%
%\[
%  \Nh(\chi_n)\,\le\, 4\,\sqrt{\frac{\pi}/{3h}}\,.
%\]
%%
Thus
\begin{align*}
& {} \quad\sum_{n=n_h+1}^\infty \rez{n^{3/2} \ln n} \Nh(\chi_n) \le 4\,\sqrt{\frac{\pi}{3h}} \sum_{n=n_h+1} \rez{n^{3/2}\ln n}\\[2ex]
& {}\le {} 4\,\sqrt{\frac{\pi}{3h}} \,\rez{\ln n_h} \int_{n_h}^\infty \frac{dx}{x^{3/2}}%\\[2ex]
  \le 32 \,\sqrt{\frac{\pi}{3}} \,\rez{\ln n_h}\,\le\,\frac{32}{\ln 2}\,\sqrt{\frac{\pi}{3}}\,.
\end{align*}
Altogether we have shown that $\sup_{0<h\le 1} \Nh(f)< \infty$, which guarantees that $f\in \Mn.$

Next, assume towards a contradiction that $f\in M^{2,1}_\ast$. Then the series \eqref{scaled_mod1} would converge uniformly for $h\in (0, 1]$. In particular, there would exist an $n_0\in\N$ not depending on $h$ such that
\begin{equation}\label{uniform_convx}
  \sum_{n=n_0}^\infty\brbigg{\rez{h} \int_{I_{n,h}} \bigabs{\widehat{f}(v)}^2\dv}^{1/2}  < \sqrt{\frac{\pi}{3}}
\end{equation}
for all $h\in(0,1]$. We shall show that this is not possible.

Since
\[
   \sqrt{\frac{n+1}{h}} - \sqrt{\frac{n}{h}} >  \rez{2\sqrt{(n+1)h}}\,,
\]
we conclude in view of \zit{proofM-M1} that for the number $m_{n,h}$ of intervals $K_\mu$ which are subsets of $I_{n,h}$ we have
\bgl{proofM-M4}
   m_{n,h}\,\ge\, \rez{2\sqrt{(n+1)h}} - 1
\egl
provided that the right-hand side is greater than $1$. Now let $ 0 < h< \rez{16(n_0+1)}$
%$$ 0 \,<\, h\,<\, \rez{16(n_0+1)}$$
%
and define $N_h$ as the largest integer satisfying $N_h < \rez{16h} -1$.
%
%$$ N_h\,<\, \rez{16h} -1\,.$$
Then for $n\in [n_0, N_h]$, we have
\[
   \int_{I_{n,h}} \bigabs{\wh{f}(v)}^2 dv \ge  m_{n,h}\, \frac{8\pi}{3} \pBig{\frac{h}{n+1}}^{3/2} \rez{\ln^2\pbig{\sqrt{(n+1)/h})}}\,.
\]
Employing \zit{proofM-M4}, we find that
\begin{align*}
\sum_{n=n_0}^{N_h}\brbigg{\rez{h} \int_{I_{n,h}} \absbig{\wh{f}(v)}^2 dv}^{1/2}
                              & {}\ge {} \sqrt{\frac{8\pi}{3}}\, \sum_{n=n_0}^{N_h} \rez{(n+1) \ln\pbig{(n+1)/h)}} \\[2ex]
& {}\ge {} \sqrt{\frac{8\pi}{3}}\, \rez{\ln\pbig{(N_h+1)/h)}}\sum_{n=n_0}^{N_h} \rez{n+1}\,.
\end{align*}
The right-hand side approaches $\sqrt{2\pi/3}$ as $h\to 0+$. This contradicts \eqref{uniform_convx}.
\end{proof}

\section{A short biography of J.\,L.\,B.~Cooper}
Jacob Lionel Bakst Cooper, born on December 27, 1915 in Beaufort West, South Africa, entered the South African College School in Cape
Town in 1924. Upon his father's death his mother Franny (n\'{e}e Bakst) moved there with Lionel and his younger sister,
Gladys, to live with her parents, her father being a rabbinical scholar and her mother widely read. Perhaps his
maternal grandparents laid the basis to his unique personality. In this respect one of the authors (P.\,L.\,B.) in his
address given on the occasion of the funeral service of Lionel on August~14, 1979 said: ``Cooper had a sharp intellect,
always interested in the basic assumptions of the problems studied. He was a scholar in the old sense of the word,
widely read, having brilliant ideas, an inspiration to those who knew him. He did not seek the limelight, and was
somewhat reserved in public. He worked in a quiet way but still with great influence. He radiated authority in every
situation of life, an authority based on deep respect and justice. He had a healthy self-confidence which allowed him
to be composed; there was no rushing about him.'' (See \cite{Butzer_1981} and
\texttt{www-history.mcs.st-and.ac.uk/Biographies/Cooper.html}.).
Alan Hill in his tribute \cite{Hill_1981} writes that: ``\ldots\ when required he could be forcible---even fierce---in
his attitude \dots\ interested \dots\ in seeing that people were treated with decency and justice.''

%In fact, when he heard that a paper by two of the present authors had been rejected by a mathematical journal
%stationed at Oxford in 1975, he wrote to %the chief editor that at the present time Britain had no analyst who could assess
%the value of the paper correctly.

Cooper entered the University of South Africa in 1932. In view of his broad and great abilities he was encouraged to
become a rabbi but studied instead mathematics and physics, and received his B.\,Sc.~in 1935. He was also active in
student politics, and held strong views against racism and Nazism. While still at school he joined the Communist Party.
He told one of his daughters that this was because he felt the major injustice in South Africa was caused by the race
laws, and  only the communists were fighting for a fair society for all races. However he never forgot his Jewish
origins. Lionel won numerous prizes, including one for pure mathematics, one for applied mathematics and one for
history.

In 1935 he came to England as a Rhodes scholar to study at Queen's College, Oxford. As the Cooper family reported,
Lionel found the undergraduate syllabus at Oxford behind that in Cape Town, certainly in analysis which he found
trivial by comparison, and probably in outlook. The emphasis was on geometry, but classical, and not that developed in
the late 19th and early 20th century. Lionel obtained his D.~Phil.\ under Edward Titchmarsh's supervision in 1940 with
the thesis ``Theory and applications of Fourier integrals''. There he was very lucky to have met Kathleen Dixon, who
studied history at Oxford. They were married 1940, and their four children, Barbara (MSc.from Toronto), Frances (PhD.\
from Sussex), David (PhD.\ from Surrey), Deborah (PhD.\ from Swansea) all read mathematics. As to E.\,C.~Titchmarsh,
two of whose proofs play an important role in our derivative-free error estimates, as observed, P.\,L.\,B.\ had the
fortune to attend his invited lecture at the IMC in Amsterdam 1954. Lionel wrote the obituary address of his teacher
Titchmarsh \cite{Cooper_1963}.

During the early years of WW II he worked in the aircraft industry at Bristol before joining Birkbeck College, London,
in 1944 as Lecturer, becoming Reader in 1948. Kathleen recalls Lionel enjoying collaborating in his early years with
Hans Hamburger (1889--1956), who was Lecturer at Southampton's University College from 1941 to 1947 when he left for
Turkey, before returning to Cologne in 1953.

In 1951 Lionel Cooper was appointed Professor of Mathematics and Head of Department at University College, Cardiff,
Wales. There he stayed until 1963. At Cardiff he first had to put his whole energy into reorganizing and reorienting
the Department. The existing courses in Pure Mathematics were decidedly antiquated; applied mathematics had dominated
the scene. He quickly brought about, almost single-handedly, a revolution in pure mathematics, introducing a variety of
forward-looking courses of high quality, with functional analysis given prominence. He brought research to the
forefront by giving advanced courses and seminars, and adding new faculty members. The whole activity, which had
limited the time available for his own research, came to a break in 1954 when he spent a year at Witwatersrand
University. After spending the years 1964/65 at Caltech and 1965--67 as Full Professor at the University of Toronto, he
returned to England in 1967 to become Head of mathematics at the newly constituted Chelsea College of Science and
Technology of the University of London. Kathleen also recalls that Lionel initiated mathematical summer schools for
sixth form pupils from inner city schools in London in order to help them getting the grades for university as well as
a feel for going to university. In Cardiff he had already given courses for school teachers to activate them in the
dissemination of mathematics.
He died on August~8, 1979 in London after a heart operation.

Cooper's research was on a wide range of different topics in mathematical analysis: integral transform theory on the
real line and on groups, functional analysis, operator theory, essentially operators in Hilbert space, differential
equations, and thermodynamics. One of his chief tools was  Fourier transform theory. Details are to be found in David
Edmunds' obituary address \cite{Edmunds_1981} in which David treated Cooper's work in Functional Analysis and Differential
Equations, B.\,Sz.-Nagy that in Operator Theory, Butzer and R.\,J.~Nessel that in Transform Theory, and J.~Serrin that
in Thermodynamics. All in all Cooper wrote at least 50 mathematical papers in various journals throughout the world.

Of interest are two letters of Einstein dated 1949 addressed to Lionel which David forwarded to the authors and which are replies to two letters by Lionel of October and November of 1949. The first is a draft of his article \cite{Cooper_1950}. The matter concerns the famous Einstein-Podolky-Rosen (EPR) incompleteness argument in quantum theory, expounded by the three authors in their paper of 1935. The physicist Max Jammer (1915--2010) devotes some three pages of his excellent book \cite{Jammer_1974} to the arguments presented by Lionel.

His many research students included E.~Benham-Dehkordy (Iran), D.\,E. Davies (Farnborough), B.\,P.~Duggal (Nairobi), Robert Edmund Edwards (Canberra), C.\,E.~Finol (Vene\-zu\-ela), G.\,G.~Gould (Cardiff), Finbarr Holland (Cork), M.\,B.\ Sadiq (Iran), and James D.~Stewart (who sponsored the James Stewart Mathematics Centre at McMaster University). David Eric Edmunds (Brighton), who received his PhD.\ under Rosa Morris at Cardiff in 1955, reported that although he was not one of Lionel's research students, Lionel had by far the strongest influence on him, and it is due to him that he became an analyst, particularly in functional analysis and partial differential equations.

John Fournier (UBC, Vancouver) reported that Lionel, just before assuming his position at Toronto, taught that summer of 1964 a course on Fourier  Analysis in Madison, Wisconsin.  He as well as Charles Dunkl and Alan Schwartz attended that course. He further recalled that Walter Bloom, Garth Gaudry and John Price were students of R.\,E.~Edwards at Australian National University, thus academic grandsons of Lionel. Jim Stewart and John Fournier were classmates as undergraduates at the University of Toronto (1959--1963) and they wrote their amalgam paper \cite{Fournier-Stewart_1985} during Jim's sabbatical at UBC in 1983. Jim also reported that Lionel ``was a lovely, gentle man, full of good ideas, who died far too early'', and that after receiving his doctorate under Cooper at Toronto in 1967 he spent two full years with him as a postdoctorate fellow at Chelsea  College as well as a sabbatical there in 1977.

When at Aachen we planned to write the book on semi-group operators with Hubert Berens (1967) and on Fourier analysis and approximation with Rolf Nessel (1971) we knew we could always turn to Lionel for help when we were stuck. His solution to a specific problem always came quickly and he also  gave us the so important necessary confidence. Lionel came to the first Oberwolfach conference in approximation of 1963 (to which he brought with him Kathleen and his four children, the youngest being four at the time), as well as to the triennial conferences on approximation and operator theory conducted by P.L.B. and B.\,Sz.-Nagy at Oberwolfach from 1968 to 1984.

But mathematics was by no means Cooper's sole interest. While preparing a lecture tour for P.\,L.\,B. to Britain in 1973, he asked him to bring along a certain book of poems by R.\,M.~Rilke---he had become fluent in German through his friendship with German-Jewish refugees already in Cape Town. In fact, he also read or spoke French, Italian, Africaans and Russian. He played tennis and loved to walk, especially in the Lake District. He was a lover of music.

\section*{Acknowledgements}
The authors are  thankful to the referees for their constructive suggestions which were taken into account. They led to a more elegant presentation of the paper. In particular, the recommendation of using the theory of Besov spaces in Sections~\ref{sec_Lipschitz} and \ref{sec_Wiener_amalgam_spaces} resulted in considerably shorter proofs of Theorem~\ref{thm_equiv_new}, Corollary~\ref{cor_Lip_inclusions} and the Proposition~\ref{propa1}, which is essentially known.

%\section{References}
\bibliographystyle{elsarticle-num}
\bibliography{modulation}

\end{document}